\documentclass[12pt]{amsart}
\usepackage[all]{xy}
\usepackage{amssymb}
\usepackage{mathtools}
\usepackage{tikz}
\usepackage[titletoc,toc,title]{appendix}
\setlength{\textwidth}{16truecm}
\setlength{\textheight}{22truecm}
\usepackage{graphics}
\usepackage{float}
\usepackage{graphicx}
\graphicspath{ {images/} }
\usepackage{epstopdf}
\calclayout
\newtheorem{dummy}{anything}[section]
\newtheorem{theorem}[dummy]{Theorem}

\newtheorem{lemma}[dummy]{Lemma}

\newtheorem{corollary}[dummy]{Corollary}

\theoremstyle{definition}

\theoremstyle{remark}
\newtheorem{remark}{Remark}




\def\:{\mkern 1.2mu \colon}
\newcommand{\mmatrix}[4]{\left (\vcenter
{\xymatrix@C-2pc@R-2pc{#1&#2\\#3&#4} } \right )}




\begin{document}
\title[Smooth Manifolds With No Real Projective Structure]
{Smooth Manifolds With Infinite Fundamental Group Admitting No Real Projective Structure}

\subjclass[2010]{57N16, 57S25, 53A20}
\keywords{real projective structure, developing map and holonomy}

\author{Hat\.{I}ce \c{C}oban}

\address{
Middle East Technical University
\newline\indent
Northern Cyprus Campus, Turkey} \email{hacoban@metu.edu.tr}

\date{\today}

\begin{abstract}\noindent  It is an important question whether it is possible to put a geometry on a given manifold or not. It is well known that any simply connected closed manifold admitting a real projective structure must be a sphere. Therefore, any simply connected manifold\hspace{0.2cm} $M$\hspace{0.2cm} which is not a sphere\hspace{0.2cm} $(\dim M \geq 4)$\hspace{0.2cm} does not admit a real projective structure. Cooper and Goldman gave an example of a $3$-dimensional manifold not admitting a real projective structure and this is the first known example. In this article, by generalizing their work we construct a manifold\hspace{0.2cm} $M^n$\hspace{0.2cm} with the infinite fundamental group\hspace{0.2cm} $\mathbb{Z}_2 \ast \mathbb{Z}_2$,\hspace{0.2cm} for any\hspace{0.2cm} $n\geq 4$,\hspace{0.2cm} admitting no real projective structure.
\end{abstract}
\maketitle

\section{INTRODUCTION}
As stated in Felix Klein's Erlanger program of 1872, the classical\hspace{0.2cm} $(X,G)$\hspace{0.2cm} geometry is the study of the properties of a space\hspace{0.2cm} $X$\hspace{0.2cm} which are invariant under a transitive action of a Lie group\hspace{0.2cm} $G$.\hspace{0.2cm} Although this notion was introduced by Felix Klein, the study is initiated by Ehresmann \cite{ehresmann}. The basic problem is to determine when one can put a certain kind geometric structure on a given manifold and classify such structures up to isomorphism. It is well known that every surface admits a real projective structure and the classification of these structures on surfaces is completely done (\cite{MR1414974}, \cite{MR2170138}).

Thurston's work, starting around the middle 1970's, on geometrization of 3-manifolds is a significant contribution of geometric structures in low dimensional topology (\cite{MR1435975}). A three manifold admitting one of Thurston's geometries except the two of them, which are\hspace{0.2cm} $S^2 \times \mathbb{R}$\hspace{0.2cm} and\hspace{0.2cm} $\mathbb{H}\times \mathbb{R}$,\hspace{0.2cm} has a real projective structure determined uniquely by this structure. In the remaining two cases, the three manifold also has a real projective structure if the group acting on the manifold preserves the orientation on the\hspace{0.2cm} $\mathbb{R}$\hspace{0.2cm} direction (\cite{MR3485333}, \cite{MR1473106}). On the other hand, there are some examples admitting a real projective structure, which is not obtained from Thurston's eight geometries by Benoist's work (\cite{MR2218481}).

It was a conjecture that every three manifold admits a real projective structure. However, D. Cooper and W. Goldman showed that the connected sum of two copies of real projective three spaces does not admit a real projective structure (\cite{MR3485333}).

It is well known that any simply connected manifold admitting a real projective structure is a sphere since the developing map (see p. \pageref{devmap}) must be a covering map.
Since there are many examples of simply connected manifolds which are not spheres in dimension bigger than $3$ (e.g. $\mathbb{CP}^n$), there are many higher dimensional manifolds that do not admit a real projective structure.

The aim of this paper is to construct smooth $n$-dimensional manifolds with the infinite fundamental group\hspace{0.2cm} $\mathbb{Z}_2 \ast \mathbb{Z}_2\hspace{0.2cm}  (n\geq 4)$,\hspace{0.2cm} which do not admit a real projective structure by generalizing Cooper and Goldman's work in \cite{MR3485333}.

\vskip 0.2cm
\noindent{ {ACKNOWLEDGEMENTS.}} I am grateful to my advisor Y{\i}ld{\i}ray Ozan for his support, comments and suggestions on this work.

\section{PRELIMINARIES}

First, we define an\hspace{0.2cm} $(X, G)$\hspace{0.2cm} structure on a manifold\hspace{0.2cm} $M$\hspace{0.2cm} following Ehresmann. Let\hspace{0.2cm} $M$\hspace{0.2cm} be a real analytic manifold modelled on\hspace{0.2cm} $X$\hspace{0.2cm} (there is a local isomorphism between\hspace{0.2cm} $X$\hspace{0.2cm} and\hspace{0.2cm} $M$)\hspace{0.2cm} and\hspace{0.2cm} $G$\hspace{0.2cm} be a Lie group acting on\hspace{0.2cm} $X$\hspace{0.2cm} transitively. Then we say that\hspace{0.2cm} $M$\hspace{0.2cm} has an\hspace{0.2cm} $(X, G)$\hspace{0.2cm} structure or\hspace{0.2cm} $M$\hspace{0.2cm} is an\hspace{0.2cm} $(X, G)$\hspace{0.2cm} manifold. Therefore, an\hspace{0.2cm} $(X, G)$\hspace{0.2cm} manifold has a canonical real analytic structure (see \cite{MR957518}, \cite{MR2275923}, \cite{MR2827816}, \cite{MR2249478} for more information about\hspace{0.2cm} $(X, G)$\hspace{0.2cm} structures).

Let\hspace{0.2cm} $M$\hspace{0.2cm} be any\hspace{0.2cm} $(X, G)$\hspace{0.2cm} manifold and\hspace{0.2cm} $\{(U_i, \phi_i)\}$\hspace{0.2cm} be an atlas on\hspace{0.2cm} $M$\hspace{0.2cm} with transition maps
$$\gamma_{ij}: \phi_i (U_i\cap U_j) \longrightarrow \phi_j (U_i \cap U_j)$$ such that
$$\gamma_{ij} \circ \phi_i = \phi_j.$$
Consider an analytic continuation of\hspace{0.2cm} $\phi_1$\hspace{0.2cm} along a curve\hspace{0.2cm} $\alpha$\hspace{0.2cm} in\hspace{0.2cm} $M$\hspace{0.2cm} beginning in\hspace{0.2cm} $U_1$.\hspace{0.2cm} Inductively, on a component of\hspace{0.2cm} $\alpha \cap U_i$,\hspace{0.2cm} the analytic continuation of\hspace{0.2cm} $\phi_1$\hspace{0.2cm} along\hspace{0.2cm} $\alpha$\hspace{0.2cm} is of the form\hspace{0.2cm} $\gamma \circ \phi_1$,\hspace{0.2cm} where\hspace{0.2cm} $\gamma \in G$.\hspace{0.2cm} Therefore,\hspace{0.2cm} $\phi_1$\hspace{0.2cm} can be analytically continued along every path to\hspace{0.2cm} $\bigcup\limits_{i} U_i$\hspace{0.2cm} on\hspace{0.2cm} $M$.\hspace{0.2cm} It follows that there is a global analytic continuation of\hspace{0.2cm} $\phi_1$\hspace{0.2cm} on the universal cover\hspace{0.2cm} $\widetilde{M}$\hspace{0.2cm} of\hspace{0.2cm} $M$.\hspace{0.2cm} Therefore, one can define a map $$dev: \widetilde{M} \longrightarrow X,$$ which is called a developing map\label{devmap}. The map\hspace{0.2cm} $dev$\hspace{0.2cm} is an immersion and is unique up to composition with elements of\hspace{0.2cm} $G$.\hspace{0.2cm} From the uniqueness property of\hspace{0.2cm} $dev$,\hspace{0.2cm} for any covering transformation\hspace{0.2cm} $\Gamma_{\alpha}$\hspace{0.2cm} of\hspace{0.2cm} $\widetilde{M}$\hspace{0.2cm} over\hspace{0.2cm} $M$,\hspace{0.2cm} there is an element\hspace{0.2cm} $g_{\alpha}$\hspace{0.2cm} of\hspace{0.2cm} $G$\hspace{0.2cm} such that $$dev \circ \Gamma_{\alpha} =g_{\alpha} \circ dev.$$
Since $$dev \circ \Gamma_{\alpha} \circ \Gamma_{\beta} =g_{\alpha} \circ dev \circ \Gamma_{\beta} =g_{\alpha} \circ g_{\beta} \circ dev,$$ it follows that the map
\begin{eqnarray*}
hol:\pi_1(M) & \longrightarrow & G, \\
\alpha & \longmapsto & g_{\alpha}
\end{eqnarray*}
is a homomorphism and called the holonomy of the geometric structure on \hspace{0.2cm}$M$.\hspace{0.2cm} For more details, see \cite{MR1435975}.

The pair\hspace{0.2cm} $(dev, hol)$\hspace{0.2cm} is called a developing pair for the geometric structure\hspace{0.2cm} $(X, G)$.\hspace{0.2cm} A real projective structure on\hspace{0.2cm} $M^{n}$\hspace{0.2cm} is then an\hspace{0.2cm} $(\mathbb{RP}^n, PGL(n+1,\mathbb{R}))$\hspace{0.2cm} structure.

More precisely, \hspace{0.2cm}$M$\hspace{0.2cm} admits a real projective structure if there is a maximal atlas on\hspace{0.2cm} $M$\hspace{0.2cm} with projective coordinate changes. A covering\hspace{0.2cm} $\{U_i\}$\hspace{0.2cm} of\hspace{0.2cm} $M$\hspace{0.2cm} with a family of local diffeomorphisms\hspace{0.2cm} $\phi_i :U_i\longrightarrow  V_i \subset \mathbb{RP}^n$\hspace{0.2cm} is called a projective atlas if the local transformations\hspace{0.2cm} $\phi_j \circ {\phi_i}^{-1} : \phi_i (U_i\cap U_j) \longrightarrow \phi_j (U_i \cap U_j)$\hspace{0.2cm} are projective (i.e. they are restrictions of some elements of the group\hspace{0.2cm} $PGL\big( n+1, \mathbb{R}\big)$).

\begin{figure}[h]
\begin{center}
\scalebox{0.5}{\includegraphics{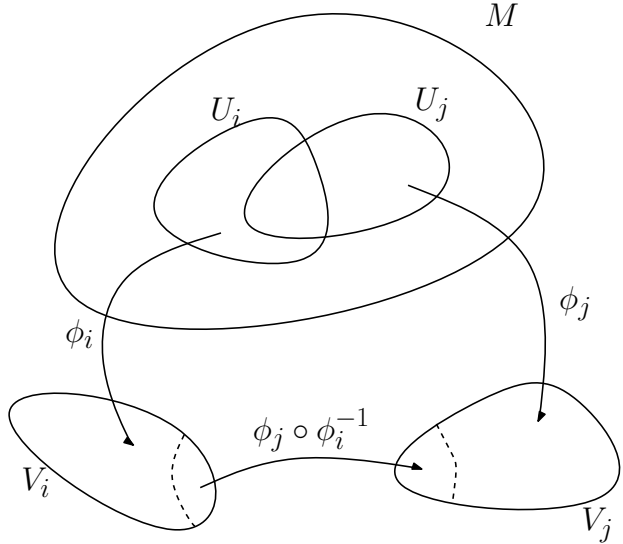}}
\caption{Projective Coordinate Charts}
\end{center}
\end{figure}

\begin{remark}
Let\hspace{0.2cm} $(dev_1, hol_1)$\hspace{0.2cm} and \hspace{0.2cm}$(dev_2, hol_2)$\hspace{0.2cm} be two developing pairs for the same structure. Then  they satisfy the identity\hspace{0.2cm} $dev_2 =g \circ dev_1$,\hspace{0.2cm} for some \hspace{0.2cm}$g \in G$\hspace{0.2cm} and the holonomies are related as\hspace{0.2cm} $hol_2(\beta) =ghol_{1}(\beta)g^{-1}$,\hspace{0.2cm} for any homotopy class\hspace{0.2cm} $[\beta]\in \pi_1 (M)$.
\end{remark}

\begin{theorem}[Ehresmann-Weil-Thurston Principle]\label{EWT}
Let\hspace{0.2cm} $M$\hspace{0.2cm} be an\hspace{0.2cm} $(X, G)$\hspace{0.2cm} manifold with holonomy representation\hspace{0.2cm} $\rho: \pi_1(M)\longrightarrow G$.\hspace{0.2cm} For\hspace{0.2cm} $\rho^{'}$\hspace{0.2cm} sufficiently close to\hspace{0.2cm} $\rho$\hspace{0.2cm} in the space of representations\hspace{0.2cm} $Hom(\pi_1(M), G)$,\hspace{0.2cm} there exists an\hspace{0.2cm} $(X, G)$\hspace{0.2cm} structure on\hspace{0.2cm} $M$\hspace{0.2cm} with holonomy representation\hspace{0.2cm} $\rho^{'}$.
\end{theorem}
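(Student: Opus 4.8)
The plan is to prove this via the standard reformulation of an $(X,G)$-structure as a \emph{flat $X$-bundle together with a transverse section}, which turns the statement into the openness of transversality under a small perturbation of a foliation over a \emph{compact} base; I therefore work under the hypothesis---standard for this principle, and satisfied in every application in this paper---that $M$ is compact, since that is what keeps the data below finite. \emph{Reformulation.} Starting from the developing pair $(dev,\rho)$, form the flat bundle $E_\rho=\widetilde M\times_\rho X\to M$ with its horizontal foliation $\mathcal F_\rho$, whose leaves are $(X,G)$-submanifolds immersing into $X$. The map $dev$ is exactly the $\rho$-equivariant incarnation of a smooth section $\sigma_\rho\colon M\to E_\rho$, and the fact recorded above that $dev$ is an immersion---equivalently a local diffeomorphism, since $\dim X=\dim M=n$---is precisely the statement that $\sigma_\rho$ is everywhere transverse to $\mathcal F_\rho$. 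Conversely, a section of $E_{\rho'}$ transverse to $\mathcal F_{\rho'}$ pulls the leafwise projective charts back to a projective atlas on $M$ with holonomy $\rho'$. So it suffices to produce, for $\rho'$ near $\rho$, a section of $E_{\rho'}$ transverse to $\mathcal F_{\rho'}$.

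The next step is to see that the bundles $E_{\rho'}$ fit into a family that is locally trivial in $\rho'$. I would fix a \emph{finite} good cover $\{U_i\}$ of the compact manifold $M$ (all nonempty multiple intersections contractible) and a spanning tree $T$ in its nerve. Each $E_{\rho'}$ is reconstructed by clutching from transition functions $g_{ij}(\rho')\in G$ which are the identity along $T$ and otherwise equal $\rho'$ of the corresponding loop class; these depend continuously---indeed real-analytically---on $\rho'\in \Hom(\pi_1(M),G)$, automatically satisfy the cocycle condition since $\rho'$ is a homomorphism, and agree with the original data at $\rho'=\rho$. Since the clutching construction and the passage to the horizontal foliation are explicit continuous functions of this \emph{finite} collection of transition maps, one obtains bundle isomorphisms $\Phi_{\rho'}\colon E_{\rho'}\to E_\rho$ depending continuously on $\rho'$ with $\Phi_\rho=\id$, and the pushed-forward foliations $\mathcal G_{\rho'}:=(\Phi_{\rho'})_*\mathcal F_{\rho'}$ form a continuous family on the fixed total space $E:=E_\rho$, with $\mathcal G_{\rho'}\to\mathcal F_\rho$ in $C^1$ (in fact $C^\infty$) as $\rho'\to\rho$.

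Finally, transversality of the \emph{fixed} section $\sigma:=\sigma_\rho$ to a foliation of $E$ is an open condition in the $C^1$ topology, and since $\sigma(M)$ is compact there is a whole $C^1$-neighbourhood of $\mathcal F_\rho$ all of whose foliations are transverse to $\sigma$. Hence for $\rho'$ close enough to $\rho$, $\sigma$ is transverse to $\mathcal G_{\rho'}$, so $\Phi_{\rho'}^{-1}\circ\sigma$ is a section of $E_{\rho'}$ transverse to $\mathcal F_{\rho'}$, which by the reformulation yields the required $(X,G)$-structure on $M$ with holonomy $\rho'$. I expect the main obstacle to be the middle step: making the choice of $\Phi_{\rho'}$ uniform in $\rho'$ and verifying $\mathcal G_{\rho'}\to\mathcal F_\rho$ in $C^1$ rigorously. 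Carrying out the whole construction simplicially over the fixed finite good cover is what makes this manageable, and compactness of $M$ is exactly what keeps that data finite. (A more hands-on alternative would avoid bundles altogether: keep $dev$ unchanged deep inside a compact fundamental domain and, in a collar, interpolate between its old $\rho$-equivariant values and the new $\rho'$-equivariant ones; the interpolation is $C^1$-small when $\rho'$ is near $\rho$, so the perturbed map stays an immersion, and checking consistency across the finitely many overlapping translates of the fundamental domain---where the relations in $\pi_1(M)$ enter---is the analogue of the good-cover bookkeeping above.)
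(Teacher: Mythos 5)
The paper does not prove this statement at all: it is quoted as a known background principle (Ehresmann--Weil--Thurston), so there is no in-paper argument to compare yours against. Your sketch is the standard proof of that principle (Thurston's argument, as written up by Goldman, Canary--Epstein--Green, and Bergeron--Gelander): reinterpret the $(X,G)$-structure as the flat bundle $\widetilde M\times_\rho X$ with its horizontal foliation and a section transverse to it, identify nearby flat bundles over a fixed finite cover, and use openness of transversality along the compact image of the fixed section. The reformulation step and the final transversality step are correct as you state them, and you are right to add the compactness (closedness) hypothesis on $M$, which the paper's statement silently omits but which is needed for the argument and holds in every application in the paper. The one place your write-up is still a sketch rather than a proof is the step you yourself flag: producing the bundle identifications $\Phi_{\rho'}$ continuously in $\rho'$ and verifying that the pushed-forward foliations converge to $\mathcal F_\rho$ in the $C^1$ topology. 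That is the genuine technical content of the theorem; it does go through exactly along the lines you indicate (finite good cover, transition functions depending real-analytically on $\rho'$, leafwise charts varying $C^\infty$ with the clutching data), but as written it is an announced plan rather than a completed verification, so you should either carry out that estimate or cite one of the standard references where it is done.
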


The following well known observation is needed in the proof of Theorem \ref{main thm}.
\begin{lemma} \label{covering}
Let\hspace{0.2cm} $X$\hspace{0.2cm} and\hspace{0.2cm} $Y$\hspace{0.2cm} be Hausdorff spaces and\hspace{0.2cm} $f: X\longrightarrow Y$\hspace{0.2cm} be a local homeomorphism. If\hspace{0.2cm} $X$\hspace{0.2cm} is compact and\hspace{0.2cm} $Y$\hspace{0.2cm} is connected then\hspace{0.2cm} $f$\hspace{0.2cm} is a finite sheeted covering map.
\end{lemma}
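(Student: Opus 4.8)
The plan is to check the two defining properties of a covering map — surjectivity and the existence of evenly covered neighbourhoods — and then to read off finiteness of the fibres from the compactness of $X$ together with the connectedness of $Y$. We may assume $X\neq\emptyset$. Since a local homeomorphism is an open map, $f(X)$ is open in $Y$; since $X$ is compact and $Y$ is Hausdorff, $f(X)$ is also closed in $Y$. Thus $f(X)$ is a nonempty clopen subset of the connected space $Y$, so $f(X)=Y$ and $f$ is surjective.

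Next I would fix $y\in Y$ and examine the fibre $f^{-1}(y)$. It is closed in $X$, hence compact. It is also discrete: if $x\in f^{-1}(y)$, then $f$ restricts to a homeomorphism on some open neighbourhood of $x$, and such a neighbourhood can meet $f^{-1}(y)$ only in $x$. A compact discrete space is finite, so write $f^{-1}(y)=\{x_1,\dots,x_k\}$. For each $j$ choose an open set $U_j\ni x_j$ on which $f$ restricts to a homeomorphism onto the open set $f(U_j)$; since $X$ is Hausdorff we may shrink the $U_j$ so that they are pairwise disjoint.

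The key step is to manufacture an evenly covered neighbourhood of $y$. Put $K=X\setminus\bigcup_{j=1}^{k}U_j$; this set is closed in $X$, hence compact, so $f(K)$ is compact and therefore closed in $Y$, and $y\notin f(K)$ because every preimage of $y$ lies in one of the $U_j$. Hence $V:=\bigl(Y\setminus f(K)\bigr)\cap\bigcap_{j=1}^{k}f(U_j)$ is an open neighbourhood of $y$. If $x\in f^{-1}(V)$ then $f(x)\notin f(K)$, so $x\notin K$ and $x$ lies in exactly one $U_j$; consequently $f^{-1}(V)=\bigsqcup_{j=1}^{k}\bigl(U_j\cap f^{-1}(V)\bigr)$. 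Because $f|_{U_j}\colon U_j\to f(U_j)$ is a homeomorphism and $V\subseteq f(U_j)$, it restricts to a homeomorphism from $U_j\cap f^{-1}(V)$ onto $V$. Thus $V$ is evenly covered and $f$ is a covering map, with exactly $k=|f^{-1}(y)|$ sheets over $y$.

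Finally, the number of sheets of a covering map is locally constant (it is constant on each evenly covered neighbourhood), so by connectedness of $Y$ it is globally constant, and since it equals the finite number $k$ at $y$ it is finite everywhere; hence $f$ is a finite-sheeted covering map. I expect the only genuine subtlety to be the compactness argument that separates $V$ from $f(K)$: this is precisely where compactness of $X$ enters beyond guaranteeing finite fibres, and it is what promotes a local homeomorphism to an honest covering map.
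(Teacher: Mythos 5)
Your proof is correct and complete: surjectivity via openness plus closedness of the image, finiteness and discreteness of fibres from compactness, the construction of the evenly covered neighbourhood $V=\bigl(Y\setminus f(K)\bigr)\cap\bigcap_j f(U_j)$, and the locally constant sheet number are all carried out properly, and this is exactly the standard argument for this classical fact. The paper itself offers no proof of Lemma \ref{covering} (it is cited as a well-known observation), so there is nothing to contrast with; your write-up supplies the expected argument and could serve as the omitted proof verbatim.
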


The following theorem will be needed to study some foliations and the leaf spaces induced by a real projective structure (page \pageref{foli}).

\begin{theorem} (\cite{MR0356087})\label{reebthurston}
Let\hspace{0.2cm} $\mathcal{F}$\hspace{0.2cm} be a codimension one,\hspace{0.2cm} $C^1$,\hspace{0.2cm} transversely oriented foliation of a compact manifold\hspace{0.2cm} $M^n$\hspace{0.2cm} with a compact leaf\hspace{0.2cm} $L$\hspace{0.2cm} such that\hspace{0.2cm} $H^1 (L, \mathbb{R})=0$. Then all leaves of\hspace{0.2cm} $\mathcal{F}$\hspace{0.2cm} are diffeomorphic to\hspace{0.2cm} $L$,\hspace{0.2cm} and the leaves of\hspace{0.2cm} $\mathcal{F}$\hspace{0.2cm} are the fibers of a fibration of\hspace{0.2cm} $M^n$\hspace{0.2cm} over\hspace{0.2cm} $S^1$\hspace{0.2cm} or\hspace{0.2cm} $I$,\hspace{0.2cm} which is an interval. We assume here that if\hspace{0.2cm} $M^n$\hspace{0.2cm} has boundary, then the boundary of\hspace{0.2cm} $M$\hspace{0.2cm} is a union of leaves of\hspace{0.2cm} $\mathcal{F}$.
\end{theorem}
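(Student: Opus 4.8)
The plan is to derive the global fibration from the \emph{local} generalized Reeb stability statement at the leaf $L$ and then propagate it over all of $M^n$ using connectedness.

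First I would establish the local input: a compact leaf $L$ of a $C^1$, transversely oriented, codimension-one foliation with $H^1(L;\mathbb{R})=0$ has trivial holonomy and a saturated product neighborhood. Recall that the holonomy of $L$ is a homomorphism from $\pi_1(L)$ into the group of germs at $0$ of orientation-preserving $C^1$ diffeomorphisms of a transversal $\tau\cong(-\varepsilon,\varepsilon)$. Differentiating at $0$ maps this into $(\mathbb{R}_{>0},\times)\cong(\mathbb{R},+)$, and since $\Hom(\pi_1(L),\mathbb{R})=H^1(L;\mathbb{R})=0$ the linearized holonomy is trivial; Thurston's argument (a fixed-point/averaging estimate exploiting compactness of $L$ and the vanishing of $H^1$) then upgrades this to triviality of the holonomy germs themselves. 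Triviality of holonomy together with compactness of $L$ and transverse orientability yields a foliated chart $L\times(-1,1)$ around $L$ whose leaves are the slices $L\times\{t\}$, each diffeomorphic to $L$. If instead $L\subset\partial M$ one gets a one-sided collar $L\times[0,1)$, and then that component of $\partial M$ is a union of leaves; the interior analysis below is unaffected.

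Next I would globalize. Let $\Omega\subseteq M$ be the union of those leaves $L'$ that are compact, diffeomorphic to $L$, and carry a saturated product neighborhood as above. Then $\Omega$ is saturated, nonempty ($L\in\Omega$), and, by the local statement, open. The crux is that $\Omega$ is also closed. If $p\in\overline{\Omega}$, pick a small transversal $\tau$ through $p$; the points of $\tau$ lying in $\Omega$ accumulate at $p$, so the leaf $L_p$ through $p$ is a $C^1$-limit of compact leaves $L'\cong L$. If the widths of the product neighborhoods of these $L'$ stay bounded below, then some such neighborhood contains $p$, forcing $L_p\cong L$ with a product neighborhood, so $p\in\Omega$; and if the widths degenerate to $0$, a limit-leaf argument in the style of Reeb (using compactness of $M$) shows $L_p$ is nonetheless compact and diffeomorphic to $L$, so that $H^1(L_p;\mathbb{R})=0$ and the local statement applies at $L_p$, again giving $p\in\Omega$. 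Hence $\Omega$ is clopen, and since $M$ is connected $\Omega=M$: every leaf is a compact copy of $L$ with a product neighborhood.

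Finally I would read off the fibration. The leaf space $B:=M/\mathcal{F}$ is, by the product neighborhoods, a Hausdorff $1$-manifold (with boundary arising only from $\partial M$), and the quotient map $\pi\colon M\to B$ is locally the projection $L\times(\text{interval})\to(\text{interval})$, hence a locally trivial fiber bundle with fiber $L$. Since $M$ is compact and $\pi$ is proper with compact fibers, $B$ is a compact connected $1$-manifold, so $B\cong S^1$ or $B\cong I$; in the latter case the two boundary fibers are exactly the components of $\partial M$. This is precisely the asserted conclusion. The main obstacle is the local generalized Reeb stability step — specifically the passage from ``linearized holonomy trivial'' to ``holonomy trivial,'' where the hypothesis $H^1(L;\mathbb{R})=0$ (rather than, say, finiteness of $\pi_1(L)$) is genuinely used — together with the degeneration-of-widths argument needed to close up $\Omega$.
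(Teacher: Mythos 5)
This statement is not proved in the paper at all: it is quoted verbatim, with the citation \cite{MR0356087}, as Thurston's generalization of the Reeb stability theorem, and it is only \emph{used} later (on the page where the foliations of $\mathcal{G}^{n+1-2i}$ by spheres are turned into fibrations over an interval). So there is no in-paper argument to compare yours against; the only fair comparison is with Thurston's original proof, and your outline does follow that standard route: (i) trivial linear holonomy from $H^1(L;\mathbb{R})=\Hom(\pi_1(L),\mathbb{R})=0$, upgraded to trivial germinal holonomy, giving a saturated product neighborhood $L\times(-1,1)$; (ii) an open--closed argument on the saturated set $\Omega$ of leaves diffeomorphic to $L$; (iii) identification of the leaf space with $S^1$ or $I$ and of the quotient map with a locally trivial fibration.

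Judged as a proof rather than a roadmap, however, the two places where the real content of the cited theorem lives are exactly the places you black-box. First, the passage from ``trivial linearized holonomy'' to ``trivial holonomy germs'' \emph{is} Thurston's stability theorem -- the averaging/fixed-point estimate for groups of germs of $C^1$ diffeomorphisms with trivial linear part -- so invoking ``Thurston's argument'' there is circular if the goal is to prove the quoted statement. Second, the closedness of $\Omega$ is not routine: in general a limit of compact leaves need not be compact, and your dichotomy (``widths bounded below'' versus ``widths degenerate, then a limit-leaf argument in the style of Reeb shows $L_p$ is compact and diffeomorphic to $L$'') asserts precisely the conclusion that needs proof in the degenerate case; the standard treatments handle it with a genuine argument (e.g.\ exploiting that holonomy maps of the boundary leaf fix the finite sets $L'\cap\tau$ on the $\Omega$-side, plus a plaque/volume-type bound forcing compactness of the limit leaf) before local stability can be applied at $L_p$. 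Since the paper itself treats the theorem as an external result, citing Thurston for these two steps is perfectly acceptable in context -- but then your write-up should be presented as a reduction to Thurston's local stability theorem, not as an independent proof.
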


\section{THE MAIN THEOREM} \label{themaintheorem}
In this part, for any\hspace{0.2cm} $n \geq 4$,\hspace{0.2cm} we construct smooth $n$-dimensional manifolds with the fundamental group\hspace{0.2cm} $\mathbb{Z}_2 \ast \mathbb{Z}_2$\hspace{0.2cm} admitting no real projective structure.

Let\hspace{0.2cm} $W$\hspace{0.2cm} be an $m$-dimensional \hspace{0.2cm}($m\geq 3$)\hspace{0.2cm} smooth manifold with\hspace{0.2cm} $\pi_1 (W) \cong \mathbb{Z}_2$\hspace{0.2cm} and\hspace{0.2cm} $S^1$\hspace{0.2cm} denote the unit circle in the complex plane\hspace{0.2cm} $\mathbb{C}$.\hspace{0.2cm} Now let\hspace{0.2cm} $M =\widetilde{W} \times S^1 \big/ <\sigma>$,\hspace{0.2cm} where the action is given by
\begin{eqnarray*}
\sigma :  \widetilde{W} \times S^1 & \longrightarrow & \widetilde{W} \times S^1\\
(p, z) & \longmapsto & (\tau(p), \bar{z})
\end{eqnarray*}
so that\hspace{0.2cm} $<\tau> \cong \mathbb{Z}_2$\hspace{0.2cm} is the Deck transformation group of the universal cover\hspace{0.2cm} $\widetilde{W} \longrightarrow W$\hspace{0.2cm} with\hspace{0.2cm} $\widetilde{W}\big/ <\tau> \cong W.$
Now the universal cover of $M$ is as follows:
$$\widetilde{M}= \widetilde{W} \times \mathbb{R} \longrightarrow \widetilde{W} \times S^1 \longrightarrow \widetilde{W} \times S^1 \big/ <\sigma> \cong M.$$
The induced homomorphism from\hspace{0.2cm} $\sigma$\hspace{0.2cm} on the fundamental group is given as follows:
\begin{eqnarray*}
\sigma_{\sharp}  :  \pi_1(\widetilde{W} \times S^1) & \longrightarrow & \pi_1(\widetilde{W} \times S^1)\\
\mathbb{Z} & \longrightarrow & \mathbb{Z}\\
 1 & \longmapsto & -1 .
\end{eqnarray*}

With an easy observation, one can see the fundamental group of\hspace{0.2cm} $M$\hspace{0.2cm} is as follows:
$$\pi_1 (M) =\mathbb{Z}_2 \ast \mathbb{Z}_2 = <a, b\ |\ a^2 =1,\hspace{0.2cm} b^2 =1> .$$
By using the presentation of the fundamental group of\hspace{0.2cm} $M$,\hspace{0.2cm} we have a short exact sequence
$$1\longrightarrow \pi_1(\widetilde{W}\times S^1)\cong \mathbb{Z} \longrightarrow \pi_1(M) \longrightarrow \mathbb{Z}_2 \longrightarrow 1.$$
The action of\hspace{0.2cm} $\mathbb{Z}_2$\hspace{0.2cm} on the normal subgroup\hspace{0.2cm} $\mathbb{Z}$\hspace{0.2cm} of\hspace{0.2cm} $\pi_1(M)$\hspace{0.2cm} is given by multiplication with\hspace{0.2cm} $-1$.\hspace{0.2cm} Therefore, the fundamental group of\hspace{0.2cm} $M$\hspace{0.2cm} has the following presentation:
$$\pi_1(M) \cong \mathbb{Z} \ltimes \mathbb{Z}_2= <c=ab, a\ |\ a^2=1,\hspace{0.2cm} aca= c^{-1}>.$$
Here is the main result of this paper:

\begin{theorem}\label{main thm}
Let\hspace{0.2cm} $W$\hspace{0.2cm} be an $m$-dimensional ($m\geq 3$) smooth closed manifold with\hspace{0.2cm} $\pi_1 (W) \cong \mathbb{Z}_2$\hspace{0.2cm} and\hspace{0.2cm} $M =\widetilde{W} \times S^1 \big/ <\sigma>$\hspace{0.2cm} as above. We assume that:
\begin{enumerate}
\item Either\hspace{0.2cm} $\widetilde{W}$\hspace{0.2cm} is odd dimensional, or\\
\item $\widetilde{W}$\hspace{0.2cm} is even dimensional and it is not the total space of a sphere bundle over a sphere, where both the base and the fiber are the sphere\hspace{0.2cm} $S^{m/2}$.
\end{enumerate}
Then the manifold\hspace{0.2cm} $M$\hspace{0.2cm} does not admit a real projective structure.
\end{theorem}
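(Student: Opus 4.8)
The plan is to assume, for contradiction, that $M$ admits a real projective structure and to extract from it — via the holonomy along the $S^1$-factor and the foliation theorem \ref{reebthurston} — enough information about $\widetilde W$ to contradict hypothesis (i) or (ii).

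\textbf{Set-up.} Fix a developing pair $(dev,hol)$ for the assumed structure, $hol\colon\pi_1(M)\to PGL(n+1,\mathbb R)$ with $n=m+1$ and $dev\colon\widetilde M=\widetilde W\times\mathbb R\to\mathbb{RP}^n$ an immersion. Put $A:=hol(c)$ and $B:=hol(a)$; from $a^{2}=1$ and $aca=c^{-1}$ one gets $B^{2}=\mathrm{id}$ and $BAB^{-1}=A^{-1}$ in $PGL(n+1,\mathbb R)$. In the coordinates $\widetilde M=\widetilde W\times\mathbb R$ the deck transformation $c$ acts by $(p,t)\mapsto(p,t-1)$ and $a$ by $(p,t)\mapsto(\tau p,-t)$, so equivariance of $dev$ reads $dev(p,t-1)=A\,dev(p,t)$ and $dev(\tau p,-t)=B\,dev(p,t)$; restricting to $\widetilde W\times\{0\}$ gives an immersion $dev_0\colon\widetilde W\to\mathbb{RP}^n$ of a closed manifold with $dev_0\circ\tau=B\circ dev_0$. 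As $\widetilde W$ is simply connected, $dev$ lifts through $S^n\to\mathbb{RP}^n$ to an immersion $\widehat{dev}\colon\widetilde W\times\mathbb R\to S^n$, equivariant for a homomorphism into $\widehat{PGL}(n+1,\mathbb R)=GL(n+1,\mathbb R)/\mathbb R_{>0}$; write $\widehat A,\widehat B$ for the images of $c,a$. By Theorem \ref{EWT} we may moreover perturb $hol$ within $\Hom(\pi_1(M),PGL(n+1,\mathbb R))$ — a perturbation preserving the relations above — so as to put $A$ in general position, for instance diagonalizable with simple spectrum.

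\textbf{The main step.} The heart of the argument, which I expect to be the main obstacle, is to show that $\widehat{dev}\colon\widetilde W\times\mathbb R\to\widehat\Omega$ is a covering map (possibly of degree $>1$) onto an open set $\widehat\Omega\subset S^n$; equivalently, that the hypersurfaces $\Sigma_t:=\widehat{dev}(\widetilde W\times\{t\})$ are embedded and fit together as the leaves of a codimension-one foliation of $\widehat\Omega$. Compactness of $\widetilde W$ is the tool: the $\Sigma_t$ are compact with $\Sigma_{t-1}=\widehat A\,\Sigma_t$, so $\widehat\Omega=\bigcup_{k\in\mathbb Z}\widehat A^{k}\bigl(\widehat{dev}(\widetilde W\times[0,1])\bigr)$ is a union of $\widehat A$-translates of one compact set, and one must analyse how consecutive pieces overlap, using the attracting/repelling dynamics of $\widehat A$ on $S^n$ (whose two regimes are interchanged by $\widehat B$, since $BAB^{-1}=A^{-1}$) together with Lemma \ref{covering} applied fibrewise. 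Granting this, $hol(\pi_1(M))$ acts on $\widehat\Omega$ and on its image $\Omega\subset\mathbb{RP}^n$, and the foliation by the $\Sigma_t$ descends; in particular $\langle A\rangle\cong\mathbb Z$ acts with quotient $N:=\widetilde W\times S^1$, which thereby inherits a codimension-one foliation whose leaves are diffeomorphic to $\widetilde W$.

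\textbf{Identifying $\widetilde W$.} Finally one exploits the rigidity of a cocompact cyclic action by a projective transformation. Combining the dynamics of $\widehat A$, the relation $BAB^{-1}=A^{-1}$, and Theorem \ref{reebthurston} applied to the induced foliation of $N$ (whose leaves are closed with $H^1=0$, $\widetilde W$ being simply connected), one is forced into the case where $\widehat A$ is conjugate to a two-block matrix $\mathrm{diag}(\lambda O_1,\mu O_2)$ with $O_i$ orthogonal and $0<|\mu|<|\lambda|$, the domain $\widehat\Omega$ is the invariant region $\cong S^p\times S^q\times\mathbb R$ lying between the attracting and the repelling subsphere of $\widehat A$, and hence $\widetilde W$ is diffeomorphic to $S^p\times S^q$ (or, allowing the $O_i$ to be orientation-reversing, to the total space of some $S^q$-bundle over $S^p$) with $p+q=m$; connectivity and simple connectivity of $\widetilde W$ force $p,q\geq 2$. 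The involution $B$ now enters a second time: conjugating $A$ to $A^{-1}$ it must interchange the two blocks, which have dimensions $p+1$ and $q+1$, so $p=q$, whence $m=2p$ is even and $\widetilde W$ is the total space of an $S^{m/2}$-bundle over $S^{m/2}$. In case (i), $m$ is odd, so this is impossible; in case (ii), $\widetilde W$ is by hypothesis not such a bundle, again impossible. Therefore $M$ admits no real projective structure, as claimed.
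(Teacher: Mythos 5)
There is a genuine gap, and it sits exactly where you flag it yourself. Your ``main step'' --- that $\widehat{dev}\colon\widetilde W\times\mathbb R\to S^n$ is a covering onto an open set $\widehat\Omega$ whose slices $\Sigma_t=\widehat{dev}(\widetilde W\times\{t\})$ are \emph{embedded} leaves of a codimension-one foliation --- is asserted (``granting this'') but never proved, and it is not something one can expect to get cheaply: the developing map of a closed projective manifold is in general only an immersion, with no injectivity on compact slices, and your compactness/translation sketch does not show that consecutive translates $\widehat A^k\widehat{dev}(\widetilde W\times[0,1])$ overlap in a way that produces a covering. The paper never establishes any such statement; instead it passes to the leaf space $\mathcal L$ of the flow of the one-parameter group containing $hol(ab)$ on $\mathbb{RP}^n\setminus Z$ and works with the induced map $h\colon\widetilde W\to\mathcal L$, which is only a local homeomorphism, and essentially all of the work in the paper (Lemma \ref{lemma3} onward, the successive removal of top-dimensional discs, Theorem \ref{reebthurston}, and the non-Hausdorff interval $I^*$ with the involution) is devoted to extracting a contradiction from that weaker statement.

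The second unproved assertion is the claim that the dynamics ``force'' $\widehat A=hol(ab)$ into the two-block form $\mathrm{diag}(\lambda O_1,\mu O_2)$ with $\widehat\Omega\cong S^p\times S^q\times\mathbb R$. A priori, after the normalizations, $hol(ab)$ can be diagonalizable with several distinct eigenvalue pairs $\lambda_i^{\pm1}$ together with a large eigenspace for $1$ (the paper's matrices $C_i$), and ruling out all of those configurations is precisely the content of the paper's leaf-space analysis; your sketch replaces it with one sentence. Your preliminary normalization is also too optimistic: one cannot in general perturb so that $hol(ab)$ has simple spectrum --- when $hol(a)$ has a single $-1$ eigenvalue, $C=hol(a)hol(b)$ automatically fixes an $(n-1)$-dimensional subspace, so $1$ has multiplicity at least $n-1$ --- and even getting real positive eigenvalues (so that there are attracting/repelling dynamics at all) requires the injectivity of $hol$ (Lemma \ref{injective}), a perturbation of the conjugator $P$, and passage to finite covers (Lemma \ref{lemma1}); if $hol(ab)$ were elliptic your dynamical picture would be empty. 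Finally, you never treat the possibility that the developing image meets the fixed-point set of the flow: in the paper's case $hol(a)^2=-\mathrm{Id}$ this splits into the sub-case $\alpha_1\cup\alpha_2=\emptyset$, where the sphere-bundle hypothesis (ii) is used, and the sub-case $\alpha_1\cup\alpha_2\neq\emptyset$, which needs a separate van Kampen argument; your proposal covers, at best, the first of these and only modulo the two unproved steps above.
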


\begin{remark}
Note that if\hspace{0.2cm} $m=2$\hspace{0.2cm} and\hspace{0.2cm} $W$\hspace{0.2cm} is a closed surface with\hspace{0.2cm} $\pi_1(W) \cong \mathbb{Z}_2$\hspace{0.2cm} then\hspace{0.2cm} $W =\mathbb{RP}^2$. Thus,\hspace{0.2cm} $\widetilde{W} = S^2$\hspace{0.2cm} and
$$M=S^2 \times S^1 \big/ <\sigma>\hspace{0.1cm} \cong \hspace{0.1cm} \mathbb{RP}^3 \# \mathbb{RP}^3.$$
In other words, our construction does not yield any example other than $\mathbb{RP}^3 \# \mathbb{RP}^3$ in dimension $3$.

Similarly, if\hspace{0.2cm} $m=3$\hspace{0.2cm} and\hspace{0.2cm} $W$\hspace{0.2cm} is a closed $3$-manifold with\hspace{0.2cm} $\pi_1(W)\cong \mathbb{Z}_2$,\hspace{0.2cm} then by the Elliptization Theorem (cf. Theorem 1.12 in \cite{AFW}),\hspace{0.2cm} $W = \mathbb{RP}^3$. Therefore,\hspace{0.2cm} $\widetilde{W} = S^3$\hspace{0.2cm} and thus
$$M = S^3 \times S^1 \big/ <\sigma>\hspace{0.1cm} \cong \hspace{0.1cm} \mathbb{RP}^4 \# \mathbb{RP}^4.$$
\end{remark}

We follow Cooper and Goldman's work closely. Therefore, we omit the proofs of several results, which are analogous to those in \cite{MR3485333}.

Let us take\hspace{0.2cm} $m+1 =n$\hspace{0.2cm} for simplicity. We prove Theorem \ref{main thm} by contradiction. Therefore, we start with the assumption that\hspace{0.2cm} $M$\hspace{0.2cm} admits a real projective structure. Hence, there exists a developing pair\hspace{0.2cm} $(dev, hol)$\hspace{0.2cm} for\hspace{0.2cm} $M$. Before the proof of Theorem \ref{main thm}, we will prove the following lemma (compare with Lemma 4.1 in \cite{MR3485333}).

\begin{lemma}\label{injective}
The map $hol : \pi_1(M) \longrightarrow PGL(n+1, \mathbb{R})$ is injective.
\end{lemma}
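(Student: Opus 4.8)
The plan is to argue by contradiction: suppose $\ker(hol)$ is nontrivial. Since $\pi_1(M) \cong \mathbb{Z} \ltimes \mathbb{Z}_2$, any nontrivial normal subgroup must meet the infinite cyclic normal subgroup $\langle c \rangle$ (generated by $c = ab$) nontrivially, or else be one of the small torsion pieces; in the free-product description $\mathbb{Z}_2 \ast \mathbb{Z}_2 = \langle a, b \mid a^2 = b^2 = 1\rangle$ the only nontrivial finite-index considerations lead one quickly to the conclusion that $\ker(hol)$ contains some power $c^N$ with $N \geq 1$. First I would make this algebraic reduction precise, using the normal form in $\mathbb{Z}_2 \ast \mathbb{Z}_2$ to show that the only normal subgroups are $\{1\}$, the subgroups $\langle c^N \rangle$ and $\langle c^N, a\rangle$-type subgroups of finite index, and the whole group; in every nontrivial case some nonzero power of $c$ lies in the kernel.

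Next I would pass to the corresponding cover. The subgroup $\langle c \rangle \cong \mathbb{Z}$ corresponds to the cover $\widetilde{W}\times S^1 \to M$, and $c$ acts on $\widetilde{M} = \widetilde{W}\times\mathbb{R}$ as the deck transformation $(p,t)\mapsto(p, t+1)$ (after identifying the $\mathbb{R}$-coordinate appropriately), since $c$ generates the $S^1$-direction translations. If $c^N \in \ker(hol)$, then $dev \circ \Gamma_{c^N} = dev$, so $dev : \widetilde{W}\times\mathbb{R} \to \mathbb{RP}^n$ factors through the quotient $\widetilde{W}\times\mathbb{R}/\langle c^N\rangle \cong \widetilde{W}\times S^1$, which is a closed manifold (here $\widetilde W$ is closed because $W$ is closed and the cover is finite). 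The developing map is an immersion, hence a local homeomorphism, so by Lemma \ref{covering} the induced map $\widetilde{W}\times S^1 \to \mathbb{RP}^n$ is a finite-sheeted covering. But $\mathbb{RP}^n$ is connected with $\pi_1(\mathbb{RP}^n) \cong \mathbb{Z}_2$, so its only connected covers are itself and $S^n$; thus $\widetilde{W}\times S^1$ would be homeomorphic to $\mathbb{RP}^n$ or to $S^n$. This is impossible: $\widetilde{W}\times S^1$ has infinite fundamental group (it contains the $\mathbb{Z}$ from the $S^1$ factor), whereas both $\mathbb{RP}^n$ and $S^n$ have finite fundamental group. This contradiction handles the case where the kernel contains a power of $c$.

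The remaining case is when $\ker(hol)$ is a nontrivial torsion or mixed normal subgroup that does not contain a power of $c$; I would check directly from the structure of $\mathbb{Z}_2 \ast \mathbb{Z}_2$ that no such subgroup exists — every nontrivial normal subgroup of $\mathbb{Z}_2 \ast \mathbb{Z}_2$ is either the whole group or is contained in (indeed is commensurable with) $\langle c\rangle$, because the abelianization-type and Bass--Serre tree arguments force any normal subgroup containing a conjugate of $a$ or $b$ to contain enough elements to be the whole group, while any normal subgroup meeting $\langle c \rangle$ trivially must itself be trivial (a normal subgroup acts on the Bass--Serre tree, and a nontrivial one either contains a hyperbolic element, forcing it to contain a power of $c$, or consists of elliptics, which for a normal subgroup of this group is impossible unless trivial). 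The main obstacle is being careful and complete with this group-theoretic case analysis; the geometric/topological half of the argument is short once the algebra is pinned down. I would therefore organize the writeup so that the normal-subgroup classification is stated cleanly first, and then the covering-space contradiction is applied.
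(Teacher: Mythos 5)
Your proposal is correct and follows essentially the same route as the paper: the paper observes that any proper quotient of the infinite dihedral group is finite, so the cover corresponding to $\ker(hol)$ is compact, descends the developing map to it, and applies Lemma \ref{covering} to obtain a finite covering of $\mathbb{RP}^n$ by a space with infinite fundamental group --- exactly your contradiction, with your cover $\widetilde{W}\times S^1$ (corresponding to $\langle c^N\rangle$) playing the role of the paper's kernel cover, and your algebraic input (every nontrivial normal subgroup contains a power of $c$) equivalent to the paper's cited fact. One small correction to your parenthetical classification: $\langle a, c^2\rangle$ is a normal subgroup of index two not contained in $\langle c\rangle$, but the statement you actually use is true and has a quick proof --- a normal subgroup meeting $\langle c\rangle$ trivially injects into the quotient $\mathbb{Z}_2$, hence has order at most $2$ and would be central, while the center of $\mathbb{Z}_2\ast\mathbb{Z}_2$ is trivial.
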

\begin{proof}
Suppose not. Then the image of the holonomy is a proper quotient of the infinite dihedral group. This implies that it is finite (\cite{MR0274575}). Let\hspace{0.2cm} $H$\hspace{0.2cm} be the kernel of the homomorphism
$$hol: \pi_1 (M) \longrightarrow PGL(n+1, \mathbb{R}),$$
and\hspace{0.2cm} ${\widetilde{M}}^{'} \longrightarrow M$\hspace{0.2cm} be the covering space corresponding to the subgroup\hspace{0.2cm} $H\leq \pi_1(M)$. Hence, the covering map\hspace{0.2cm} $\widetilde{M}^{'}\longrightarrow M$\hspace{0.2cm} is finite, whose total space  is immersed into\hspace{0.2cm} $\mathbb{RP}^n$\hspace{0.2cm} by the map\hspace{0.2cm} $\varphi: \widetilde{M}^{'} \longrightarrow \mathbb{RP}^n$.\hspace{0.2cm}
Here, the developing map descends to\hspace{0.2cm} $\varphi$\hspace{0.2cm} and the map\hspace{0.2cm} $\varphi$\hspace{0.2cm} is a covering map since\hspace{0.2cm} $\widetilde{M}^{'}$\hspace{0.2cm} is compact.
\begin{displaymath}
\xymatrix {
\widetilde{M} \ar[r]^{dev} \ar[d] &
\mathbb{RP}^n  \\
{\widetilde{M}}^{'} \ar[ur]^{\varphi} \ar[d]  \\
 M }
\end{displaymath}
Thus,\hspace{0.2cm} $\widetilde{M}^{'}$\hspace{0.2cm} is a covering space of\hspace{0.2cm} $\mathbb{RP}^n$. On the other hand,\hspace{0.2cm} $\pi_1(\widetilde{M}^{'})$\hspace{0.2cm} is infinite because $\pi_1(M)$\hspace{0.2cm} is infinite and the covering map\hspace{0.2cm} $\widetilde{M}^{'}\longrightarrow M$\hspace{0.2cm} is finite. Therefore, this gives a contradiction since it is also isomorphic to a subgroup of\hspace{0.2cm} $\pi_1(\mathbb{RP}^n) = \mathbb{Z}_2$.
\end{proof}

\section{PROOF OF THE MAIN THEOREM}
\begin{proof}[Proof of Theorem~\ref{main thm}]
We assume that \hspace{0.2cm}$M$\hspace{0.2cm} admits a real projective structure and thus there exists a developing pair\hspace{0.2cm} $(dev, hol)$
$$dev: \widetilde{M} \longrightarrow \mathbb{RP}^n,$$
where\hspace{0.2cm} $\widetilde{M}$\hspace{0.2cm} is the universal cover of\hspace{0.2cm} $M$\hspace{0.2cm} and
$$hol: \pi_1(M) \longrightarrow PGL(n+1, \mathbb{R}),$$
such that for all\hspace{0.2cm} $\widetilde{m} \in \widetilde{M}$\hspace{0.2cm} and\hspace{0.2cm} $g \in \pi_1(M)$,\hspace{0.2cm} we have $$dev(g\cdot \widetilde{m})= hol(g)\cdot dev(\widetilde{m}).$$

Let\hspace{0.2cm} $[A]$\hspace{0.2cm} and\hspace{0.2cm} $[B]$\hspace{0.2cm} be the images of the generators of the fundamental group\hspace{0.2cm} $\pi_1(M)= \mathbb{Z}_2 \ast \mathbb{Z}_2=<a> \ast <b>$\hspace{0.2cm} under the holonomy map, meaning that
\begin{eqnarray*}
hol: \pi_1 (M) & \longrightarrow & PGL(n+1, \mathbb{R}),\\
a & \longmapsto & hol(a)= [A],\\
b & \longmapsto & hol(b)= [B],
\end{eqnarray*}
where\hspace{0.2cm} $A, B \in GL(n+1, \mathbb{R})$.\hspace{0.2cm} Consider the exact sequence below.
$$1 \longrightarrow \mathbb{Z}  \longrightarrow  \pi_1(M) \cong \mathbb{Z}_2 \ast \mathbb{Z}_2  \longrightarrow  \mathbb{Z}_2  \longrightarrow  1.$$
Here, the infinite cyclic normal subgroup is generated by the product\hspace{0.2cm} $c=ab$.\hspace{0.2cm} For the subgroup of\hspace{0.2cm} $\pi_1(M)$\hspace{0.2cm} generated by\hspace{0.2cm} $a$\hspace{0.2cm} and\hspace{0.2cm} $c^n$\hspace{0.2cm} there is an $n$-fold covering space\hspace{0.2cm}  $M^{(n)} \longrightarrow M$\hspace{0.2cm} and the manifold\hspace{0.2cm} $M^{(n)}$\hspace{0.2cm} is diffeomorphic to\hspace{0.2cm} $M$ \label{covrg}.

\begin{remark}
If a manifold\hspace{0.2cm} $M$\hspace{0.2cm} admits a real projective structure, then any covering space of\hspace{0.2cm} $M$\hspace{0.2cm} admits a real projective structure. In other words, if a covering space of\hspace{0.2cm} $M$\hspace{0.2cm} does not admit a real projective structure then\hspace{0.2cm} $M$\hspace{0.2cm} can not admit a real projective structure.
\end{remark}
Let us take\hspace{0.2cm} $C=AB$.\hspace{0.2cm} After passing to the double cover\hspace{0.2cm} $M^{(2)}$\hspace{0.2cm} of\hspace{0.2cm} $M$,\hspace{0.2cm} we can assume that\hspace{0.2cm} $M$\hspace{0.2cm} has a real projective structure, where\hspace{0.2cm} $A$\hspace{0.2cm} and\hspace{0.2cm} $B$\hspace{0.2cm} are conjugate, which will be explained in the proof of the lemma below.

\begin{lemma}\label{lemma1}
It is possible to arrange that\hspace{0.2cm} $C$\hspace{0.2cm} is diagonalizable over\hspace{0.2cm} $\mathbb{R}$\hspace{0.2cm} with positive eigenvalues.
\end{lemma}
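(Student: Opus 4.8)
The plan is to trap $C$ between the algebra of the infinite dihedral relations and the dynamics that the developing map forces on the holonomy. I would first show, purely algebraically, that a suitable matrix lift of $C$ has eigenvalues closed under $\lambda\mapsto\lambda^{-1}$ and that $[C]$ has infinite order; then use the developing map together with the compactness of $\widetilde W$ to exclude every normal form for $C$ except an $\mathbb R$-diagonalisable one; and finally pass to a cover of $M$ diffeomorphic to $M$ to turn the eigenvalues positive.

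For the normalisation, choose matrix lifts $A,B\in GL(n+1,\mathbb R)$ of $hol(a)=[A]$ and $hol(b)=[B]$, put $C=AB$, and rescale so that $\det A,\det B,\det C\in\{\pm1\}$. From $[A]^2=[B]^2=1$ we get $A^2=\lambda_AI$ and $B^2=\lambda_BI$, and the relation $aca^{-1}=c^{-1}$ gives $ACA^{-1}=\mu\,C^{-1}$ for a real scalar $\mu$; comparing determinants and using $\det C=\pm1$ forces $\mu^{\,n+1}=1$, hence $\mu=\pm1$. Passing now to the double cover $M^{(2)}\cong M$ with new free generators $a$ and $ac^{-2}$ replaces $C$ by $C^{-2}$: this kills the scalar, since $A\,C^{-2}A^{-1}=\mu^{-2}C^{2}=C^{2}=(C^{-2})^{-1}$, and, via conjugation by $hol(c)$, it makes $hol(a)$ conjugate to $hol(ac^{-2})$ in $PGL(n+1,\mathbb R)$, using $cac^{-1}=ac^{-2}$ in $\pi_1(M)$. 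So we may henceforth assume that the two involutions are conjugate in $PGL(n+1,\mathbb R)$ and $ACA^{-1}=C^{-1}$, whence the eigenvalue multiset of $C$ in $\mathbb C$ is invariant under inversion. Finally, Lemma~\ref{injective} gives that $[C]$ has infinite order, so $C$ is not a scalar times a finite-order matrix.

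The geometric input enters next. If $C$ were semisimple with every eigenvalue of modulus $1$, then $\langle[C]\rangle$ would be precompact and its closure conjugate into $PO(n+1)$; pulling back the round metric of $\mathbb{RP}^n$ through the developing map of the closed manifold $N:=\widetilde W\times S^1$ (a double cover of $M$ with $\pi_1(N)\cong\mathbb Z$ and holonomy $\langle[C]\rangle$) would exhibit $N$ as a spherical space form, forcing $\pi_1(N)$ finite --- a contradiction (the sub-case of finite holonomy is excluded by Lemma~\ref{covering} and $|\pi_1(\mathbb{RP}^n)|<\infty$, as in Lemma~\ref{injective}). It remains to exclude a nontrivial Jordan block of $C$, and a non-real eigenvalue of $C$ with argument an irrational multiple of $\pi$ in the presence of an eigenvalue of modulus $\neq 1$. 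For these I would work directly with the $\langle c\rangle$-equivariant immersion $dev\colon\widetilde W\times\mathbb R\to\mathbb{RP}^n$, $dev(p,t+1)=[C]\,dev(p,t)$: since $\widetilde W$ is compact, $dev(\widetilde W\times\mathbb R)=\bigcup_{k\in\mathbb Z}[C]^{k}K_0$ is an open subset of $\mathbb{RP}^n$ built from the single compact piece $K_0=dev(\widetilde W\times[0,1])$ by iterating $[C]$, and the compact slices $dev(\widetilde W\times\{t\})$ are immersed codimension-one submanifolds permuted by $[C]$. A nontrivial Jordan block gives an invariant projective subspace on which $[C]$ has polynomial drift, and an irrational-argument eigenvalue gives one on which $\overline{\langle[C]\rangle}$ has a nontrivial compact factor; in each case I would argue that the slices --- compact, of the topological type of the simply connected manifold $\widetilde W$ --- cannot be fitted into such an open set compatibly with $dev$ being a local diffeomorphism of the $n$-manifold $\widetilde W\times\mathbb R$. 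This forces $C$ to be diagonalisable over $\mathbb R$; one further double cover (replacing $C$ by $C^{\pm2}$) then makes every eigenvalue positive.

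The real obstacle is this last exclusion of the parabolic and mixed normal forms for $[C]$. The precompactness argument disposes only of the purely elliptic case, and converting ``invariant projective subspace with recurrent or polynomial-drift dynamics'' into a genuine contradiction with the existence of the equivariant immersion out of the product $\widetilde W\times\mathbb R$ will require a careful local analysis of how the compact slices $dev(\widetilde W\times\{t\})$ and their $[C]$-translates fit together near that subspace.
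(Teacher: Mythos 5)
Your proposal diverges from the paper at the crucial point, and the divergence is exactly where your argument has an acknowledged hole. The paper never tries to show that the holonomy of the \emph{given} structure has $C$ diagonalizable; the word ``arrange'' in the lemma refers to the freedom to \emph{deform} the projective structure, not merely to pass to covers. Concretely, after passing to the double cover on which $A$ and $B$ are conjugate, the paper writes $B=PAP^{-1}$, so $C=APAP^{-1}$, and invokes the Ehresmann--Weil--Thurston principle (Theorem~\ref{EWT}): perturbing $P$ perturbs the holonomy representation, and any sufficiently small perturbation is again the holonomy of a real projective structure on $M$. It then shows, by explicit choices of $P$ and Jacobian computations for the maps $f(P)=APAP^{-1}$ and $g(Q)=(\mathrm{trace}(Q),\mathrm{trace}(Q^2),\dots)$, that the non-admissible $P$ (those for which $C$ has coincident or degenerate eigenvalues) lie in a proper algebraic subset of $GL(n+1,\mathbb R)$, so a generic small perturbation makes $C$ diagonalizable over $\mathbb C$ with eigenvalues in distinct inverse pairs; a further choice makes the complex eigenvalue arguments rational multiples of $\pi$, and then the finite covers $M^{(n)}\cong M$ and one more double cover make all eigenvalues real and positive. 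No dynamical exclusion of parabolic or mixed normal forms is ever needed.

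Your route, by contrast, fixes the holonomy and tries to rule out every non-$\mathbb R$-diagonalizable normal form directly. The algebraic normalization ($A^2=\pm I$, eigenvalues of $C$ closed under inversion, infinite order of $[C]$ via Lemma~\ref{injective}) and the elliptic exclusion via an invariant metric and completeness are fine, and the final positivity step by a double cover matches the paper. But the exclusion of a nontrivial Jordan block, and of a unipotent-or-elliptic part coexisting with an eigenvalue of modulus $\neq 1$, is only sketched (``I would argue that the slices cannot be fitted\dots''), and you yourself flag it as the real obstacle. This is a genuine gap, not a technicality: without the deformation step there is no a priori reason the hypothetical structure's $C$ is semisimple, and turning the recurrence/polynomial-drift heuristics into a contradiction with the equivariant immersion $dev\colon\widetilde W\times\mathbb R\to\mathbb{RP}^n$ is a substantially harder problem than anything in the paper. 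The missing idea is precisely the use of Theorem~\ref{EWT} to replace $C$ by a nearby generic $APAP^{-1}$, which is what lets the paper sidestep those cases entirely.
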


\begin{proof}
First, let us observe that\hspace{0.2cm} $A$\hspace{0.2cm} and\hspace{0.2cm} $B$\hspace{0.2cm} are conjugate on the double cover\hspace{0.2cm} $M^{(2)}$\hspace{0.2cm} of\hspace{0.2cm} $M$:\hspace{0.2cm} Let\hspace{0.2cm} $a^{'}$, $b^{'}$\hspace{0.2cm} and\hspace{0.2cm} $c^{'}$\hspace{0.2cm} be the elements of\hspace{0.2cm} $M^{(2)}$\hspace{0.2cm} such that $$c^{2}=abab=c^{'} =a^{'}b^{'},$$ where \hspace{0.2cm}$a^{'} =a$\hspace{0.2cm} and\hspace{0.2cm} $b^{'} =bab$.\hspace{0.2cm} Therefore, the images of\hspace{0.2cm} $a^{'}$\hspace{0.2cm} and\hspace{0.2cm} $b^{'}$\hspace{0.2cm} are \hspace{0.2cm}$A$\hspace{0.2cm} and\hspace{0.2cm} $BAB$,\hspace{0.2cm}  which are clearly conjugate elements.

Since\hspace{0.2cm} $a^2=1$\hspace{0.2cm} and\hspace{0.2cm} $hol$\hspace{0.2cm} is a homomorphism,\hspace{0.2cm} $[A]^2 \in PGL(n+1, \mathbb{R})$\hspace{0.2cm} is the identity. It follows that after rescaling\hspace{0.2cm} $A$\hspace{0.2cm} we have\hspace{0.2cm} $A^2=\pm Id$,\hspace{0.2cm} thus\hspace{0.2cm} $A$\hspace{0.2cm} is diagonalizable over\hspace{0.2cm} $\mathbb{C}$.\hspace{0.2cm} If\hspace{0.2cm} $A^2= Id$\hspace{0.2cm} then the eigenvalues are\hspace{0.2cm} $\pm 1$.\hspace{0.2cm} Since we are only interested  in\hspace{0.2cm} $[A]$\hspace{0.2cm} we can multiply\hspace{0.2cm} $A$\hspace{0.2cm} with\hspace{0.2cm} $-1$\hspace{0.2cm} and arrange that the eigenvalue\hspace{0.2cm} $-1$\hspace{0.2cm} has multiplicity at most\hspace{0.2cm} $\displaystyle \frac{n+1}{2}$\hspace{0.2cm} (if $n$ is odd) and \hspace{0.2cm}$\displaystyle\frac{n}{2}$\hspace{0.2cm} (if $n$ is even). Otherwise,\hspace{0.2cm} $A^2 = -Id$.\hspace{0.2cm} Depending on the dimension, we have the cases below:

\begin{enumerate}
\item If the dimension\hspace{0.2cm} $n$\hspace{0.2cm} is odd, there exist\hspace{0.2cm} $\displaystyle \frac{n+1}{2} +1$\hspace{0.2cm} possible cases, up to conjugation, for the matrix\hspace{0.2cm} $A$.\hspace{0.2cm}
In the first case\hspace{0.2cm} $A^2 =-Id$\hspace{0.2cm} and the corresponding\hspace{0.2cm} $(n+1) \times (n+1)$\hspace{0.2cm} matrix is
$$\begin{bmatrix}
    0 & 1 & 0 & 0 & \dots  & 0 & 0 \\
    -1 & 0 & 0 & 0 & \dots  & 0 & 0 \\
    0 & 0 & 0 & 1 & \dots & 0 & 0 \\
    0 & 0 & -1 & 0 & \dots & 0 & 0 \\
    \vdots & \vdots & \vdots & \vdots & \ddots & \vdots & \vdots \\
    0 & 0 & 0 & 0 & \dots & 0 & 1 \\
    0 & 0 & 0 & 0 & \dots & -1 & 0
\end{bmatrix}.$$
In the remaining cases\hspace{0.2cm} $A^2 = Id$\hspace{0.2cm} and there are\hspace{0.2cm} $\displaystyle \frac{n+1}{2}$\hspace{0.2cm} possibilities. Along the diagonal there exist only\hspace{0.2cm} $\pm 1$'s\hspace{0.2cm} and all off-diagonal elements are\hspace{0.2cm} $0$.\hspace{0.2cm} The number of\hspace{0.2cm} $-1$\hspace{0.2cm} eigenvalues of each\hspace{0.2cm} $A_i$\hspace{0.2cm} is\hspace{0.2cm} $i$\hspace{0.2cm} and the other eigenvalues are\hspace{0.2cm} $1$,\hspace{0.2cm} where\hspace{0.2cm} $i \in \{1, 2, \dots , \frac{n+1}{2}\}$. \hspace{0.2cm}For example,\hspace{0.2cm} $A_3$\hspace{0.2cm} is as follows:
$$\begin{bmatrix}
    -1 & 0 & 0 & 0 & \dots  & 0 & 0 \\
    0 & -1 & 0 & 0 & \dots  & 0 & 0 \\
    0 & 0 & -1 & 0 & \dots  & 0 & 0 \\
    0 & 0 & 0 & 1 & \dots  & 0 & 0 \\
    \vdots & \vdots & \vdots & \vdots & \ddots  & \vdots & \vdots \\
    0 & 0 & 0 & 0 & \dots & 1 & 0  \\
    0 & 0 & 0 & 0 & \dots & 0 & 1
\end{bmatrix}.$$

\item If the dimension\hspace{0.2cm} $n$\hspace{0.2cm} is even, there exist\hspace{0.2cm} $\displaystyle \frac{n}{2}$\hspace{0.2cm} possible cases for\hspace{0.2cm} $A$.\hspace{0.2cm}
Note that in this case\hspace{0.2cm} $A^2 \neq -Id$\hspace{0.2cm} since\hspace{0.2cm} $A$\hspace{0.2cm} has eigenvalues\hspace{0.2cm} $\pm i$\hspace{0.2cm} and\hspace{0.2cm} $n+1$\hspace{0.2cm} is odd. Hence\hspace{0.2cm} $A^2= Id$\hspace{0.2cm} and thus the possible\hspace{0.2cm} $A_i$\hspace{0.2cm} matrices are similar with the odd dimensional case.
\end{enumerate}
Since\hspace{0.2cm} $A$\hspace{0.2cm} and\hspace{0.2cm} $B$\hspace{0.2cm} are conjugate, there is an element\hspace{0.2cm} $P \in GL(n+1, \mathbb{R})$\hspace{0.2cm} such that\hspace{0.2cm} $B=P A P^{-1}$.\hspace{0.2cm} Then since\hspace{0.2cm} $C=AB$,\hspace{0.2cm} $C =A P A P^{-1}$.\hspace{0.2cm} Changing \hspace{0.2cm}$P$\hspace{0.2cm} is a way to deform the holonomy in the sense of Theorem ~\ref{EWT}.

Define the maps
$$f:GL(n+1, \mathbb{R}) \longrightarrow SL(n+1, \mathbb{R})$$ by
$$f(P)= A P A P^{-1},$$ and
$$g:SL(n+1, \mathbb{R}) \longrightarrow \mathbb{R}^2$$ by
$$g(Q)= (\text{trace}(Q), \text{trace}(Q^2)).$$
Note that these two maps are regular. Choosing an appropriate\hspace{0.2cm} $P$\hspace{0.2cm} depending on\hspace{0.2cm} $A$\hspace{0.2cm} can be done as follows:

\textbf{Case 1:} If\hspace{0.2cm} $A$\hspace{0.2cm} has only one\hspace{0.2cm} $-1$\hspace{0.2cm} eigenvalue then the\hspace{0.2cm} $+1$\hspace{0.2cm} eigenspaces of\hspace{0.2cm} $A$\hspace{0.2cm} and\hspace{0.2cm} $B$\hspace{0.2cm} intersect in a subspace of dimension at least\hspace{0.2cm} $n-1$\hspace{0.2cm} (if the manifold has dimension $n$) for every choice of\hspace{0.2cm} $P$.\hspace{0.2cm} Since\hspace{0.2cm} $C=AB$,\hspace{0.2cm} there is an $(n-1)$-dimensional subspace, on which $C$ is identity and thus $C$ has eigenvalue $1$ with multiplicity at least $(n-1)$. Moreover, by using below\hspace{0.2cm} $P_{t\times t}$\hspace{0.2cm} matrix one can see that trace$(f)$ is nonconstant. If\hspace{0.2cm} $t$\hspace{0.2cm} is odd
$$\text{trace}(f)=t-1+\frac{x+2t-6}{x}$$ and if\hspace{0.2cm} $t$\hspace{0.2cm} is even
$$\text{trace}(f)=\frac{\displaystyle \frac{t^2 -6t+8}{2}x+ \frac{t^3 -10t^2 +28t-32}{4}}{\displaystyle \frac{t-2}{2}x+\displaystyle \frac{t^2 -6t+4}{4}},$$
where $x \in \mathbb{R}$.
Since\hspace{0.2cm} $\text{trace}(f)\neq n+1$,\hspace{0.2cm} there exist two more eigenvalues\hspace{0.2cm} $\lambda$\hspace{0.2cm} and\hspace{0.2cm} $\lambda^{-1}$\hspace{0.2cm} of\hspace{0.2cm} $C$.\hspace{0.2cm} Here we can assume\hspace{0.2cm} $\lambda \neq 1$\hspace{0.2cm} by replacing\hspace{0.2cm} $C$\hspace{0.2cm} to\hspace{0.2cm} $C^2$\hspace{0.2cm} if needed and then clearly\hspace{0.2cm} $\lambda^{-1}\neq 1$.\hspace{0.2cm} It follows that $C$ has eigenvalues $\lambda$, $\lambda^{-1}$ and $1$.

We may take\hspace{0.2cm} $P=(a_{ij})_{t\times t}$,\hspace{0.2cm} $(t=n+1)$\hspace{0.2cm} as follows:


$\bullet$ If\hspace{0.2cm} $t$\hspace{0.2cm} is even, let
\begin{displaymath}
a_{k1} =a_{1k} = \left\{ \begin{array}{ll}
1, & \textrm{ $k$ is odd},\\
0, & \textrm{$k$ is even},
\end{array} \right.
\end{displaymath}

\begin{displaymath}
a_{kt} = \left\{ \begin{array}{ll}
0, & \textrm{ $k$ is odd},\\
1, & \textrm{$k$ is even},
\end{array} \right.
\end{displaymath}

\begin{displaymath}
a_{tk} = \left\{ \begin{array}{ll}
0, & \textrm{ $k$ is odd},\\
1, & \textrm{$k$ is even and $k \neq 2$},\\
x, & \textrm{ $k=2$},
\end{array} \right.
\end{displaymath}
and the core \hspace{0.2cm}$(t-2)\times (t-2)$\hspace{0.2cm} matrix
$$\begin{bmatrix}
     a_{22} & a_{23} & a_{24}  & \dots  & a_{2(t-1)}  \\
     a_{32} & a_{33} & a_{34}  & \dots  & a_{3(t-1)}  \\
     a_{42} & a_{43} & a_{44}  & \dots  & a_{4(t-1)}  \\
    \vdots & \vdots & \vdots  & \ddots  & \vdots   \\
     a_{(t-1)2} & a_{(t-1)3} & a_{(t-1)4}   & \dots  & a_{(t-1)(t-1)}
\end{bmatrix}$$
is the mirror image of the identity matrix
$$\begin{bmatrix}
     0 & 0 & 0  & \dots & 0 & 1  \\
     0 & 0 & 0  & \dots & 1 & 0  \\
     0 & 0 & 0  & \dots  & 0 & 0 \\
    \vdots & \vdots & \vdots  & \ddots  & \vdots & \vdots  \\
     0 & 1 & 0   & \dots  & 0 & 0\\
     1 & 0 & 0   & \dots  & 0  & 0
\end{bmatrix}.$$

$\bullet$ If\hspace{0.2cm} $t$\hspace{0.2cm} is odd, let
\begin{displaymath}
a_{1k} = \left\{ \begin{array}{ll}
0, & \textrm{ $k\geq 3$ is odd or $k=2$},\\
1, & \textrm{$k$ is even and $k \neq 2$ or $k=1$},
\end{array} \right.
\end{displaymath}

\begin{displaymath}
a_{(k+1)1} =a_{kt}= \left\{ \begin{array}{ll}
0, & \textrm{ $k$ is odd},\\
1, & \textrm{$k$ is even},
\end{array} \right.
\end{displaymath}

\begin{displaymath}
a_{tk} = \left\{ \begin{array}{ll}
0, & \textrm{ $k$ is odd and $k\neq 1$},\\
1, & \textrm{$k$ is even and $k \neq 2$},\\
x, & \textrm{$k=2$},
\end{array} \right.
\end{displaymath}
and the core\hspace{0.2cm} $(t-2)\times (t-2)$\hspace{0.2cm} matrix is the identity matrix.

\textbf{Case 2:} $A$\hspace{0.2cm} has two\hspace{0.2cm} $-1$\hspace{0.2cm} eigenvalues. Then we choose\hspace{0.2cm} $P$\hspace{0.2cm} as follows:

$\bullet$ If\hspace{0.2cm} $t$\hspace{0.2cm} is even, let

$k=t/2$\hspace{0.2cm} and\hspace{0.2cm} $a_{k1}=y$.\hspace{0.2cm} If $k \neq t/2$,\hspace{0.2cm} let
\begin{displaymath}
a_{k1} = \left\{ \begin{array}{ll}
1, & \textrm{ $k$ is odd},\\
0, & \textrm{$k$ is even}.
\end{array} \right.
\end{displaymath}
Also let\quad $a_{12}=y+x,\quad a_{1(t-1)}=y$,\quad $a_{1k}=0$\quad for\quad $3\leq k \leq t-2$\quad
$a_{t2}=x,\\ a_{t(t-1)}=y-x$,\quad $a_{tk}=0$\quad for\quad $3\leq k \leq t-2$.
When\quad $k=(t/2)+1$\quad let\quad $a_{kt}=x$. Otherwise, (i.e. $k\neq (t/2)+1$)
\begin{displaymath}
a_{kt} = \left\{ \begin{array}{ll}
0, & \textrm{ $k$ is odd},\\
1, & \textrm{$k$ is even},
\end{array} \right.
\end{displaymath}
and the core matrix\hspace{0.2cm} $(t-2)\times (t-2)$\hspace{0.2cm} is the identity matrix.

In this case,
\begin{equation*}
\begin{split}
\text{trace}(Q) & = t-4 -\frac{2(-1+x)}{1-x-y+yx-y^2 +x^2}-\frac{(-y-x)(-y+x)}{1-x-y+yx-y^2 +x^2}\\
& -\frac{(-1+x)y}{1-x-y+yx-y^2 +x^2} -\frac{2(y-1)}{1-x-y+yx-y^2 +x^2}\\
& -\frac{-x+xy-y^2+x^2}{1-x-y+yx-y^2 +x^2} - \frac{-y^2-y+yx+x^2}{1-x-y+yx-y^2 +x^2}\\
& -\frac{x(y-1)}{1-x-y+yx-y^2 +x^2} -\frac{(y-x)(y+x)}{1-x-y+yx-y^2 +x^2},
\end{split}
\end{equation*}
where\hspace{0.2cm} $Q=APAP^{-1}$.

Consider the composition below.
$$\mathbb{R}^2  \longrightarrow  GL(n+1, \mathbb{R}) \longrightarrow  SL(n+1, \mathbb{R})  \longrightarrow  \mathbb{R}^2 $$
given by
$$(x, y)   \longrightarrow  P  \longrightarrow  f(P)=A P A P^{-1}  \longrightarrow  g(Q)= (\text{trace}(Q), \text{trace} (Q^2)).$$
Then the determinant of the Jacobian matrix of the composition at\hspace{0.2cm} $(2, 3)$\hspace{0.2cm} is\hspace{0.2cm} $-128$.

For the other cases, we refer the reader to Appendix A.

For each case except Case 1, for a generic\hspace{0.2cm} $P$,\hspace{0.2cm} the dimension of\hspace{0.2cm} $+1$\hspace{0.2cm} eigenspace of\hspace{0.2cm} $C$\hspace{0.2cm} is\hspace{0.2cm} $n-2k$,\hspace{0.2cm} where\hspace{0.2cm} $k$\hspace{0.2cm} is the number of\hspace{0.2cm} $-1$\hspace{0.2cm} eigenvalues of\hspace{0.2cm} $A$.\hspace{0.2cm} We call a matrix\hspace{0.2cm} $P$\hspace{0.2cm} admissible if the number of distinct eigenvalues of\hspace{0.2cm} $C$\hspace{0.2cm} is \hspace{0.2cm}$2k+1$,\hspace{0.2cm} which are\hspace{0.2cm} $1$\hspace{0.2cm} and some pairs\hspace{0.2cm} $\lambda_{1}^{\pm 1}, \lambda_{2}^{\pm 1}, ... , \lambda_{k}^{\pm 1}$,\hspace{0.2cm} $\lambda_i \neq \lambda_{j}^{\pm 1}$\hspace{0.2cm} and\hspace{0.2cm} $\lambda_i \neq 1$. Let\hspace{0.2cm} $E$\hspace{0.2cm} denote the set of non-admissible matrices\hspace{0.2cm} $P$\hspace{0.2cm} in\hspace{0.2cm} $GL(n+1, \mathbb{R})$. Below we will show that\hspace{0.2cm} $E$\hspace{0.2cm} is a proper algebraic set in\hspace{0.2cm} $GL(n+1, \mathbb{R})$\hspace{0.2cm} and thus the set of admissible matrices constitutes an open dense subset in\hspace{0.2cm} $GL(n+1, \mathbb{R})$.

Let\hspace{0.2cm} $T$\hspace{0.2cm} be the set of eigenvalues of\hspace{0.2cm} $C$.\hspace{0.2cm} Since\hspace{0.2cm} $C$\hspace{0.2cm} is conjugate to\hspace{0.2cm} $C^{-1}$,\hspace{0.2cm} there is an involution on\hspace{0.2cm} $T$. If we take\hspace{0.2cm} $P \in E$\hspace{0.2cm} then either some \hspace{0.2cm}$\lambda_i =1$\hspace{0.2cm} or\hspace{0.2cm} $\lambda_i = \lambda^{\pm 1}_j$,\hspace{0.2cm} for some\hspace{0.2cm} $i \neq j$.\hspace{0.2cm} First, assume that some\hspace{0.2cm} $\lambda_i =1$.\hspace{0.2cm} Without loss of generality, let\hspace{0.2cm} $\lambda_k =1$.\hspace{0.2cm} Then
$$\textrm{trace}(C) = m+ \sum_{i=1}^{k-1} (\lambda_i + \lambda_{i}^{-1}).$$
In this case, trace$(C)$, trace$(C^2$), ... ,  trace$(C^k)$ satisfy an algebraic relation. On the other hand, if some\hspace{0.2cm} $\lambda_i = \lambda_{j}^{\pm 1}$,\hspace{0.2cm} for some\hspace{0.2cm} $i \neq j$,\hspace{0.2cm} then again without loss of generality, we may assume that \hspace{0.2cm}$\lambda_{k-1}= \lambda_k$.\hspace{0.2cm} Then
$$\textrm{trace}(C) = m+ \sum_{i=1}^{k-1} a_i (\lambda_i + \lambda_{i}^{-1}),$$ where\hspace{0.2cm} $a_i =1$,\hspace{0.2cm} for\hspace{0.2cm} $1 \leq i \leq k-2$\hspace{0.2cm} and\hspace{0.2cm} $a_{k-1}=2$.\hspace{0.2cm} Hence, again trace$(C)$, trace$(C^2)$, ... , trace$(C^k)$ satisfy an algebraic relation, where\hspace{0.2cm} $1 \leq k \leq (n+1)/2$\hspace{0.2cm} or\hspace{0.2cm} $1 \leq k \leq n/2$\hspace{0.2cm} and\hspace{0.2cm} $\dim\hspace{0.1cm}[g\circ f(E)]= k-1$.\hspace{0.2cm} For example,  if all eigenvalues are\hspace{0.2cm} $\lambda_1$\hspace{0.2cm} and\hspace{0.2cm} $\lambda^{-1}_1$\hspace{0.2cm} then
$$\textrm{trace}(C)= \frac{n+1}{2}(\lambda_1 + \lambda_{1}^{-1}) \textrm{ and  trace}(C^2)= \frac{n+1}{2}(\lambda_{1}^2 + \lambda_{1}^{-2}).$$
Now, trace$(C)$ and trace$(C^2)$ satisfy the following algebraic relation
$$(\textrm{trace}(C))^2 = ((n+1)/2)(\textrm{trace}(C^2))+(n+1)^2/2.$$
Since the determinant of the Jacobian of\hspace{0.2cm} $g \circ f$\hspace{0.2cm} is nonzero at some points, for example\hspace{0.2cm} $(2, 3, ... , k+1)$,\hspace{0.2cm} the image of the map\hspace{0.2cm} $g\circ f$\hspace{0.2cm} contains an open set and thus\hspace{0.2cm} $E$\hspace{0.2cm} is a closed proper subset of\hspace{0.2cm} $GL(n+1, \mathbb{R})$.\hspace{0.2cm} It follows that\hspace{0.2cm} $GL(n+1, \mathbb{R}) \setminus E$\hspace{0.2cm} is open and dense in the Euclidean topology. Therefore, it is possible to perturb\hspace{0.2cm} $P$\hspace{0.2cm} slightly and thus the map\hspace{0.2cm} $hol$\hspace{0.2cm} so that the matrix\hspace{0.2cm} $C$\hspace{0.2cm} is diagonalizable over complex numbers.

With a proper choice of\hspace{0.2cm} $P$,\hspace{0.2cm} it can be arranged that the arguments of complex eigenvalues\hspace{0.2cm} $\lambda_i$\hspace{0.2cm} of\hspace{0.2cm} $C$\hspace{0.2cm} are rational multiples of\hspace{0.2cm} $\pi$.\hspace{0.2cm} Moreover, passing to a finite covering space\hspace{0.2cm} $M^{(n)}$\hspace{0.2cm} of\hspace{0.2cm} $M$\hspace{0.2cm} (see page \pageref{covrg}), we can suppose all eigenvalues of\hspace{0.2cm} $C$\hspace{0.2cm} are real and by passing to a further double cover these eigenvalues can be assumed to be positive.

This concludes the proof of Lemma \ref{lemma1}.
\end{proof}

Hence, we have proved the following lemma for the case\hspace{0.2cm} $A^2 =Id$.

\begin{lemma}
We can arrange that\hspace{0.2cm} $C_i$\hspace{0.2cm} corresponding to\hspace{0.2cm} $A_i$\hspace{0.2cm} so that its eigenvalues are\hspace{0.2cm} $\{\lambda_{i}^{\pm}\}$\hspace{0.2cm} such that\hspace{0.2cm} $\lambda_i > \lambda_{i-1}> ... >\lambda_1 >1$,\hspace{0.2cm} where\hspace{0.2cm} $i \in \{1, ... , \lfloor(n+1)/2\rfloor \}$\hspace{0.2cm} and the remaining eigenvalues of\hspace{0.2cm} $C_i$\hspace{0.2cm} are all\hspace{0.2cm} $1$.
\end{lemma}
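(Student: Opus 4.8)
The plan is to obtain this lemma as a bookkeeping consequence of Lemma~\ref{lemma1} and of the analysis of admissible matrices carried out inside its proof, now specialized to the case $A^2 = Id$; that is, $A = A_i$ for some $i \in \{1, \dots, \lfloor (n+1)/2 \rfloor\}$, where $A_i$ has exactly $i$ eigenvalues equal to $-1$ and all other eigenvalues equal to $1$.

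First I would fix an admissible matrix $P \in GL(n+1, \mathbb{R})$, which exists because such $P$ form an open dense subset by the argument already given for Lemma~\ref{lemma1}, and set $B_i = P A_i P^{-1}$ and $C_i = A_i B_i$. Two facts are then available without further work. On the intersection of the $(+1)$-eigenspaces of $A_i$ and $B_i$ --- which for generic $P$ has dimension $n+1-2i$ --- both matrices act as the identity, hence so does $C_i = A_i B_i$; this accounts for the eigenvalue $1$ of $C_i$ with multiplicity $n+1-2i$. Moreover, admissibility of $P$ says precisely that the remaining $2i$ eigenvalues of $C_i$ split into $i$ genuine reciprocal pairs $\mu_j, \mu_j^{-1}$ with $\mu_j \neq 1$ and $\mu_j \neq \mu_\ell^{\pm 1}$ whenever $j \neq \ell$.

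Next I would invoke Lemma~\ref{lemma1} itself: after a small perturbation of $P$ --- permissible by the Ehresmann-Weil-Thurston principle, Theorem~\ref{EWT} --- and after replacing $M$ by suitable finite covers $M^{(N)}$ and then by a further double cover, we may assume in addition that $C_i$ is diagonalizable over $\mathbb{R}$ with strictly positive spectrum. In each pair $\{\mu_j, \mu_j^{-1}\}$ the two members are now positive, distinct, and have product $1$, so exactly one of them exceeds $1$; I would denote it $\lambda_j$. Since $\mu_j \neq \mu_\ell^{\pm 1}$ for $j \neq \ell$, the numbers $\lambda_1, \dots, \lambda_i$ are pairwise distinct, so after permuting the indices we arrange $1 < \lambda_1 < \lambda_2 < \cdots < \lambda_i$; and by construction the spectrum of $C_i$ consists exactly of $\lambda_1^{\pm 1}, \dots, \lambda_i^{\pm 1}$ together with $1$ of multiplicity $n+1-2i$, which is the assertion of the lemma.

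I do not expect a real obstacle here, since everything substantial --- the explicit admissible matrices $P$, the Jacobian computations showing the relevant trace maps are nonconstant, and the passage to covers --- has already been carried out in the proof of Lemma~\ref{lemma1}. The one point deserving a word of care is that the covering steps inherited from Lemma~\ref{lemma1}, which raise $C_i$ to a fixed power $N$ in order to make the spectrum first real and then positive, must not merge two of the pairs nor collapse one of them onto $1$. This is harmless because $t \mapsto t^N$ is injective on $(0, \infty)$, provided the admissible $P$ is also chosen to avoid the countably many additional algebraic coincidences --- an eigenvalue, or the ratio or product of two eigenvalues, being a root of unity --- that powering could otherwise create; such $P$ are still generic.
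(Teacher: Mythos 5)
Your proposal is correct and follows essentially the same route as the paper, which states this lemma as a direct consequence of the proof of Lemma~\ref{lemma1} specialized to the case $A^2=Id$ (admissible $P$ giving $i$ distinct reciprocal pairs, the common $+1$-eigenspace of $A_i$ and $B_i$ supplying the eigenvalue $1$ of multiplicity $n+1-2i$, and the perturbation/covering steps making the spectrum real and positive). Your extra remark that passing to covers (raising $C_i$ to a power) must not merge pairs or collapse them onto $1$, handled by genericity of $P$, is a sensible refinement of a point the paper leaves implicit.
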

When\hspace{0.2cm} $A^2 = -Id$\hspace{0.2cm} the corresponding matrix\hspace{0.2cm} $C$\hspace{0.2cm} has eigenvalues\hspace{0.2cm} $\lambda_1$\hspace{0.2cm} and\hspace{0.2cm} $\lambda^{-1}_1$\hspace{0.2cm} with multiplicities both equal to\hspace{0.2cm} $\displaystyle \frac{n+1}{2}$.

When\hspace{0.2cm} $A^2 =Id$,\hspace{0.2cm} the possible\hspace{0.2cm} $C_i$\hspace{0.2cm} matrices can be arranged depending on the number of\hspace{0.2cm} $-1$\hspace{0.2cm} eigenvalues of\hspace{0.2cm} $A_i$.\hspace{0.2cm} Namely, the number of\hspace{0.2cm} $-1$\hspace{0.2cm} eigenvalues of\hspace{0.2cm} $A$\hspace{0.2cm} determine the number of different\hspace{0.2cm} $\lambda_i$\hspace{0.2cm} eigenvalues of\hspace{0.2cm} $C$.\hspace{0.2cm} For example, if the number of\hspace{0.2cm} $-1$\hspace{0.2cm} eigenvalues of\hspace{0.2cm} $A$\hspace{0.2cm} is\hspace{0.2cm} $2$\hspace{0.2cm} then the corresponding\hspace{0.2cm} $C$\hspace{0.2cm} is
$$\begin{bmatrix}
    \lambda_1 & 0 & 0 & 0 & 0 & \dots  & 0 & 0 \\
    0 & \lambda_2 & 0 & 0 & 0 & \dots  & 0 & 0 \\
    0 & 0 & \lambda^{-1}_1 & 0 & 0 & \dots  & 0 & 0 \\
    0 & 0 & 0 & \lambda^{-1}_2 & 0 & \dots  & 0 & 0 \\
    0 & 0 & 0 & 0 & 1 & \dots & 0 & 0 \\
    \vdots & \vdots & \vdots & \vdots & \vdots & \ddots  & \vdots & \vdots \\
    0 & 0 & 0 & 0 & 0 & \dots & 1 & 0  \\
    0 & 0 & 0 & 0 & 0 & \dots & 0 & 1
\end{bmatrix}.$$
For each matrix\hspace{0.2cm} $C_i$,\hspace{0.2cm} the multiplicity of the eigenvalue\hspace{0.2cm} $\lambda$\hspace{0.2cm} is the same as the multiplicity of\hspace{0.2cm} $\lambda^{-1}$\hspace{0.2cm} since\hspace{0.2cm} $C_{i}$\hspace{0.2cm} is conjugate to\hspace{0.2cm} $C_{i}^{-1}$.

There is a $1$-parameter diagonal subgroup\hspace{0.2cm} $\rho: \mathbb{R} \longrightarrow G\subset PGL(n+1, \mathbb{R})$\hspace{0.2cm} such that\hspace{0.2cm} $\rho(1) =[C]$.\hspace{0.2cm}
The group\hspace{0.2cm} $G$\hspace{0.2cm} is identified with the unique one parameter subgroup containing the cyclic group\hspace{0.2cm} $K$,\hspace{0.2cm} which is generated by\hspace{0.2cm} $C$\hspace{0.2cm} and thus each element of\hspace{0.2cm} $G$\hspace{0.2cm} has real eigenvalues.\hspace{0.2cm} $K$\hspace{0.2cm} is normal in\hspace{0.2cm} $hol(\pi_1 (M))$,\hspace{0.2cm} so\hspace{0.2cm} $G$\hspace{0.2cm} is normalized by\hspace{0.2cm} $hol(\pi_1(M))$.

Let\hspace{0.2cm} $N \longrightarrow M$\hspace{0.2cm} be the double cover corresponding to the subgroup of\hspace{0.2cm} $\pi_1(M)$\hspace{0.2cm} generated by\hspace{0.2cm} $c=ab$.\hspace{0.2cm} Clearly,\hspace{0.2cm} $N \cong \widetilde{W} \times S^1$\hspace{0.2cm} (see Section \ref{themaintheorem}). Let\hspace{0.2cm} $\pi: \widetilde{N} \longrightarrow N$\hspace{0.2cm} be the universal cover of\hspace{0.2cm} $N$.\hspace{0.2cm} Then\hspace{0.2cm} $N$\hspace{0.2cm} has a real projective structure inheriting from\hspace{0.2cm} $M$\hspace{0.2cm} with the same developing map\hspace{0.2cm} $dev_M =dev_N$.\hspace{0.2cm} The image of the holonomy for this projective structure on\hspace{0.2cm} $N$\hspace{0.2cm} is generated by\hspace{0.2cm} $[C]$.

Let\hspace{0.2cm} $z\in gl(n+1, \mathbb{R})$\hspace{0.2cm} be an infinitesimal generator of\hspace{0.2cm} $G$\hspace{0.2cm} such that\hspace{0.2cm} $G= exp(\mathbb{R} \cdot z)$.

Consider the flow $$\Phi :\mathbb{RP}^n \times \mathbb{R} \longrightarrow \mathbb{RP}^n$$ on\hspace{0.2cm} $\mathbb{RP}^n$\hspace{0.2cm} generated by\hspace{0.2cm} $G$,\hspace{0.2cm} which is given by $$\Phi(x, t) = exp(tz)\cdot x,$$ for\hspace{0.2cm} $x\in \mathbb{RP}^n,\hspace{0.2cm} t\in \mathbb{R}$.

Let\hspace{0.2cm} $V$\hspace{0.2cm} be the vector field on\hspace{0.2cm} $\mathbb{RP}^n$,\hspace{0.2cm} the velocity of this flow. Since the vector field is preserved by this flow,\hspace{0.2cm} $V$\hspace{0.2cm} is also preserved by\hspace{0.2cm} $hol(\pi_1 (N))$.\hspace{0.2cm} Hence,\hspace{0.2cm} $V$\hspace{0.2cm} pulls back via the developing map to a vector field\hspace{0.2cm} $dev^{-1}(V)= \widetilde{v}$\hspace{0.2cm} on\hspace{0.2cm} $\widetilde{N}$\hspace{0.2cm} and it is invariant under covering transformations thus covers a vector field\hspace{0.2cm} $\pi(dev^{-1}(V))=v$\hspace{0.2cm} of\hspace{0.2cm} $N$.

In the paper \cite{MR3485333}, the following two lemmas are proved for the $3$-dimensional case (Lemma 4.5 and Lemma 4.6). Moreover, the results are still valid in our case, for any dimension\hspace{0.2cm} $n \geq 4$\hspace{0.2cm} and for any\hspace{0.2cm} $C_i$\hspace{0.2cm} such that\hspace{0.2cm} $1 \leq i \leq \displaystyle \frac{n+1}{2}$\hspace{0.2cm} ($n$ is odd) or\hspace{0.2cm} $1 \leq i \leq \displaystyle \frac{n}{2}$\hspace{0.2cm} ($n$ is even).

\begin{lemma}\label{no source}
$dev(\widetilde{N})$\hspace{0.2cm} does not contain any source or sink.
\end{lemma}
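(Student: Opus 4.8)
The proof is by contradiction, following Lemma~4.5 of \cite{MR3485333}. Suppose $dev(\widetilde{N})$ contains a fixed point $p$ of $\Phi$ that is a source or a sink. Replacing $C$ by $C^{-1}$ reverses $\Phi$ and interchanges sources with sinks, so we may assume $p=p^{+}$ is a sink: the line spanned by an eigenvector of $C$ for its top eigenvalue $\mu_{1}>1$. Then $p^{+}$ is a hyperbolic attracting fixed point of $\Phi$ (the linearization of $V$ there has the eigenvalues $\log\mu_{j}-\log\mu_{1}<0$), and its basin of attraction is the affine chart $\mathcal{A}^{+}=\mathbb{RP}^n\setminus H$, where $H\cong\mathbb{RP}^{n-1}$ is the projectivization of the span of the remaining eigenvectors of $C$; note that $\mathcal{A}^{+}$ is $\Phi$-invariant, since $\Phi$ only rescales homogeneous coordinates.

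First I would promote the intertwining relation to all times. Since $N\cong\widetilde{W}\times S^1$ is closed, the vector field $v$ on $N$ is complete; lifting its flow through $\pi\:\widetilde{N}\to N$ gives a complete flow $\widetilde{\Phi}$ of $\widetilde{v}$, and $dev\circ\widetilde{\Phi}_{t}=\Phi_{t}\circ dev$ holds for all $t\in\mathbb{R}$ because $dev$ is a local diffeomorphism carrying $\widetilde{v}$ to $V$. Hence $dev(\widetilde{N})$ is open and $\Phi$-invariant, and as it contains the attractor $p^{+}$ it must contain the whole basin: $\mathcal{A}^{+}\subseteq dev(\widetilde{N})$. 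Now pick $\widetilde{q}$ with $dev(\widetilde{q})=p^{+}$; it is a hyperbolic sink of $\widetilde{\Phi}$, and I claim $dev$ carries its basin $B$ homeomorphically onto $\mathcal{A}^{+}$. It is injective on $B$: if $dev(\widetilde{y}_{1})=dev(\widetilde{y}_{2})$ with $\widetilde{y}_{i}\in B$, then $\widetilde{\Phi}_{t}\widetilde{y}_{1},\widetilde{\Phi}_{t}\widetilde{y}_{2}$ both converge to $\widetilde{q}$ and have equal $dev$-images, hence coincide for large $t$ (where $dev$ is injective near $\widetilde{q}$), so $\widetilde{y}_{1}=\widetilde{y}_{2}$. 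And $dev(B)$ is open, $\Phi$-invariant, contained in $\mathcal{A}^{+}$ and contains $p^{+}$, hence equals $\mathcal{A}^{+}$. So $\widetilde{N}$ contains an open set $B\cong\mathbb{R}^n$ that $dev$ identifies with the affine part of $\mathbb{RP}^n$.

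The remaining step, and the main obstacle, is to turn this into a contradiction. Using the completeness of $\widetilde{\Phi}$ once more to lift paths, one shows that $dev$ is a covering map onto its image; then $dev(\widetilde{N})$, being open, $\Phi$-invariant and containing $\mathcal{A}^{+}$, is either $\mathbb{RP}^n$ or $\mathbb{RP}^n$ with a $\Phi$-invariant projective subspace $L$ removed (this is seen by iterating the basin argument down the flag of invariant subspaces inside $H$). Since $\widetilde{N}$ is simply connected, $dev$ is then the universal covering of its image. If $dev(\widetilde{N})=\mathbb{RP}^n$ this forces $\widetilde{N}\cong S^n$, so $\widetilde{N}$ is compact, absurd. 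Otherwise $dev(\widetilde{N})=\mathbb{RP}^n\setminus L$ is homotopy equivalent to $\mathbb{RP}^{n-1-\dim L}$ (project from $L$), so $\widetilde{N}\simeq S^{n-1-\dim L}$ (up to the degenerate low cases); but $\widetilde{N}=\widetilde{W}\times\mathbb{R}$ is homotopy equivalent to the closed orientable $m$-manifold $\widetilde{W}$ ($m=n-1$), which has $H_{m}\cong\mathbb{Z}$ — this already contradicts $\widetilde{N}\simeq S^{n-1-\dim L}$ unless $\dim L=0$, and in the remaining case $dev$ is the $2$-to-$1$ universal cover $\widetilde{N}\to\mathbb{RP}^n\setminus\{p^{-}\}$, where the generator $g_{0}$ of the (free) deck group $\pi_{1}(N)\cong\mathbb{Z}$ must act without fixed points on the two points of $dev^{-1}(p^{+})$ (which $g_{0}$ preserves because $C$ fixes $p^{+}$) while its square must as well — impossible for a free action on a $2$-element set. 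This contradiction proves the lemma. The hard part is precisely the covering-map claim: one must rule out lifts of paths escaping to infinity in the non-compact $\widetilde{N}$, which is where the compactness of $N$ — through the completeness of the flow $\widetilde{\Phi}$ — is essential.
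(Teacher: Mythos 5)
Your opening two-thirds is correct, and it is essentially the argument this paper relies on (the paper omits the proof, deferring to Lemma~4.5 of Cooper--Goldman): completeness of $v$ on the compact manifold $N$ lifts to a complete flow $\widetilde{\Phi}$ on $\widetilde{N}$ conjugated to $\Phi$ by $dev$; hence $dev(\widetilde{N})$ is open and $\Phi$-invariant, so containing the sink forces it to contain the whole basin $\mathcal{A}^{+}$; and $dev$ restricted to the basin $B$ of a lift $\widetilde{q}$ of $p^{+}$ is a homeomorphism onto $\mathcal{A}^{+}$ (injectivity by flowing forward into a neighborhood of $\widetilde{q}$ where $dev$ is injective, surjectivity from invariance under $\Phi_{-t}$). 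All of that is sound.

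The step you yourself flag as ``the hard part'' is, however, a genuine gap, and the contradiction rests entirely on it. Completeness of $\widetilde{\Phi}$ only lets you lift flow lines, not arbitrary paths, so it does not make $dev$ a covering map onto its image; developing maps of projective structures are in general not coverings onto their images, and nothing you established forces that here (if such a statement were available, the whole leaf-space analysis in this paper and in Cooper--Goldman would be unnecessary and the main theorem would follow in a few lines). The companion claim that $dev(\widetilde{N})$, being open, invariant and containing $\mathcal{A}^{+}$, must equal $\mathbb{RP}^n$ or $\mathbb{RP}^n$ minus a $\Phi$-invariant projective subspace is also unjustified: such a set can meet the invariant hyperplane $H$ in an arbitrary open invariant subset of the induced flow on $H$ (for instance the basin inside $H$ of one of its fixed points), and ``iterating the basin argument'' only applies once you know the image contains the relevant fixed points of the restricted flow. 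Since the final homotopy-theoretic contradictions (and the neat free-$\mathbb{Z}$-action-on-a-two-point-fiber argument, which is fine as far as it goes) are all conditioned on these two unproved claims, the lemma is not established. The proof the paper defers to stays at the level of what you already have: it exploits that the deck transformation generating $\pi_1(N)\cong\mathbb{Z}$ satisfies $dev\circ\gamma=C\circ dev$, hence permutes $dev^{-1}(p^{+})$ and carries basins of distinct preimages to disjoint basins, and it derives the contradiction from this equivariant basin picture without ever asserting that $dev$ is a covering onto its image; to complete your write-up you would either have to supply a genuine proof of the covering claim (unlikely to exist in this generality) or switch to an ending of that equivariant type.
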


\begin{lemma} \label{periodic}
The flow which is given by the vector field\hspace{0.2cm} $v$\hspace{0.2cm} on\hspace{0.2cm} $N$\hspace{0.2cm} is periodic and\hspace{0.2cm} $N$\hspace{0.2cm} is fibered as a product\hspace{0.2cm} $\widetilde{W} \times S^1$\hspace{0.2cm} by the flowlines.
\end{lemma}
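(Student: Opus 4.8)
The plan is to show that $v$ is nowhere zero and that its flow on $N$ is a free periodic flow, so that $N$ becomes the total space of a principal $S^1$-bundle over the orbit space, and then to identify that bundle with the product $\widetilde{W}\times S^1$, the $S^1$-factors being the flowlines. The mechanism is that $dev$ is an immersion carrying $\widetilde{v}$ to $V$, hence carrying the $\widetilde{v}$-flow $\widetilde{\phi}_t$ on $\widetilde{N}$ to the flow $\Phi_t\colon x\mapsto exp(tz)\cdot x$ on $\mathbb{RP}^n$; since $exp(z)=[C]$ is exactly the holonomy of the deck transformation $c$ that generates $\pi_1(N)\cong\mathbb{Z}$, ``flowing for time one'' on $N$ ought to be the identity.

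First I would note that each $\widetilde{v}$-flowline $\widetilde{\gamma}$ through $\widetilde{p}$ satisfies $dev(\widetilde{\gamma}(t))=exp(tz)\cdot dev(\widetilde{p})$, so it maps onto a single $G$-orbit of $\mathbb{RP}^n$. Next I would prove that $v$ has no zero: if $v(p)=0$ then, $d(dev)$ being injective, $dev(\widetilde{p})$ is a fixed point $q$ of $\Phi$, lying in the projectivization of some $C$-eigenspace; by Lemma \ref{no source} the corresponding eigenvalue is neither the largest nor the smallest eigenvalue of $C$, so $q$ is of saddle type. Through $\widetilde{p}$ there is then a $\widetilde{v}$-flowline $\widetilde{\gamma}$ whose developing image leaves $q$ along the unstable direction associated with the largest eigenvalue of $C$, so that $exp(tz)\cdot dev(\widetilde{\gamma}(0))$ converges, as $t\to+\infty$, to a point of the source (the projectivized eigenspace of the largest eigenvalue). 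Using the compactness of $N$ to control the accumulation of $\widetilde{\gamma}(t)$ — translating back by the deck group $\langle c\rangle$ — one would force a point of the source into $dev(\widetilde{N})$, contradicting Lemma \ref{no source}. (When $A^2=-Id$ this step is immediate, since then $\Phi$ has no fixed points other than its source and sink, and Lemma \ref{no source} already gives that $v$ is nowhere zero.)

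With $v$ nowhere zero, each $\widetilde{v}$-flowline maps onto a non-degenerate $G$-orbit $\cong\mathbb{R}$, and in particular $dev(\widetilde{\gamma}(1))=[C]\cdot dev(\widetilde{\gamma}(0))=dev(c\cdot\widetilde{\gamma}(0))$. I would then upgrade this to $\widetilde{\phi}_1=c$ on $\widetilde{N}$: the diffeomorphism $g:=c^{-1}\circ\widetilde{\phi}_1$ satisfies $dev\circ g=dev$ and commutes with the deck group, so it descends to $N$; restricting $dev$ to compact pieces and applying Lemma \ref{covering}, one sees that $dev$ is a covering onto a $G$-invariant domain of $\mathbb{RP}^n$ and that $g$ is a deck transformation of $\widetilde{N}\to N$ inducing the identity on developing images, whence $g=\mathrm{id}$. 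Thus the time-one map of the $v$-flow on $N$ is the identity, so this flow is periodic; checking via the local straightening of the nonsingular flow (through $dev$) that there are no exceptional short orbits shows the flow is free, so $N\to B$, sending a point to its flow-orbit, is a principal $S^1$-bundle whose fibers are the flowlines. From its homotopy exact sequence and $\pi_1(N)\cong\mathbb{Z}$ one gets $\pi_1(B)=1$, so $B$ is a closed simply connected $m$-manifold; then $H^2(B;\mathbb{Z})$ is torsion-free and the exact sequence forces the Euler class to pair trivially with $H_2(B)\cong\pi_2(B)$, hence to vanish, so the bundle is trivial and $N\cong B\times S^1$. Comparing with the product $N\cong\widetilde{W}\times S^1$ of Section \ref{themaintheorem} identifies $B$ with $\widetilde{W}$ and finishes the proof.

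The step I expect to be the main obstacle is the non-vanishing of $v$ — that is, squeezing everything out of Lemma \ref{no source}. Since that lemma excludes only the source and sink, not the intermediate saddle-type fixed subspaces of $\Phi$, one must combine it with the compactness of $N$ and the hyperbolic dynamics of $[C]$ on $\mathbb{RP}^n$, and the delicate point is matching the rate at which a developing image approaches the source against the rate of the iterates $[C]^{-m}$, i.e. keeping genuine control of the non-compact fibers of $dev$ while iterating the holonomy. This is exactly where Cooper and Goldman's three-dimensional argument requires the most care; an alternative is to extract a codimension-one $\Phi$-invariant foliation of a region of $\mathbb{RP}^n$ from $[C]$-invariant ratios of eigen-pairings of $C$, pull it back to $N$, and appeal to the Reeb--Thurston stability theorem \ref{reebthurston}.
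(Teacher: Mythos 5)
Your outline has the right shape (no zeros of $v$, then the time-one map of the flow is a deck transformation, then a trivial circle bundle), and it is essentially the Cooper--Goldman route; note the paper itself gives no proof of this lemma, deferring to Cooper--Goldman's Lemmas 4.5--4.6. But your proposal has a genuine gap exactly at the step you yourself flag: the non-vanishing of $v$. Lemma \ref{no source} excludes only the extreme attractor and repeller of $\Phi$ from $dev(\widetilde{N})$; a zero of $v$ develops to an intermediate (saddle) stratum of the fixed set, and your contradiction needs to push a nearby orbit toward the attractor \emph{inside} $N$: after choosing times $t_n\to\infty$ and deck elements $c^{-k_n}$ bringing the orbit back to a compact fundamental domain, the developing images are $C^{-k_n}\cdot\Phi_{t_n}(dev\,\widetilde{\gamma}(0))$, and whether these still converge to the attractor depends on comparing the contraction rate of $\Phi_{t_n}$ with the expansion of $C^{-k_n}$. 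You explicitly leave this comparison undone, and the Reeb--Thurston alternative is only a gesture (no invariant foliation is produced). Two smaller inaccuracies in the same step: the flowline through $\widetilde{p}$ itself is constant when $\widetilde{v}(\widetilde{p})=0$, so you must argue with orbits of nearby points in the unstable set; and your parenthetical that the case $A^2=-Id$ is ``immediate'' conflicts with the paper, which in that case explicitly allows $dev(\widetilde{N})$ to meet the fixed set and then replaces Lemmas \ref{no source} and \ref{periodic} by a separate van Kampen argument.

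A second unjustified step is $g:=c^{-1}\circ\widetilde{\phi}_1=\mathrm{id}$. That $dev\circ g=dev$ and that $g$ commutes with $c$ is correct, but the claim that ``$dev$ is a covering onto a $G$-invariant domain'' has no basis: Lemma \ref{covering} requires a compact source, $\widetilde{N}$ is not compact, and if the developing map were known to be a covering onto an invariant domain, most of the paper's subsequent leaf-space analysis would be superfluous; moreover $dev\circ g=dev$ does not force $g=\mathrm{id}$ when $dev$ is not injective. What local injectivity of $dev$ does give is that the set $\{\widetilde{x}:\widetilde{\phi}_1(\widetilde{x})=c\,\widetilde{x}\}$ is open and closed in $\widetilde{N}$, so it would suffice to exhibit one orbit of period one in the free homotopy class of $c$ --- but producing that single orbit is again the missing dynamical argument above. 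Finally, the concluding identification (vanishing Euler class, then ``cancelling'' the $S^1$ factor to identify the base with $\widetilde{W}$) is sketched rather than proved. In short, the steps you treat as routine are precisely the content of the Cooper--Goldman proofs that this lemma is quoting.
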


Let\hspace{0.2cm} $X= \mathbb{RP}^n \setminus Z$,\hspace{0.2cm} where\hspace{0.2cm} $Z$\hspace{0.2cm} is the zero set of\hspace{0.2cm} $V$.\hspace{0.2cm} Then\hspace{0.2cm} $X$\hspace{0.2cm} is foliated by flowlines. Let\hspace{0.2cm} $\mathcal{L}$\hspace{0.2cm} be the leaf space of this foliation. Since\hspace{0.2cm} $G$\hspace{0.2cm} is normalized by\hspace{0.2cm} $hol(\pi_1 (M))$\hspace{0.2cm} it follows that this group acts on\hspace{0.2cm} $\mathcal{L}$.\hspace{0.2cm} Since\hspace{0.2cm} $hol(\pi_1(N)) \subset G$\hspace{0.2cm} the action of\hspace{0.2cm} $hol(\pi_1(N))$\hspace{0.2cm} on\hspace{0.2cm} $\mathcal{L}$\hspace{0.2cm} is trivial, the action\hspace{0.2cm} $hol(\pi_1(M))$\hspace{0.2cm} on\hspace{0.2cm} $\mathcal{L}$\hspace{0.2cm} is induced by the involution\hspace{0.2cm} $\sigma$\hspace{0.2cm} by Section \ref{themaintheorem}. Therefore, the holonomy gives an involution on\hspace{0.2cm} $\mathcal{L}$.

Since\hspace{0.2cm} $dev(\widetilde{N}) \subset X$\hspace{0.2cm} there is a map from the leaf space of the induced foliation on\hspace{0.2cm} $\widetilde{N}$\hspace{0.2cm} into\hspace{0.2cm} $\mathcal{L}$.\hspace{0.2cm} The leaf space of\hspace{0.2cm} $\widetilde{N}$\hspace{0.2cm} is\hspace{0.2cm} $\widetilde{W}$\hspace{0.2cm} by Lemma \ref{periodic}. The induced map \label{induced} $$h: \widetilde{W} \longrightarrow \mathcal{L}$$ is a local homeomorphism. Since\hspace{0.2cm} $dev(\widetilde{N}) \subset \mathbb{RP}^n$\hspace{0.2cm} is invariant under\hspace{0.2cm} $hol(\pi_1(M))$\hspace{0.2cm} it follows that\hspace{0.2cm} $h(\widetilde{W}) \subset \mathcal{L}$\hspace{0.2cm} is invariant under involution.

After determining the possible generators\hspace{0.2cm} $[C_i]$\hspace{0.2cm} of\hspace{0.2cm} $hol$,\hspace{0.2cm} we specify the orbit space\hspace{0.2cm} $\mathcal{L}_i$,\hspace{0.2cm} which corresponds to\hspace{0.2cm} $C_i$\hspace{0.2cm} of\hspace{0.2cm} $X= \mathbb{RP}^n \setminus Z$.

To determine the orbit space of\hspace{0.2cm} $C_i$\hspace{0.2cm} we study the zero set of\hspace{0.2cm} $C_i$\hspace{0.2cm} in the following cases.

\textbf{Case 1:}
If the dimension is even\hspace{0.2cm} $n= 2k$\hspace{0.2cm} then there are\hspace{0.2cm} $\displaystyle \frac{n}{2}=k$\hspace{0.2cm} possible cases. Namely, the zero set\hspace{0.2cm} $Z$\hspace{0.2cm} for\hspace{0.2cm} $C_i$\hspace{0.2cm} is the disjoint union of\hspace{0.2cm} $2i$\hspace{0.2cm} points and a linear subspace\hspace{0.2cm} $\mathbb{RP}^{2k-2i}$,\hspace{0.2cm} where\hspace{0.2cm} $1\leq i\leq k$.

For\hspace{0.2cm} $C_1$,\hspace{0.2cm} the zero set consists of one source, one sink and a copy of\hspace{0.2cm} $\mathbb{RP}^{2k-2}$.\hspace{0.2cm} Call these elements as $p_1$: source, $p_2 : \mathbb{RP}^{2k-2}$ and $p_3$: sink. By taking the boundary of a tubular neighborhood of each element of\hspace{0.2cm} $Z$\hspace{0.2cm} in\hspace{0.2cm} $\mathbb{RP}^{2k}$,\hspace{0.2cm} we determine each set of flowlines between any pair of\hspace{0.2cm} $p_i$'s.

\begin{figure}[h]
\begin{center}
\scalebox{0.5}{\includegraphics{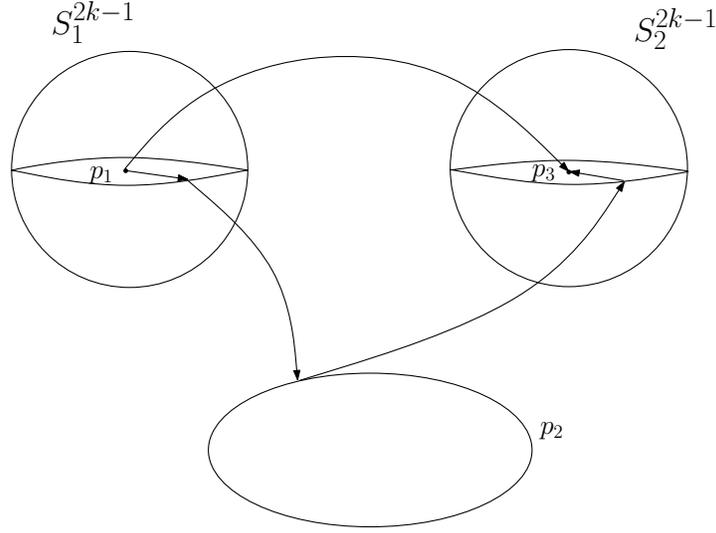}}
\caption{Flowlines for $C_1$}
\label{$C_1$}
\end{center}
\end{figure}

The corresponding flow for\hspace{0.2cm} $C_1$\hspace{0.2cm} is given by
$$\big[x_0{\lambda_1}^t: x_1 {\lambda_1}^{-t}: x_2 : x_3 : ... : x_n \big].$$

If\hspace{0.2cm} $x_1 \neq 0$\hspace{0.2cm} then the flow can be written as $$\Big[ \frac{x_0}{x_1}{\lambda_1}^{2t}: 1: \frac{x_2}{x_1}{\lambda_1}^t : \frac{x_3}{x_1}{\lambda_1}^t : ... : \frac{x_n}{x_1}{\lambda_1}^t \Big].$$
Note that as\hspace{0.2cm} $t\rightarrow - \infty$\hspace{0.2cm} the flow tends to its source, which is\hspace{0.2cm} $\big[0: 1: 0: 0: ... :0\big]$.\\
When\hspace{0.2cm} $x_0 \neq 0$,\hspace{0.2cm} the flow can be written as  $$\Big[ 1: \frac{x_1}{x_0}{\lambda_1}^{-2t}: \frac{x_2}{x_0}{\lambda_1}^{-t}: \frac{x_3}{x_0} {\lambda_1}^{-t}: ... : \frac{x_n}{x_0} {\lambda_1}^{-t}\Big].$$
Similarly, as\hspace{0.2cm} $t\rightarrow + \infty$\hspace{0.2cm} the flow tends to its sink, which is\hspace{0.2cm} $\big[1: 0: 0: 0: ... : 0\big]$.

If we use Euclidean coordinates, there are\hspace{0.2cm} $n$\hspace{0.2cm} parameters in\hspace{0.2cm} $\mathbb{R}^{n}$,\hspace{0.2cm} but one of them is not zero\hspace{0.2cm} $(x_0 \neq 0)$.\hspace{0.2cm} Thus, the flowlines  starting from\hspace{0.2cm} $p_1$\hspace{0.2cm} and leaving\hspace{0.2cm} $S_1$\hspace{0.2cm} from the northern (or the southern) hemisphere go to\hspace{0.2cm} $p_3$\hspace{0.2cm} from the northern (or the southern) hemisphere of\hspace{0.2cm} $S_2$,\hspace{0.2cm} see Figure \ref{$C_1$}.

We consider the source coordinates in Euclidean coordinates as $$\Big( \frac{x_0}{x_1}, \frac{x_2}{x_1}, \frac{x_3}{x_1}, ... , \frac{x_n}{x_1}\Big) \in \mathbb{R}^{n}$$ and the sink coordinates in Euclidean coordinates as $$\Big( \frac{x_1}{x_0}, \frac{x_2}{x_0}, \frac{x_3}{x_0}, ... , \frac{x_n}{x_0}\Big)\in \mathbb{R}^{n}.$$

For the flowlines starting at\hspace{0.2cm} $p_1$\hspace{0.2cm} and leaving\hspace{0.2cm} $S_1$\hspace{0.2cm} from the northern hemisphere we assume\hspace{0.2cm} $\displaystyle \frac{x_0}{x_1} > 0$,\hspace{0.2cm} for\hspace{0.2cm} $x_1 \neq 0$.\hspace{0.2cm} Then\hspace{0.2cm} $\displaystyle \frac{x_1}{x_0} > 0$,\hspace{0.2cm} for\hspace{0.2cm} $x_0 \neq 0$\hspace{0.2cm} and the flowlines go to\hspace{0.2cm} $p_3$\hspace{0.2cm} from the northern hemisphere of\hspace{0.2cm} $S_2$.\hspace{0.2cm} Similarly, when\hspace{0.2cm} $\displaystyle \frac{x_0}{x_1} < 0$,\hspace{0.2cm} for\hspace{0.2cm} $x_0 \neq 0$\hspace{0.2cm} and\hspace{0.2cm} $x_1 \neq 0$\hspace{0.2cm} there is an identification between the southern hemispheres of\hspace{0.2cm} $S_1$\hspace{0.2cm} and\hspace{0.2cm} $S_2$.

Moreover, the flowlines starting from the equator of\hspace{0.2cm} $S_1$\hspace{0.2cm} go to\hspace{0.2cm} $p_2=\mathbb{RP}^{2k-2}$\hspace{0.2cm} and the flowlines starting from\hspace{0.2cm} $p_2$\hspace{0.2cm} go to the equator of\hspace{0.2cm} $S_2$.\hspace{0.2cm} Therefore, the leaf space can be thought as a sphere\hspace{0.2cm} $S^{2k-1}$\hspace{0.2cm} with two disjoint equators. We simply say that\hspace{0.2cm} $S^{2k-1}$\hspace{0.2cm} has a double equator.

The table below describes the subspaces of\hspace{0.2cm} $\mathcal{L}$\hspace{0.2cm} consisting of the flowlines starting at\hspace{0.2cm} $p_i$\hspace{0.2cm} and ending at\hspace{0.2cm} $p_j$,\hspace{0.2cm} for the matrix\hspace{0.2cm} $C_1$.\hspace{0.2cm} The symbol\hspace{0.2cm} 
`$\emptyset$'\hspace{0.2cm} shows that there is no flowline. In the table, we label source points in the upper horizontal line and in the vertical line we label sink points.
\begin{table}[H]
\begin{center}
    \begin{tabular}{| l | l | l | l |}
    \hline
     source/sink & $p_1$ & $p_2$ & $p_3$  \\ \hline
      $p_1$ & $\emptyset$ & $\emptyset$ & $\emptyset$ \\ \hline
      $p_2$ & $S^{2k-2}$ & $\emptyset$ & $\emptyset$ \\ \hline
      $p_3$ & $S^{2k-1}$ & $S^{2k-2}$ & $\emptyset$ \\
    \hline
    \end{tabular}
\bigskip
\caption{The subspaces of the leaf space $\mathcal{L}_1$ for even dimensional case.}
\end{center}
\end{table}
\noindent Note that the table above implies that the leaf space\hspace{0.2cm} $\mathcal{L}_1$\hspace{0.2cm} consists of a copy of\hspace{0.2cm} $S^{2k-1}$\hspace{0.2cm} with a double equator. If a sphere has a double equator, we will denote the sphere as\hspace{0.2cm} $\mathcal{S}$.\hspace{0.2cm} Therefore, the leaf space is\hspace{0.2cm} $\mathcal{L}_1=\mathcal{S}^{2k-1}$.

For\hspace{0.2cm} $C_2$,\hspace{0.2cm} the zero set consists of\hspace{0.2cm} $p_1,\hspace{0.2cm} p_2,\hspace{0.2cm} p_3 = \mathbb{RP}^{2k-4},\hspace{0.2cm} p_4,\hspace{0.2cm} p_5$\hspace{0.2cm} and the leaf space is\hspace{0.2cm} $\mathcal{L}_2=\mathcal{S}^{2k-1} \cup \mathcal{S}^{2k-3}$.\hspace{0.2cm} The corresponding flow is given by $$\big[ x_0 \lambda^{t}_1: x_1 \lambda^{-t}_1: x_2 \lambda^{t}_2: x_3 \lambda^{-t}_2: x_4: x_5: \dots : x_{2k}\big].$$ Similarly, the table below gives a list of subspaces of the space of flowlines for\hspace{0.2cm} $C_2$\hspace{0.2cm} that starts at each\hspace{0.2cm} $p_i$\hspace{0.2cm} and ends at each\hspace{0.2cm} $p_j$,\hspace{0.2cm} $i,j=1, 2, ... , 5.$

\begin{table}[H]
\begin{center}
    \begin{tabular}{| l | l | l | l | l | l |}
    \hline
     source/sink & $p_1$ & $p_2$ & $p_3$ & $p_4$ & $p_5$  \\ \hline
      $p_1$ & $\emptyset$ & $\emptyset$ & $\emptyset$ & $\emptyset$ & $\emptyset$ \\ \hline
      $p_2$ & $S^0$ & $\emptyset$ & $\emptyset$ & $\emptyset$ & $\emptyset$ \\ \hline
      $p_3$ & $S^{2k-3}-S^0$ & $S^{2k-4}$ & $\emptyset$ & $\emptyset$ & $\emptyset$ \\ \hline
      $p_4$ & $S^{2k-2}-S^{2k-3}$ & $S^{2k-3}-S^{2k-4}$ & $S^{2k-4}$ & $\emptyset$ & $\emptyset$ \\ \hline
      $p_5$ & $S^{2k-1}-S^{2k-2}$ & $S^{2k-2}-S^{2k-3}$ & $S^{2k-3}-S^0$ & $S^0$ & $\emptyset$ \\ \hline
    \end{tabular}
\bigskip
\caption{The subspaces of the leaf space $\mathcal{L}_2$ in even dimensional case.}
\end{center}
\end{table}
\noindent In all cases, the subspaces above the diagonal in each table are empty and the nonempty spheres on the antidiagonal have a double equator.

To understand the topology of the space\hspace{0.2cm} $\mathcal{L}$\hspace{0.2cm} clearly, we give an example in dimension $6$. Consider the matrix\hspace{0.2cm} $C_3$\hspace{0.2cm} with different eigenvalues
$$\begin{bmatrix}
    \lambda_1 & 0 & 0 & 0 & 0   & 0 & 0 \\
    0 & \lambda^{-1}_1 & 0 & 0 & 0   & 0 & 0 \\
    0 & 0 & \lambda_2 & 0 & 0  & 0 & 0 \\
    0 & 0 & 0 & \lambda^{-1}_2 & 0  & 0 & 0 \\
    0 & 0 & 0 & 0 & \lambda_3  & 0 & 0 \\
    0 & 0 & 0 & 0 & 0 & \lambda^{-1}_3 & 0  \\
    0 & 0 & 0 & 0 & 0 & 0 & 1
\end{bmatrix}$$
and thus the corresponding leaf space is\hspace{0.2cm} $\mathcal{L}_3=\mathcal{S}^{5} \cup \mathcal{S}^{3} \cup \mathcal{S}^{1}$,\hspace{0.2cm} where\hspace{0.2cm} $\mathcal{S}^{5}=\mathcal{S}^{5}_1$,\hspace{0.2cm} $\mathcal{S}^{3}=\mathcal{S}^{3}_2$,\hspace{0.2cm} $\mathcal{S}^{1}=\mathcal{S}^{1}_3$\hspace{0.2cm} in Table \ref{dimension6}. Moreover, the involution\hspace{0.2cm} $\tau$\hspace{0.2cm} interchanges the spheres symmetric with respect to the vertical line through\hspace{0.2cm} $\mathcal{S}^{5}_1$\hspace{0.2cm} in Table \ref{dimension6}. Indeed,\hspace{0.2cm} $\tau(S^{0}_j)=S^{0}_{7-j}$\hspace{0.2cm} and\hspace{0.2cm} $\tau(\mathcal{S}^{i}_j)=\mathcal{S}^{i}_{7-i-j}$.

\newcommand{\ap}{\ensuremath{\swarrow\,\searrow}}
\setlength{\tabcolsep}{4pt}
\begin{table}[H]
\begin{center}
\begin{tabular}{ccccccccccc}
  &     &     &     &      & $\mathcal{S}^{5}_1$   &      &      &     &    &\\
  &     &     &     &      & \ap &      &      &     &    &\\
  &     &     &     & $\mathcal{S}^{4}_1$    &     &  $\mathcal{S}^{4}_2$   &      &     &    &\\
  &     &     &     & \ap  &     &  \ap &      &     & \\
  &     &     & $\mathcal{S}^{3}_1$   &      & $\mathcal{S}^{3}_2$   &      & $\mathcal{S}^{3}_3$    &     &    &\\
  &     &     & \ap &      & \ap &      & \ap  &     & \\
  &     & $\mathcal{S}^{2}_1$   &     & $\mathcal{S}^{2}_2$    &     &  $\mathcal{S}^{2}_3$   &      & $\mathcal{S}^{2}_4$   &    &\\
  &     &\ap  &     & \ap  &     &  \ap &      & \ap & \\
  & $\mathcal{S}^{1}_1$ &     & $\mathcal{S}^{1}_2$   &      & $\mathcal{S}^{1}_3$   &      & $\mathcal{S}^{1}_4$    &     & $\mathcal{S}^{1}_5$  &\\
  & \ap &     &\ap   &          &\ap      &             &\ap      &              &\ap      &         \\
$S^{0}_1$ &  &  $S^{0}_2$  &    & $S^{0}_3$  &       &  $S^{0}_4$  &   &   $S^{0}_5$  &    &   $S^{0}_6$
\end{tabular}
\bigskip
\caption{The leaf space $\mathcal{L}_3= \mathcal{S}^{5} \cup \mathcal{S}^{3} \cup \mathcal{S}^{1}$. Note that all the spheres in the diagram except the ones in the last row have a double equator.}
\label{dimension6}
\end{center}
\end{table}
\noindent Now, consider the matrix $C_2$ in dimension $6$
$$\begin{bmatrix}
    \lambda_1 & 0 & 0 & 0 & 0   & 0 & 0 \\
    0 & \lambda^{-1}_1 & 0 & 0 & 0   & 0 & 0 \\
    0 & 0 & \lambda_2 & 0 & 0  & 0 & 0 \\
    0 & 0 & 0 & \lambda^{-1}_2 & 0  & 0 & 0 \\
    0 & 0 & 0 & 0 & 1  & 0 & 0 \\
    0 & 0 & 0 & 0 & 0 & 1 & 0  \\
    0 & 0 & 0 & 0 & 0 & 0 & 1
\end{bmatrix}$$
such that the zero set consists of\hspace{0.2cm} $p_1,\hspace{0.2cm} p_2,\hspace{0.2cm} p_3= \mathbb{RP}^2,\hspace{0.2cm} p_4,\hspace{0.2cm} p_5$\hspace{0.2cm} and the corresponding leaf space is\hspace{0.2cm} $\mathcal{L}_2=\mathcal{S}^{5} \cup \mathcal{S}^{3}$.
\setlength{\tabcolsep}{4pt}
\begin{table}[H]
\begin{center}
\begin{tabular}{cccccccccccc}
& & & & & $\mathcal{S}^5$ & & & & &\\
& & & & & \ap   & & & & &\\
& & & & $\mathcal{S}^{4}_1$ & & $\mathcal{S}^{4}_2$ & & & &\\
& & & & \ap       & & \ap       & & & &\\
& & & $\mathcal{S}^{3}_1$ & & $\mathcal{S}^{3}_2$ & &  $\mathcal{S}^{3}_3$ & & &\\
& & & \ap       & & \ap       & & \ap      & & &\\
& & $\swarrow$ & & $S^{2}_1$ & & $S^{2}_2$ & & $\searrow$ & & \\
& $S^{0}_1$ &  &  & &  & &  & &  $S^{0}_2$ &
\end{tabular}
\bigskip
\caption{The leaf space $\mathcal{L}_2= \mathcal{S}^{5} \cup \mathcal{S}^{3}$.}
\end{center}
\end{table}
\noindent Moreover, we have a similar table for each\hspace{0.2cm} $C_i$\hspace{0.2cm} matrix.
In general, for\hspace{0.2cm} $C_i$\hspace{0.2cm} the leaf space becomes\hspace{0.2cm} $\mathcal{L}_{i} = \mathcal{S}^{2k-1} \cup \mathcal{S}^{2k-3} \cup ... \cup \mathcal{S}^{2k-1-2(i-1)}$.

\textbf{Case 2:}
The dimension is odd, let us say\hspace{0.2cm} $n= 2k-1$.\hspace{0.2cm} Then there are\hspace{0.2cm} $\displaystyle \frac{n+1}{2}+1=k+1$\hspace{0.2cm} possible cases.

We get leaf spaces similar to Case 1, for\hspace{0.2cm} $1\leq i\leq k$.\hspace{0.2cm} For example, for\hspace{0.2cm} $C_2$\hspace{0.2cm} the zero set consists of four points and a copy of\hspace{0.2cm} $\mathbb{RP}^{2k-5}$,\hspace{0.2cm} call them\hspace{0.2cm} $p_1$,\hspace{0.2cm} $p_2$,\hspace{0.2cm} $p_3 =\mathbb{RP}^{2k-5}$,\hspace{0.2cm} $p_4$\hspace{0.2cm} and\hspace{0.2cm} $p_5$.

\begin{table}[H]
\begin{center}
    \begin{tabular}{| l | l | l | l | l | l |}
    \hline
     source/sink & $p_1$ & $p_2$ & $p_3$ & $p_4$ & $p_5$  \\ \hline
      $p_1$ & $\emptyset$ & $\emptyset$ & $\emptyset$ & $\emptyset$ & $\emptyset$ \\ \hline
      $p_2$ & $S^0$ & $\emptyset$ & $\emptyset$ & $\emptyset$ & $\emptyset$ \\ \hline
      $p_3$ & $S^{2k-4}-S^0$ & $S^{2k-5}$ & $\emptyset$ & $\emptyset$ & $\emptyset$ \\ \hline
      $p_4$ & $S^{2k-3}-S^{2k-4}$ & $S^{2k-4}-S^{2k-5}$ & $S^{2k-5}$ & $\emptyset$ & $\emptyset$ \\ \hline
      $p_5$ & $S^{2k-2}-S^{2k-3}$ & $S^{2k-3}-S^{2k-4}$ & $S^{2k-4}-S^0$ & $S^0$ & $\emptyset$ \\ \hline
    \end{tabular}
\bigskip
\caption{The subspaces of the leaf space $\mathcal{L}_2$ in odd dimensional case.}
 \label{leaf space}
\end{center}
\end{table}
\noindent The leaf space is\hspace{0.2cm} $\mathcal{L}_2 =\mathcal{S}^{2k-2} \cup \mathcal{S}^{2k-4}$,\hspace{0.2cm} see  Table \ref{leaf space}.

In general, for\hspace{0.2cm} $C_i$\hspace{0.2cm} the zero set consists of\hspace{0.2cm} $p_1,\hspace{0.2cm} p_2, \dots ,p_{i+1}=\mathbb{RP}^{2k-1-2i}, \dots ,p_{2i+1}$\hspace{0.2cm} and the leaf space\hspace{0.2cm} $\mathcal{L}_i =\mathcal{S}^{2k-2} \cup \mathcal{S}^{2k-4} \cup \dots \cup \mathcal{S}^{2k-2i}$,\hspace{0.2cm} for\hspace{0.2cm} $1\leq i < k$\hspace{0.2cm} and if\hspace{0.2cm} $i=k,\hspace{0.2cm}  \mathcal{L}_i =\mathcal{S}^{2k-2} \cup \mathcal{S}^{2k-4} \cup \dots \cup \mathcal{S}^{2} \cup {S}^{0}$.

We have an immersion induced by the developing map (see page \pageref{induced})
$$h: \widetilde{W}^{n-1} \longrightarrow \mathcal{L}^{n-1}.$$
The decomposition of\hspace{0.2cm} $\mathcal{L}$\hspace{0.2cm} contains two $(n-1)$-dimensional open discs, which are\hspace{0.2cm} $D^{n-1}_{+}$\hspace{0.2cm} and\hspace{0.2cm} $D^{n-1}_{-}$,\hspace{0.2cm} see Figure \ref{disc}.

\begin{figure}[h]
\begin{center}
\scalebox{0.6}{\includegraphics{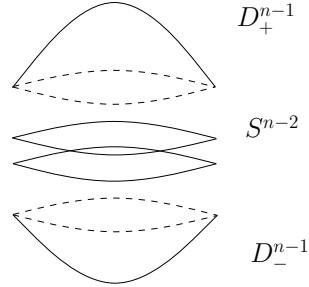}}
\caption{The decomposition of $\mathcal{S}^{n-1}$ in $\mathcal{L}$.}
\label{disc}
\end{center}
\end{figure}

Now, we give some auxiliary lemmas for the proof of Theorem \ref{main thm}. For more detail see \cite{coban}.

\begin{lemma}\label{lemma3}
Assume that\hspace{0.2cm} $D^{n-1}_{\pm}$\hspace{0.2cm} and the map\hspace{0.2cm} $h$\hspace{0.2cm} are as above. Let\hspace{0.2cm} $K \subseteq D^{n-1}_{+}$\hspace{0.2cm} or\hspace{0.2cm} $K \subseteq D^{n-1}_{-}$\hspace{0.2cm} be a closed disc and assume that\hspace{0.2cm} $h^{-1}(K)=H$\hspace{0.2cm} is not empty. Then the restriction of\hspace{0.2cm} $h$\hspace{0.2cm} to the subspace\hspace{0.2cm} $H$,\hspace{0.2cm} $h^{\prime}:H \longrightarrow K$\hspace{0.2cm} is onto and it is a finite sheeted covering.
\end{lemma}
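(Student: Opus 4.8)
The plan is to reduce the statement to Lemma~\ref{covering}. I will show that $h' : H \longrightarrow K$ is a local homeomorphism from a nonempty compact Hausdorff space $H$ to a connected Hausdorff space $K$; Lemma~\ref{covering} then yields at once that $h'$ is a finite sheeted covering map, and surjectivity will follow from connectedness of $K$. Thus the real content of the proof lies in verifying these hypotheses.

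First I would settle the point-set preliminaries. Since $W$ is a closed manifold with $\pi_1(W)\cong\mathbb{Z}_2$, its universal cover $\widetilde{W}$ is a double (in particular finite) cover of $W$, hence compact; and, being a manifold, $\widetilde{W}$ is Hausdorff, so its subspace $H$ is Hausdorff as well. The set $K$ is a closed disc contained in the open disc $D^{n-1}_{\pm}\subset\mathcal{L}$, so $K$ is compact, connected and Hausdorff; moreover, lying inside the honest open disc $D^{n-1}_{\pm}$ and bounded away from the non-Hausdorff locus of the leaf space $\mathcal{L}$, the set $K$ is closed in $\mathcal{L}$. Consequently $H=h^{-1}(K)$ is closed in the compact space $\widetilde{W}$, hence compact, and it is nonempty by hypothesis.

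Next comes the key step: $h'$ is a local homeomorphism. Fix $x\in H$ and set $y=h(x)\in K$. Since $h$ is a local homeomorphism, there are open sets $x\in U\subseteq\widetilde{W}$ and $y\in V\subseteq\mathcal{L}$ with $h|_U : U \longrightarrow V$ a homeomorphism, and after shrinking $U$ we may assume $V\subseteq D^{n-1}_{\pm}$. Because $h(U)=V$ we have $U\cap H=(h|_U)^{-1}(V\cap K)$, and $h|_U$ restricts to a homeomorphism of $U\cap H$ onto $V\cap K$, which is a relatively open neighborhood of $y$ in $K$. Hence every point of $H$ has a neighborhood on which $h'$ is a homeomorphism onto an open subset of $K$; that is, $h'$ is a local homeomorphism. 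The argument is uniform in $x$, so points of $H$ lying over $\partial K$ need no separate treatment.

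Finally I would apply Lemma~\ref{covering} with $X=H$ and $Y=K$ to conclude that $h'$ is a finite sheeted covering map. For surjectivity, note that $h'(H)$ is open in $K$ (images of local homeomorphisms are open) and closed in $K$ (the continuous image of a compact set in a Hausdorff space); since $H\neq\emptyset$ and $K$ is connected, $h'(H)=K$, so $h'$ is onto. The step I expect to be the main obstacle is the point-set paragraph, specifically the claim that $H$ is compact: leaf spaces of foliations are in general non-Hausdorff --- indeed the ``double equators'' of $\mathcal{L}$ are exactly where Hausdorffness fails --- so one must use in an essential way that $K$ has been placed inside the genuine open disc $D^{n-1}_{\pm}$, which is what makes $K$ closed in $\mathcal{L}$ and supplies honest homeomorphism charts for $h$ over the region of interest.
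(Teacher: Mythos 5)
Your proposal is correct and follows essentially the same route as the paper: establish that $h'$ is a local homeomorphism between the compact Hausdorff space $H$ and the connected Hausdorff disc $K$, get surjectivity from the open--closed image argument and connectedness of $K$, and invoke Lemma~\ref{covering} to conclude it is a finite sheeted covering. Your extra care about the compactness of $H$ (via $K$ being closed in the possibly non-Hausdorff leaf space $\mathcal{L}$) only makes explicit a point the paper passes over quickly.
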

\begin{proof}
Since\hspace{0.2cm} $h^{\prime}:H \longrightarrow K$\hspace{0.2cm} is a submersion,\hspace{0.2cm} $H$\hspace{0.2cm} is a submanifold in\hspace{0.2cm} $\widetilde{W}$\hspace{0.2cm} with boundary. Then the induced map\hspace{0.2cm} $h^{\prime}:H \longrightarrow K$\hspace{0.2cm} is a local homeomorphism. Since local homeomorphisms are open maps,\hspace{0.2cm} $h^{\prime}$\hspace{0.2cm} is open. The map\hspace{0.2cm} $h^{\prime}$\hspace{0.2cm} is also closed. To see this, take a closed subset\hspace{0.2cm} $Y$\hspace{0.2cm} of\hspace{0.2cm} $H$.\hspace{0.2cm} Since\hspace{0.2cm} $H$\hspace{0.2cm} is compact, the subset\hspace{0.2cm} $Y$\hspace{0.2cm} is also compact. The image of\hspace{0.2cm} $Y$\hspace{0.2cm} is compact because\hspace{0.2cm} $h^{\prime}$\hspace{0.2cm} is continuous. Finally, since\hspace{0.2cm} $K$\hspace{0.2cm} is Hausdorff,\hspace{0.2cm} $h^{\prime}(Y)$\hspace{0.2cm} is closed. Therefore, the map\hspace{0.2cm} $h^{\prime}$\hspace{0.2cm} is both open and closed. Then\hspace{0.2cm} $h^{\prime}(H)=K$\hspace{0.2cm} since\hspace{0.2cm} $K$\hspace{0.2cm} is connected. Since\hspace{0.2cm} $H$\hspace{0.2cm} is compact and\hspace{0.2cm} $h^{\prime}:H \longrightarrow K$\hspace{0.2cm} is a local homeomorphism, where both\hspace{0.2cm} $H$\hspace{0.2cm} and\hspace{0.2cm} $K$\hspace{0.2cm} are Hausdorff, by Lemma \ref{covering},\hspace{0.2cm} $h^{\prime}$\hspace{0.2cm} is a covering projection. Moreover, it is finite sheeted since\hspace{0.2cm} $H$\hspace{0.2cm} is compact.
\end{proof}

\begin{lemma}
The image\hspace{0.2cm} $h(\widetilde{W})$\hspace{0.2cm} contains the top dimensional open discs\hspace{0.2cm} $D^{n-1}_{+}$\hspace{0.2cm} and\hspace{0.2cm} $D^{n-1}_{-}$\hspace{0.2cm} in\hspace{0.2cm} $\mathcal{L}$.\hspace{0.2cm} Moreover, when we restrict\hspace{0.2cm} $h$\hspace{0.2cm} to the preimages of these discs, the map\hspace{0.2cm} $h^{-1}(D^{n-1}_{\pm })\longrightarrow D^{n-1}_{\pm}$\hspace{0.2cm} is a finite sheeted covering space.
\end{lemma}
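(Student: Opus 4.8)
The plan is to reduce the statement to Lemma \ref{lemma3} by an exhaustion argument, once it is known that the developing image meets each of the two top discs. The tools at hand are: $h\colon\widetilde{W}\longrightarrow\mathcal{L}$ is a local homeomorphism (hence an open map) with $\widetilde{W}$ compact, and $h(\widetilde{W})\subseteq\mathcal{L}$ is invariant under the holonomy-induced involution of $\mathcal{L}$.

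First I would show that $h(\widetilde{W})$ meets both $D^{n-1}_+$ and $D^{n-1}_-$. From the explicit description of the leaf space ($\mathcal{L}=\mathcal{S}^{n-1}\cup\mathcal{S}^{n-3}\cup\cdots$, with the tabulated stratification), the complement $\mathcal{L}\setminus(D^{n-1}_+\cup D^{n-1}_-)$ is a finite union of spheres of dimension at most $n-2$ (the doubled equator of $\mathcal{S}^{n-1}$ together with the lower-dimensional spheres $\mathcal{S}^{n-3},\mathcal{S}^{n-5},\ldots$), hence nowhere dense in the $(n-1)$-dimensional space $\mathcal{L}$; equivalently, $D^{n-1}_+\cup D^{n-1}_-$ is open and dense. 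Thus the nonempty open set $h(\widetilde{W})$ meets $D^{n-1}_+\cup D^{n-1}_-$, and since $h(\widetilde{W})$ is invariant under the holonomy-induced involution, which interchanges $D^{n-1}_+$ and $D^{n-1}_-$, it meets \emph{both}; in particular $h^{-1}(D^{n-1}_+)$ and $h^{-1}(D^{n-1}_-)$ are nonempty.

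Next I would exhaust $D^{n-1}_+$. Identifying $D^{n-1}_+$ with $\mathbb{R}^{n-1}$, pick closed discs $K_1\subseteq K_2\subseteq\cdots$ with $K_j\subseteq\operatorname{int}K_{j+1}$ and $\bigcup_j K_j=D^{n-1}_+$. Since $h^{-1}(D^{n-1}_+)\neq\emptyset$, some $h^{-1}(K_{j_0})$ is nonempty, hence so is $h^{-1}(K_j)$ for all $j\ge j_0$, and Lemma \ref{lemma3} makes each $h\colon h^{-1}(K_j)\longrightarrow K_j$ onto and a finite sheeted covering; because each $K_j$ is connected, the number of sheets equals $\lvert h^{-1}(x_0)\rvert$ for a fixed $x_0\in K_{j_0}$, a single integer $d$ independent of $j$. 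Then every point of $D^{n-1}_+$ lies in some $K_j$ with $j\ge j_0$, so $h\colon h^{-1}(D^{n-1}_+)=\bigcup_{j\ge j_0}h^{-1}(K_j)\longrightarrow D^{n-1}_+$ is onto — whence $D^{n-1}_+\subseteq h(\widetilde{W})$ — and it is $d$-to-one; local triviality around a point $q$ follows by restricting the covering $h^{-1}(K_j)\to K_j$ over a small disc $B$ with $q\in\operatorname{int}B\subseteq K_j$ for a suitable $j\ge j_0$. Hence $h^{-1}(D^{n-1}_+)\longrightarrow D^{n-1}_+$ is a finite sheeted covering space, and the identical reasoning (or transport by the involution, under which $h$ and $h(\widetilde{W})$ are equivariant) handles $D^{n-1}_-$.

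The step carrying the actual content is the first one: showing the developing image reaches the top-dimensional strata $D^{n-1}_\pm$. This rests on the density of $D^{n-1}_+\cup D^{n-1}_-$ in $\mathcal{L}$ and on the fact that the holonomy-induced involution interchanges the two open hemispheres of the top sphere $\mathcal{S}^{n-1}$, both of which have to be read off the stratified structure of $\mathcal{L}$ computed in the tables above. Everything after that is a routine application of Lemma \ref{lemma3}; in particular it does not require $\mathcal{L}$ itself to be Hausdorff, since that lemma only uses that the closed disc $K$ is Hausdorff.
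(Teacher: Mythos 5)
Your second step (the exhaustion of $D^{n-1}_{+}$ by closed discs, constancy of the number of sheets, and the reduction to Lemma \ref{lemma3}) is fine, and it is essentially how the paper passes from ``the image meets a disc'' to ``the restriction over that disc is a finite sheeted covering''. The gap is exactly in the step you yourself single out as carrying the content: the claim that the holonomy-induced involution interchanges $D^{n-1}_{+}$ and $D^{n-1}_{-}$ is false in the cases to which this lemma applies. The involution on $\mathcal{L}$ is induced by $[A]$ with $ACA^{-1}=C^{-1}$ and (after rescaling) $A^{2}=Id$; hence $A$ carries the $\lambda_{\max}$-eigenline of $C$ onto the $\lambda_{\max}^{-1}$-eigenline and back, say $Ae_{0}=\alpha e_{1}$, $Ae_{1}=\beta e_{0}$ with $\alpha\beta=1$. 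The two top discs are distinguished by the sign of $x_{0}x_{1}$ along a generic flowline, and this sign is multiplied by $\alpha\beta=1$ under $A$; so the involution preserves each of $D^{n-1}_{+}$ and $D^{n-1}_{-}$ setwise, and what it actually interchanges is the two copies of the doubled equator (this is precisely the feature the paper exploits at the very end, where the involution ``changes the double origins with each other''). Consequently, invariance of $h(\widetilde{W})$ under the involution gives no information once you know the image meets $D^{n-1}_{+}$: your argument only establishes that the image meets one of the two discs, not both. (Only when $A^{2}=-Id$ would one get $\alpha\beta=-1$, but in that case the leaf space is a sphere bundle over $\mathbb{RP}^{k-1}$ and there are no discs $D^{n-1}_{\pm}$ at all; that case is treated separately in the paper.)

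The paper closes this step differently, and you need an argument of that kind: suppose $h(\widetilde{W})$ misses $D^{n-1}_{-}$. Since $h$ is a local homeomorphism, $h(\widetilde{W})$ is open, and every point of the equators $S^{n-2}$ of $\mathcal{S}^{n-1}$ (and likewise every point of the lower-dimensional spheres of $\mathcal{L}$) has the property that each of its neighborhoods contains points of $D^{n-1}_{-}$; hence the image must avoid all of these as well, so $h(\widetilde{W})\subseteq D^{n-1}_{+}$. Then $h\colon\widetilde{W}\longrightarrow D^{n-1}_{+}$ is a local homeomorphism from a compact Hausdorff space to a connected Hausdorff space, hence a finite sheeted covering by Lemma \ref{covering}; as $D^{n-1}_{+}$ is an open disc, $\widetilde{W}$ would be a disjoint union of open discs, contradicting its compactness. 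This contradiction shows the image meets both discs, after which your exhaustion argument (or the paper's direct appeal to Lemma \ref{lemma3}) finishes the proof.
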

\begin{proof}
Since the map\hspace{0.2cm} $h$\hspace{0.2cm} is a local homeomorphism,\hspace{0.2cm} $h(\widetilde{W})$\hspace{0.2cm} contains at least one point in one of the $(n-1)$-dimensional open discs in\hspace{0.2cm} $\mathcal{L}$.\hspace{0.2cm} By Lemma \ref{lemma3}, if\hspace{0.2cm} $h(\widetilde{W})$\hspace{0.2cm} contains one point of an open disc, it is onto that open disc. Without loss of generality, let us say\hspace{0.2cm} $h(\widetilde{W})$\hspace{0.2cm} contains\hspace{0.2cm} $D^{n-1}_{+}$.\hspace{0.2cm} Assume that\hspace{0.2cm} $h(\widetilde{W})$\hspace{0.2cm} does not contain any point in\hspace{0.2cm} $D^{n-1}_{-}$.\hspace{0.2cm} Then\hspace{0.2cm} $h(\widetilde{W})$\hspace{0.2cm} can not contain a point from the equators\hspace{0.2cm} $S^{n-2}$\hspace{0.2cm} of\hspace{0.2cm} $\mathcal{S}^{n-1}$.\hspace{0.2cm} Because if the image\hspace{0.2cm} $h(\widetilde{W})$\hspace{0.2cm} contained a point from one of the equators\hspace{0.2cm} $S^{n-2}$\hspace{0.2cm} then the neighborhood of that point would have some points from\hspace{0.2cm} $D^{n-1}_{-}$.\hspace{0.2cm} Then in this case,\hspace{0.2cm} $h: \widetilde{W}\longrightarrow D^{n-1}_{+}$\hspace{0.2cm} would be a covering map. Hence,\hspace{0.2cm} $\widetilde{W}$\hspace{0.2cm} would be a disjoint union of open discs, which is a contradiction. In addition,\hspace{0.2cm} $h_{|}: h^{-1}(D^{n-1}_{\pm })\longrightarrow D^{n-1}_{\pm}$\hspace{0.2cm} is a finite sheeted covering space by the above lemma.
\end{proof}
In fact the above lemma implies the following corollary.

\begin{corollary}\label{cor1}
$\widetilde{W}\setminus h^{-1} (D^{n-1}_{\pm})$\hspace{0.2cm} is a nonempty $(n-2)$-dimensional manifold.
\end{corollary}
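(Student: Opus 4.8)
The plan is to rewrite $\widetilde W\setminus h^{-1}(D^{n-1}_{+}\cup D^{n-1}_{-})$ as $h^{-1}(\mathcal L_{0})$, where $\mathcal L_{0}:=\mathcal L\setminus(D^{n-1}_{+}\cup D^{n-1}_{-})$, and to read off its dimension from the local structure of $\mathcal L$ together with the fact, established above, that $h$ is a local homeomorphism. Recall that $\widetilde W$ is a closed connected $(n-1)$-manifold, since $W$ is closed and $\pi_{1}(W)\cong\mathbb Z_{2}$ is finite. Because $D^{n-1}_{\pm}$ are open in $\mathcal L$ and $h$ is continuous, $h^{-1}(D^{n-1}_{+}\cup D^{n-1}_{-})$ is open in $\widetilde W$, so $Y:=h^{-1}(\mathcal L_{0})$ is closed in $\widetilde W$ and hence compact.

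For nonemptiness, suppose $Y=\emptyset$; then $\widetilde W=h^{-1}(D^{n-1}_{+})\sqcup h^{-1}(D^{n-1}_{-})$ as a disjoint union of open sets. By the preceding lemma each restriction $h^{-1}(D^{n-1}_{\pm})\to D^{n-1}_{\pm}$ is a finite sheeted covering of an open $(n-1)$-disc; since a disc is connected and simply connected, every such covering is trivial, so $\widetilde W$ would be a finite disjoint union of open $(n-1)$-discs. As $n-1\ge 1$, an open disc is noncompact and a finite disjoint union of noncompact spaces is noncompact, contradicting compactness of the nonempty space $\widetilde W$. Hence $Y\neq\emptyset$.

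It remains to show $Y$ is an $(n-2)$-manifold. Fix $y\in Y$. Since $h$ is a local homeomorphism and $\widetilde W$ is an $(n-1)$-manifold, there is an open chart $U\cong\mathbb R^{n-1}$ around $y$ on which $h$ restricts to a homeomorphism of $U$ onto an open set $V\subseteq\mathcal L$ containing $q:=h(y)$; under this homeomorphism $U\cap Y$ corresponds to $V\cap\mathcal L_{0}=V\setminus(D^{n-1}_{+}\cup D^{n-1}_{-})$. The key observation is that $q$ must lie on the doubled equator $\Sigma:=\mathcal S^{n-1}\setminus(D^{n-1}_{+}\cup D^{n-1}_{-})$ of the top sphere $\mathcal S^{n-1}$ of $\mathcal L$ (a set consisting of two copies of $S^{n-2}$), and on no stratum of $\mathcal L$ of smaller dimension: indeed, the chart $V\cong\mathbb R^{n-1}$ provides a Euclidean $(n-1)$-dimensional neighborhood of $q$ in $\mathcal L$, and from the explicit descriptions of the flowlines around the components of $Z$ given in the tables above, $\mathcal L$ admits such a neighborhood only at points of $\mathcal S^{n-1}=D^{n-1}_{+}\cup D^{n-1}_{-}\cup\Sigma$, every lower sphere $\mathcal S^{2k-3}\cup\cdots$ being attached to the rest of $\mathcal L$ along a subset at which $\mathcal L$ is not locally homeomorphic to $\mathbb R^{n-1}$. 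Since $y\in Y$ forces $q\notin D^{n-1}_{\pm}$, we get $q\in\Sigma$, and after shrinking $V$ we may take it to be a standard chart $V\cong\mathbb R^{n-2}\times\mathbb R$ with $V\cap\Sigma=\mathbb R^{n-2}\times\{0\}$, $V\cap D^{n-1}_{+}=\mathbb R^{n-2}\times(0,\infty)$ and $V\cap D^{n-1}_{-}=\mathbb R^{n-2}\times(-\infty,0)$. Then $V\setminus(D^{n-1}_{+}\cup D^{n-1}_{-})=\mathbb R^{n-2}\times\{0\}$, so $U\cap Y\cong\mathbb R^{n-2}$. As $y\in Y$ was arbitrary, $Y$ is an $(n-2)$-manifold without boundary, and combined with the previous paragraphs it is a nonempty compact $(n-2)$-manifold.

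The main work is concentrated in the key observation of the last paragraph: one must verify, case by case using the descriptions of the leaf spaces $\mathcal L_{i}$ recorded in the tables, that $\mathcal L$ is a (possibly non-Hausdorff) $(n-1)$-manifold precisely along $D^{n-1}_{+}\cup D^{n-1}_{-}\cup\Sigma$ and fails to be locally $(n-1)$-Euclidean at every point of the lower-dimensional spheres, so that the local homeomorphism $h$ cannot send any point of $\widetilde W$ into those strata. Once this local description of $\mathcal L$ is granted, the remainder of the argument is elementary point-set topology.
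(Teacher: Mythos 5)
Your nonemptiness argument is fine (it is essentially the paper's: the two preimages $h^{-1}(D^{n-1}_{\pm})$ are disjoint open sets, each a finite covering of an open disc, so they cannot exhaust the closed manifold $\widetilde W$). The problem is the ``key observation'' on which your dimension count rests: it is false that $\mathcal L$ is locally $(n-1)$-Euclidean only along $\mathcal S^{n-1}=D^{n-1}_{+}\cup D^{n-1}_{-}\cup\Sigma$. The leaf space of the flow of a diagonalizable projective transformation with positive real eigenvalues is locally Euclidean of dimension $n-1$ at \emph{every} point, including every point of the lower spheres $\mathcal S^{n-3}\cup\cdots$: at a point $q$ of any leaf $\ell$ choose a small $(n-1)$-disc $T$ transverse to the flow; since the flow is gradient-like (ratios $|x_i/x_j|$ of coordinates with distinct eigenvalues move strictly monotonically), each nearby orbit meets $T$ exactly once, and since the quotient map $X\to\mathcal L$ is open, $T\to\mathcal L$ is a homeomorphism onto an open neighborhood of $[\ell]$. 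The non-Hausdorff ``doubling'' never destroys local charts. Concretely, for $C_2$ take a leaf with $x_0=x_1=0$, $x_2x_3\neq 0$ (a point of $\mathcal S^{n-3}$): in transversal coordinates $(\epsilon_0,\epsilon_1,\dots)$ the nearby leaves with $\epsilon_0\epsilon_1\neq 0$ are generic leaves lying in $D^{n-1}_{+}\cup D^{n-1}_{-}$, so a neighborhood of this point in $\mathcal L$ is an $(n-1)$-ball meeting both discs, and $\mathcal L\setminus(D^{n-1}_{+}\cup D^{n-1}_{-})$ is there the union of the two hyperplanes $\{\epsilon_0=0\}\cup\{\epsilon_1=0\}$, which is not an $(n-2)$-manifold along their codimension-two intersection. (Your local model is correct at points of the doubled equator $\Sigma$, and for $C_1$, where there are no lower spheres, your proof goes through; the failure is exactly at the lower strata present for $C_i$, $i\geq 2$.)

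Because of this, your conclusion that the local homeomorphism $h$ cannot send any point of $\widetilde W$ into the lower spheres is unjustified — nothing about being a local homeomorphism from an $(n-1)$-manifold obstructs it, precisely because $\mathcal L$ has $(n-1)$-dimensional charts there too. And if $h(\widetilde W)$ does meet a lower sphere, then at such a preimage point $\widetilde W\setminus h^{-1}(D^{n-1}_{\pm})$ is locally a crossing of hyperplanes, not an $(n-2)$-manifold. So the dimension half of the corollary needs an additional input, namely that $h\bigl(\widetilde W\setminus h^{-1}(D^{n-1}_{\pm})\bigr)$ lies in the two equatorial copies of $S^{n-2}$ (or some other argument controlling the image near the lower strata); your proof does not supply it, and the case-by-case verification you defer to the tables would in fact fail, since the tables only record the set-theoretic decomposition of $\mathcal L$ into families of flowlines, not its local topology. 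For comparison, the paper offers no proof at all here — it asserts the corollary as a consequence of the covering lemma for the top discs — so the gap you tried to fill is real, but the filling proposed is based on an incorrect description of $\mathcal L$.
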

We will use the following well known fact repeatedly.

\begin{lemma}\label{lemma4}
Let\hspace{0.2cm} $L$\hspace{0.2cm} be an $n$-dimensional connected and simply connected manifold and\hspace{0.2cm} $U\subset L$\hspace{0.2cm} be an open ball, where\hspace{0.2cm} $n\geq 3$.\hspace{0.2cm} Then\hspace{0.2cm} $L \setminus U$\hspace{0.2cm} is connected and simply connected.
\end{lemma}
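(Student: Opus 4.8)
The plan is to compute $\pi_1(L\setminus U)$ by Seifert--van Kampen, after enlarging the relevant sets so that they are honestly open. I would choose coordinates near $\overline U$ identifying $\overline U$ with the closed unit ball, and let $U'$ and $B$ be the concentric open balls of radii $\frac{1}{2}$ and $2$, so that $\overline{U'}\subset U\subset\overline U\subset B$. Put $A:=L\setminus\overline{U'}$. Then $A$ is open, and pushing the open annular region $U\setminus\overline{U'}\cong S^{n-1}\times(0,1)$ radially outward onto $\partial U$ exhibits a deformation retraction of $A$ onto $L\setminus U$; so it suffices to prove that $A$ is connected and simply connected. Moreover $A\cup B=L$, while the overlap $A\cap B=B\setminus\overline{U'}$ is diffeomorphic to $S^{n-1}\times(0,1)$, which is connected and simply connected because $n\ge 3$, and $B$ is contractible.

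It then remains to check that $A$ is path-connected and to invoke van Kampen. The set $A\cap B$ is nonempty and connected, so it lies in a single connected component of $A$. If $A$ had a second component $A_0$, then $A_0$ would be open in $L$; being disjoint from $A\cap B$ it would also be disjoint from the whole neighbourhood $B$ of $\overline{U'}$, which forces $A_0$ to be closed in $L$ as well, contradicting the connectedness of $L$. Hence $A$ is path-connected, and van Kampen gives
$$\pi_1(L)\ \cong\ \pi_1(A)\ast_{\pi_1(A\cap B)}\pi_1(B)\ \cong\ \pi_1(A)\ast_{1}\,1\ \cong\ \pi_1(A).$$
Since $L$ is simply connected, $\pi_1(A)=1$, and therefore $L\setminus U$ is connected and simply connected.

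The argument is mostly formal, and the step I would be most careful about is the bookkeeping in the first paragraph: replacing the closed set $L\setminus U$ by the genuinely open, homotopy-equivalent set $A$, and verifying that the overlap $A\cap B$ is indeed (diffeomorphic to) $S^{n-1}\times(0,1)$. Once that is in place, the hypothesis $n\ge 3$ is used in exactly one spot — it makes $\pi_1(S^{n-1})$ trivial, so that the amalgamated product collapses and $\pi_1(A)$ inherits triviality directly from $\pi_1(L)$.
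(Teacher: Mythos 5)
The paper itself gives no proof of this lemma (it is invoked as a ``well known fact''), so there is nothing internal to compare against; taken on its own, your van Kampen argument is the standard one and the bookkeeping is done correctly: $A=L\setminus\overline{U'}$ does deformation retract onto $L\setminus U$ once the coordinates exist, the overlap $A\cap B\cong S^{n-1}\times(0,1)$ is connected and simply connected precisely because $n\ge 3$, your open--closed argument for the connectedness of $A$ is sound (components of an open subset of a manifold are open, and none can avoid the neighbourhood $B$ of $\overline{U'}$ without being closed in $L$), and the amalgamated product then collapses to give $\pi_1(A)\cong\pi_1(L)=1$. One trivial repair: the chart is only assumed to exist near $\overline U$, so take $B$ to be the concentric ball of radius $1+\varepsilon$ rather than $2$.

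The substantive caveat is your very first step, ``choose coordinates near $\overline U$ identifying $\overline U$ with the closed unit ball.'' The lemma as stated assumes only that $U$ is an open ball, and in dimensions $n\ge 3$ --- exactly the range of the hypothesis --- the closure of an embedded open ball need not be a closed ball, and even when it is, it can be wildly embedded: the interior of the Alexander horned ball in $S^3$ is an open ball whose closure admits no such standard coordinate description and whose boundary sphere is not collared. For such $U$ neither your deformation retraction of $L\setminus\overline{U'}$ onto $L\setminus U$ nor the open cover $\{A,B\}$ exists, so what you have proved is the lemma under the implicit tameness hypothesis that $\overline U$ sits inside a chart as a round closed ball (equivalently, that $\partial U$ is bicollared). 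This is harmless for the paper: every application removes round open discs or hemispheres of spheres in the leaf space, or preimages of these under local homeomorphisms, so only the tame case is ever used. But you should either add that hypothesis to your statement or note explicitly that your argument does not cover wild balls, since the lemma as literally written is more general than what your proof establishes.
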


To proceed further, we consider the three cases of the leaf space\hspace{0.2cm} $\mathcal{L}$.

\textbf{Case 1:} Consider the immersion\hspace{0.2cm} $h: \widetilde{W} \longrightarrow \mathcal{L}_{i} = \mathcal{S}^{n-1} \cup \mathcal{S}^{n-3} \cup ... \cup \mathcal{S}^{n-1-2(i-1)}$,\hspace{0.2cm} for\hspace{0.2cm} $n+1-2i \geq 2$.

Now, we remove the top dimensional open discs namely,\hspace{0.2cm} $D^{n-1}_{+}$\hspace{0.2cm} and\hspace{0.2cm} $D^{n-1}_{-}$\hspace{0.2cm} from the leaf space\hspace{0.2cm} $\mathcal{L}$\hspace{0.2cm} and their preimages from\hspace{0.2cm} $\widetilde{W}$.\hspace{0.2cm} Then the remaining $(n-2)$-dimensional manifold\hspace{0.2cm} $\mathcal{G}^{n-2}=\widetilde{W} \setminus h^{-1}(D^{n-1}_{\pm})$\hspace{0.2cm} is a closed connected manifold by Lemma \ref{lemma4} and Corollary \ref{cor1} and the map
$$\mathcal{G}^{n-2} \longrightarrow S^{n-2} \cup S^{n-2} \cup \mathcal{S}^{n-3} \cup ... \cup \mathcal{S}^{n-1-2(i-1)}$$
is still an immersion. Next, the $(n-2)$-dimensional open discs\hspace{0.2cm} $D^{n-2}$'s\hspace{0.2cm} are removed from\hspace{0.2cm} $\mathcal{L}$\hspace{0.2cm} and their preimages from\hspace{0.2cm} $\mathcal{G}^{n-2}$\hspace{0.2cm} and we get an immersion as follows
$$\mathcal{G}^{n-3} \longrightarrow S^{n-3} \cup S^{n-3} \cup  \mathcal{S}^{n-3} \cup ... \cup \mathcal{S}^{n-1-2(i-1)}.$$
Here,\hspace{0.2cm} $\mathcal{G}^{n-3}$\hspace{0.2cm} is an $(n-3)$-dimensional manifold since the image of\hspace{0.2cm} $\mathcal{G}^{n-2}$\hspace{0.2cm} should contain points from the equators of\hspace{0.2cm} $S^{n-2}$'s.

We continue removing the top dimensional open discs from the leaf space\hspace{0.2cm} $\mathcal{L}$\hspace{0.2cm} and their preimages from the remaining part of\hspace{0.2cm} $\widetilde{W}$\hspace{0.2cm} until we get
$$\mathcal{G}^{n+1-2i} \longrightarrow \mathcal{L}^{n+1-2i}= S^{n+1-2i} \cup S^{n+1-2i} \cup ... \cup S^{n+1-2i} \cup \mathcal{S}^{n+1-2i}.$$
By Lemma \ref{lemma4},\hspace{0.2cm} $\mathcal{G}^{n+1-2i}$\hspace{0.2cm} is still connected and simply connected as long as\\ $n+1-2i\geq 2$.

\textbf{Case 2:} If the dimension of the manifold is\hspace{0.2cm} $2k$,\hspace{0.2cm} for some\hspace{0.2cm} $k \in \mathbb{Z}$\hspace{0.2cm} and\hspace{0.2cm} $i=k$\hspace{0.2cm} then  removing cells as above we finally obtain the following immersion
$$\mathcal{G}^2 \longrightarrow \mathcal{L}^2= S^2 \cup S^2 \cup ... \cup S^2 \cup \mathcal{S}^1.$$ Next, we remove small open discs containing the north and south poles of the $2$-dimensional spheres and one of the equators of\hspace{0.2cm} $\mathcal{S}^1$\hspace{0.2cm} then foliate the complement with circles.

\textbf{Case 3:} If the dimension of the manifold is\hspace{0.2cm} $2k-1$,\hspace{0.2cm} for some\hspace{0.2cm} $k \in \mathbb{Z}$\hspace{0.2cm} and\hspace{0.2cm} $i=k$\hspace{0.2cm} then the immersion analogously will be
$$\mathcal{G}^2 \longrightarrow \mathcal{L}^2= S^2 \cup S^2 \cup ... \cup S^2 \cup \mathcal{S}^2 \cup S^0 .$$ Then we remove small open discs containing the north and south poles of the $2$-dimensional spheres and\hspace{0.2cm} $S^0$\hspace{0.2cm} then foliate the complement with circles.

Note that in all cases above there are foliations on\hspace{0.2cm} $\mathcal{L}^{n+1-2i}$\hspace{0.2cm} with the spheres\hspace{0.2cm} $S^{r}$'s\hspace{0.2cm} ($r=n-2i$\hspace{0.2cm} in Case 1 and\hspace{0.2cm} $r=1$\hspace{0.2cm} in Case 2 and 3) after removing the small open discs containing the north and south poles of each $(n+1-2i)$-sphere in\hspace{0.2cm} $\mathcal{L}^{n+1-2i}$.\hspace{0.2cm} The number of the preimages of these open discs in\hspace{0.2cm} $\mathcal{G}^{n+1-2i}$\hspace{0.2cm} is finite and we remove these open discs from\hspace{0.2cm} $\mathcal{G}^{n+1-2i}$.\hspace{0.2cm} Hence, there is also a foliation on\hspace{0.2cm} $\mathcal{G}^{n+1-2i}$\hspace{0.2cm} with $r$-dimensional manifolds\hspace{0.2cm} $\mathcal{J}^{r}$.\hspace{0.2cm} These $r$-manifolds must be sphere since the sphere\hspace{0.2cm} $S^{r}$\hspace{0.2cm} is closed in the leaf space, its preimage is also closed. Furthermore, the preimage of\hspace{0.2cm} $S^{r}$\hspace{0.2cm} is compact because\hspace{0.2cm} $\mathcal{G}^{n+1-2i}$\hspace{0.2cm} is compact. Hence, the map is a covering by Lemma \ref{covering},\hspace{0.2cm}
$\mathcal{J}^{r} \longrightarrow S^{r}$.

For\hspace{0.2cm} $r =1$,\hspace{0.2cm} after removing the preimages of small open discs containing the north and south poles of each sphere in\hspace{0.2cm} $\mathcal{L}$,\hspace{0.2cm} the remaining part of\hspace{0.2cm} $\widetilde{W}$\hspace{0.2cm} is foliated by 1-dimensional manifolds. These 1-dimensional manifolds are circles since\hspace{0.2cm} $\mathcal{G}^{n+1-2i}$\hspace{0.2cm} is compact and\hspace{0.2cm} $\mathcal{J}^{1} \longrightarrow S^{1}$\hspace{0.2cm} is a covering map. Hence, the remaining part of\hspace{0.2cm} $\widetilde{W}$\hspace{0.2cm} is foliated by circles and hence it is an annulus.

By Theorem \ref{reebthurston}, for\hspace{0.2cm} $r >1$,\hspace{0.2cm} the foliation on\hspace{0.2cm} $\mathcal{G}$\hspace{0.2cm} is\hspace{0.2cm} $S^{r}\times I$,\hspace{0.2cm} where\hspace{0.2cm} $I=[-1, 1]$.\hspace{0.2cm} Hence, the leaf space of this foliation on\hspace{0.2cm} $\mathcal{G}^{n+1-2i}$\hspace{0.2cm} is\hspace{0.2cm} $I= [-1, 1]$. \label{foli}

On the other hand, the quotient space of the leaf space\hspace{0.2cm} $\mathcal{L}^{n+1-2i}$\hspace{0.2cm} is a non-Hausdorff space which is a union of intervals with one extra origin\hspace{0.2cm} $I^{*}= I_1 \cup I_2 \cup ... \cup I_{2i-1} \cup \{ 0^{\prime} \}$.\hspace{0.2cm} The involution interchanges these intervals with each other except the one, which represents the sphere\hspace{0.2cm} $\mathcal{S}^{n+1-2i}$\hspace{0.2cm} and on that sphere it changes the double origins with each other.
There is still an immersion\hspace{0.2cm} $\widetilde{h}: I \longrightarrow I^{*}$\hspace{0.2cm} induced from the immersion\hspace{0.2cm} $h$\hspace{0.2cm} such that\hspace{0.2cm} $\widetilde{h}(\pm 1)$\hspace{0.2cm} are the end points of some of the intervals in \hspace{0.2cm}$I^{*}$.\hspace{0.2cm} Such an immersion is an embedding, whose image contains one interval with only one copy of the origin. Therefore, this gives a contradiction since in this case the immersed image of\hspace{0.2cm} $\widetilde{W}$\hspace{0.2cm} in\hspace{0.2cm} $\mathcal{L}$\hspace{0.2cm} can not be invariant under involution.

Finally, we consider the case, where\hspace{0.2cm} $C_0$\hspace{0.2cm} is the matrix corresponding to\hspace{0.2cm} $A$,\hspace{0.2cm} such that\hspace{0.2cm} $A^2 =-Id$.\hspace{0.2cm} The zero set is the disjoint union of two copies of\hspace{0.2cm} $\mathbb{RP}^{k-1}$\hspace{0.2cm} in\hspace{0.2cm} $\mathbb{RP}^{2k-1}$, $Z= l_1 \cup l_2$,\hspace{0.2cm} where\hspace{0.2cm} $l_i= \mathbb{RP}^{k-1}, i=1,2.$\hspace{0.2cm} Consider the following diagram, where\hspace{0.2cm} $\pi$\hspace{0.2cm} is the universal covering map.
\begin{displaymath}
\xymatrix {
\tilde{N}=\widetilde{W}\times \mathbb{R} \ar[r]^{dev} \ar[d]^{\pi} &
\mathbb{RP}^{2k-1}  \\
N=\widetilde{W}\times S^1
}
\end{displaymath}
Then\hspace{0.2cm} $dev^{-1}(l_i)$\hspace{0.2cm} is invariant under\hspace{0.2cm} $\pi_1(N)\cong \mathbb{Z}$-action and\hspace{0.2cm} $\alpha_i$\hspace{0.2cm} is an $(k-1)$-dimensional submanifold of\hspace{0.2cm} $N$,\hspace{0.2cm} where\hspace{0.2cm} $\alpha_i =dev^{-1}(l_i)/ \mathbb{Z} =\pi(dev^{-1}(l_i)) \subseteq N$.\hspace{0.2cm} Therefore, $$\alpha_i \longrightarrow l_i$$ is a covering map. Now, we have two cases:
\begin{enumerate}
\item $\alpha_1 \cup \alpha_2= \emptyset$.\hspace{0.2cm} It means that\hspace{0.2cm} $dev(\widetilde{N})$\hspace{0.2cm} is empty. Therefore, we can use Lemma \ref{no source} and Lemma \ref{periodic} in this case also.

Note that each flowline starts at\hspace{0.2cm} $l_1$\hspace{0.2cm} and ends at\hspace{0.2cm} $l_2$.\hspace{0.2cm} We consider the boundary of a tubular neighborhood of\hspace{0.2cm} $\mathbb{RP}^{k-1}$\hspace{0.2cm} in\hspace{0.2cm} $\mathbb{RP}^{2k-1}$,\hspace{0.2cm} which is the total space of an\hspace{0.2cm} $S^{k-1}$\hspace{0.2cm} bundle over\hspace{0.2cm} $\mathbb{RP}^{k-1}$.\hspace{0.2cm} Since there exists a unique flowline passing through any point of the total space of this bundle, the leaf space\hspace{0.2cm} $\mathcal{L}$\hspace{0.2cm} is that total space. (Note that if\hspace{0.2cm} $(k-1)$\hspace{0.2cm} is even,\hspace{0.2cm} $\mathbb{RP}^{k-1}$\hspace{0.2cm} is nonorientable and thus the bundle is nontrivial.) The immersion\hspace{0.2cm} $h= \widetilde{W} \longrightarrow \mathcal{L}$,\hspace{0.2cm} induced from the developing map\hspace{0.2cm} $dev: \widetilde{M} \longrightarrow \mathbb{RP}^n$,\hspace{0.2cm} is a covering map. Now consider the diagram below.

\begin{figure}[h]
\begin{center}
\scalebox{0.5}{\includegraphics{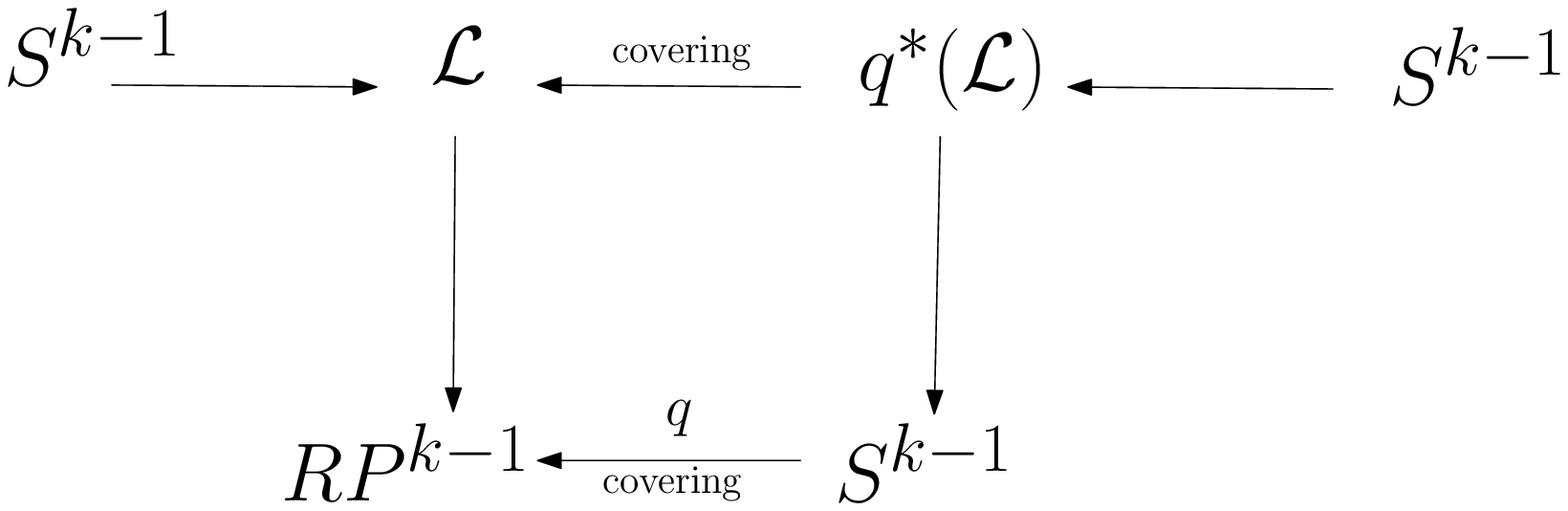}}
\caption{$q^{*}(\mathcal{L})$ is an $S^{k-1}$ bundle over $S^{k-1}$.}
\label{bundle}
\end{center}
\end{figure}
\noindent Since\hspace{0.2cm} $k \geq 3$,\hspace{0.2cm} $\pi_1(q^{*}(\mathcal{L}))=0$,\hspace{0.2cm} so\hspace{0.2cm} $\widetilde{W}$\hspace{0.2cm} and\hspace{0.2cm} $q^{*}(\mathcal{L})$\hspace{0.2cm} are two simply connected coverings of\hspace{0.2cm} $\mathcal{L}$.\hspace{0.2cm} Hence, we have a homeomorphism\hspace{0.2cm} $q^{*}(\mathcal{L})\cong \widetilde{W}$.

Therefore,\hspace{0.2cm} $\widetilde{W}$\hspace{0.2cm} is the total space of an\hspace{0.2cm} $S^{k-1}$\hspace{0.2cm} bundle over\hspace{0.2cm} $S^{k-1}$,\hspace{0.2cm} see Figure \ref{bundle}.

Since by assumption\hspace{0.2cm} $\widetilde{W}$\hspace{0.2cm} is not the total space of an\hspace{0.2cm} $S^{k-1}$\hspace{0.2cm} bundle over\hspace{0.2cm} $S^{k-1}$\hspace{0.2cm} (in the statement of Theorem ~\ref{main thm}),\hspace{0.2cm} $h$\hspace{0.2cm} is not a covering map and this is a contradiction.

\item $\alpha_1 \cup \alpha_2 \neq \emptyset$.\hspace{0.2cm} In this case, Lemma \ref{no source} does not hold and we present an alternative argument as follows:

Without loss of generality, assume that\hspace{0.2cm} $\alpha_1$\hspace{0.2cm} is nonempty. Let\hspace{0.2cm} $\phi$\hspace{0.2cm} be the closure of a flowline of\hspace{0.2cm} $v$\hspace{0.2cm} with one endpoint on\hspace{0.2cm} $\alpha_1$.\hspace{0.2cm} $\phi$\hspace{0.2cm} is a compact $1$-submanifold of\hspace{0.2cm} $N$\hspace{0.2cm} because its preimage in\hspace{0.2cm} $\widetilde{N}$\hspace{0.2cm} maps into a closed invariant interval in\hspace{0.2cm} $\mathbb{RP}^{2k-1}$\hspace{0.2cm} with one endpoint in each\hspace{0.2cm} $l_i$.\hspace{0.2cm} Hence, the other endpoint of\hspace{0.2cm} $\phi$\hspace{0.2cm} is in\hspace{0.2cm} $\alpha_2$,\hspace{0.2cm} which is also necessarily nonempty.

To show that\hspace{0.2cm} $\alpha_1$\hspace{0.2cm} is connected, take a component\hspace{0.2cm} $\gamma$\hspace{0.2cm} of\hspace{0.2cm} $\alpha_1$.\hspace{0.2cm} Let\hspace{0.2cm} $U$\hspace{0.2cm} be the tubular neighborhood of\hspace{0.2cm} $\gamma$\hspace{0.2cm} in\hspace{0.2cm} $N$.\hspace{0.2cm} $dev(\pi^{-1}(\gamma)) \subset l_1$\hspace{0.2cm} and actually they are equal. Hence,\hspace{0.2cm} $dev(\pi^{-1}(U))$\hspace{0.2cm} contains a neighborhood of\hspace{0.2cm} $l_1$.\hspace{0.2cm} Thus,\hspace{0.2cm} $U$\hspace{0.2cm} contains the total space\hspace{0.2cm} $\Upsilon$\hspace{0.2cm} of an\hspace{0.2cm} $S^{k-1}$\hspace{0.2cm} bundle over\hspace{0.2cm} $\mathbb{RP}^{k-1}$\hspace{0.2cm} transverse to the flow and bounds a small neighborhood of\hspace{0.2cm} $\gamma$.\hspace{0.2cm} Since\hspace{0.2cm} $U$\hspace{0.2cm} is preserved by the flow it follows that\hspace{0.2cm} $U= \Upsilon\times \mathbb{R}$.\hspace{0.2cm} The boundary of\hspace{0.2cm} $U$\hspace{0.2cm} in\hspace{0.2cm} $N$\hspace{0.2cm} is contained in\hspace{0.2cm} $\alpha_1 \cup \alpha_2$.\hspace{0.2cm} Therefore,\hspace{0.2cm} $\alpha_1$ and $\alpha_2$\hspace{0.2cm} are both connected and\hspace{0.2cm} $N= \alpha_1 \cup U \cup \alpha_2$\hspace{0.2cm} (this argument is analogous to the one in \cite{MR3485333}, p.8).

Since\hspace{0.2cm} $\alpha_i \longrightarrow l_i$,\hspace{0.2cm} for\hspace{0.2cm} $i=1,2$\hspace{0.2cm} is a covering map, there are two possibilities for\hspace{0.2cm} $\alpha_i$,\hspace{0.2cm} which are\hspace{0.2cm} $S^{k-1}$\hspace{0.2cm} and\hspace{0.2cm} $\mathbb{RP}^{k-1}$.

If\hspace{0.2cm} $\alpha_i =S^{k-1}$\hspace{0.2cm} then the boundary of the neighborhood of\hspace{0.2cm} $\alpha_i$\hspace{0.2cm} is\hspace{0.2cm} $\partial \nu(\alpha_i) =S^{k-1}\widetilde{\times} S^{k-1}$,\hspace{0.2cm} which is the total space of a sphere bundle over a sphere. Since\hspace{0.2cm} $k \geq 3$,\hspace{0.2cm} the homotopy exact sequence implies that\hspace{0.2cm} $\pi_1( \nu(\alpha_i))$\hspace{0.2cm} is trivial.\hspace{0.2cm} $N$\hspace{0.2cm} can be written as\hspace{0.2cm} $N= \nu(\alpha_1) \cup \nu(\alpha_2)$,\hspace{0.2cm} where the two neighborhoods are glued along their boundaries via a diffeomorphism. Finally, by Van Kampen's theorem $$\pi_1(N)\cong \pi_1(\nu(\alpha_1))\ast \pi_1(\nu(\alpha_2))/ L,$$
where\hspace{0.2cm} $L$\hspace{0.2cm} is the normal subgroup corresponding to the kernel of the homomorphism $$\Phi : \pi_1(\nu(\alpha_1))\ast \pi_1(\nu(\alpha_2)) \longrightarrow \pi_1(N).$$ However, this gives a contradiction because\hspace{0.2cm} $\pi_1(N) \cong \mathbb{Z}$.

If\hspace{0.2cm} $\alpha_i= \mathbb{RP}^{k-1}$\hspace{0.2cm} then\hspace{0.2cm} $\partial \nu(\alpha_i) =S^{k-1}\widetilde{\times} \mathbb{RP}^{k-1}$.\hspace{0.2cm} Similarly,
$$\pi_1(N)\cong \pi_1(\nu(\alpha_1))\ast \pi_1(\nu(\alpha_2))/ L$$
and\hspace{0.2cm} $\pi_1(\nu(\alpha_i)) \cong \mathbb{Z}_2$.\hspace{0.2cm} Therefore, we get
$$\mathbb{Z}\cong \mathbb{Z}_2 \ast \mathbb{Z}_2 / L.$$
However, this is not possible since it is a well known fact that\hspace{0.2cm} $\mathbb{Z}_2 \ast \mathbb{Z}_2$\hspace{0.2cm} has no normal subgroup whose quotient is equal to\hspace{0.2cm} $\mathbb{Z}$.\hspace{0.2cm} Therefore, this gives a contradiction.
\end{enumerate}
This finishes the proof.
\end{proof}

\section{An Obstruction To The Existence Of Real Projective Structures}

In this section, we will give an obstruction to obtain examples of manifolds with the infinite fundamental group\hspace{0.2cm} $\mathbb{Z}$\hspace{0.2cm} admitting no real projective structure.

General properties of Pontryagin classes give the following theorem.
\begin{theorem}\label{smale}
If there is an immersion\hspace{0.2cm} $M^{n-1}\longrightarrow \mathbb{R}^n$,\hspace{0.2cm} where\hspace{0.2cm} $M$\hspace{0.2cm} is an orientable manifold then the Pontryagin classes\hspace{0.2cm} $p_i (M^{n-1})$\hspace{0.2cm} are all two torsion, for\hspace{0.2cm} $i\geq 1$.
\end{theorem}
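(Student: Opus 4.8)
The plan is to use the fact that a codimension‑one immersion makes $TM$ stably equivalent to a line bundle, together with the observation that line bundles carry no Pontryagin information in positive degrees. Concretely, let $f\colon M^{n-1}\to\mathbb{R}^n$ be the given immersion. Its differential realizes $TM$ as a subbundle of the pullback $f^{*}T\mathbb{R}^n$, which is the trivial rank‑$n$ bundle $\varepsilon^n$ over $M$; choosing a Riemannian metric on $M$ splits the quotient off, yielding an isomorphism $TM\oplus\nu\cong\varepsilon^n$ in which $\nu$, the normal bundle of $f$, is a real line bundle.

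Next I would invoke two standard facts about Pontryagin classes. First, for real vector bundles $E$ and $E'$ the total Pontryagin class obeys the Whitney sum formula $p(E\oplus E')=p(E)\smile p(E')$ only modulo $2$‑torsion in $H^{*}(M;\mathbb{Z})$ (see, e.g., Milnor and Stasheff, \emph{Characteristic Classes}); this is precisely the source of the word ``$2$‑torsion'' in the statement. Second, a real line bundle $\lambda$ has $p_i(\lambda)=0$ for every $i\geq1$: indeed $p_i(\lambda)=(-1)^{i}c_{2i}(\lambda\otimes_{\mathbb{R}}\mathbb{C})$, and $\lambda\otimes_{\mathbb{R}}\mathbb{C}$ is a complex line bundle, so all of its Chern classes in degrees above $2$ vanish; in particular $p(\nu)=1$.

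Putting these together, $p(TM\oplus\nu)=p(\varepsilon^n)=1$, while modulo $2$‑torsion $p(TM\oplus\nu)=p(TM)\smile p(\nu)=p(TM)$, the last equality because $p(\nu)=1$. Hence $p(TM)=1$ modulo $2$‑torsion, i.e.\ each $p_i(M^{n-1})=p_i(TM)$ with $i\geq1$ is killed by $2$, which is the claim. (Orientability is not strictly needed for this. If one does use it, then $w_1(\nu)=w_1(TM)$ since $w_1(\varepsilon^n)=0$, so $w_1(\nu)=0$, $\nu$ is trivial, $TM$ is stably trivial, and the \emph{exact} Whitney formula for Chern classes upgrades the conclusion to $p_i(M^{n-1})=0$; but the stated form is all that is required.) There is no substantial obstacle: the argument is bookkeeping with characteristic classes. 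The one point demanding care is to apply the Pontryagin Whitney formula in its correct, mod‑$2$‑torsion, form, which is why the statement is phrased with ``$2$‑torsion'' rather than ``zero''.
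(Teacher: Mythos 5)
Your proof is correct; the paper states this theorem without proof, appealing only to ``general properties of Pontryagin classes,'' and the argument you give --- splitting $f^{*}T\mathbb{R}^{n}\cong TM\oplus\nu$ with $\nu$ the normal line bundle, noting $p(\nu)=1$, and applying the Whitney formula for Pontryagin classes, which holds modulo $2$-torsion --- is precisely the standard argument being invoked. Your parenthetical observation is also accurate: orientability gives $w_{1}(\nu)=w_{1}(TM)=0$, so $\nu$ is trivial, $TM$ is stably trivial, and the exact Whitney formula for the Chern classes of the complexification yields the stronger conclusion $p_{i}(M^{n-1})=0$ for $i\geq 1$, of which the stated $2$-torsion claim is a weakening.
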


\begin{theorem}\label{thm43}
Let\hspace{0.2cm} $M^n$\hspace{0.2cm} be a simply connected manifold which does not admit any immersion into\hspace{0.2cm} $\mathbb{R}^{n+1}$.\hspace{0.2cm} Then\hspace{0.2cm} $M \times S^1$\hspace{0.2cm} does not have any real projective structure.
\end{theorem}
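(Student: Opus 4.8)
The plan is to argue by contradiction: assuming that $M\times S^1$ admits a real projective structure, I will construct an immersion of $M$ into $\mathbb{R}^{n+1}$, contradicting the hypothesis. The argument is short and uses none of the holonomy analysis of the previous sections; the only inputs are that a real projective structure supplies a developing map which is an immersion (page~\pageref{devmap}) and that $M$ is simply connected.

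First, since $\pi_1(M)=1$, the universal cover of $M\times S^1$ is $M\times\mathbb{R}$, with covering projection $(m,t)\mapsto(m,e^{2\pi i t})$. A real projective structure on the $(n+1)$-manifold $M\times S^1$ then gives a developing map $dev\colon M\times\mathbb{R}\longrightarrow\mathbb{RP}^{n+1}$, which is an immersion. Restricting $dev$ to the embedded slice $M\times\{0\}$ produces an immersion $g\colon M\longrightarrow\mathbb{RP}^{n+1}$.

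Next, I would lift $g$ through the (universal) double cover $S^{n+1}\to\mathbb{RP}^{n+1}$. As $M$ is simply connected, the lifting criterion for covering spaces applies and yields a map $f\colon M\longrightarrow S^{n+1}$ lying over $g$, and $f$ is again an immersion because $S^{n+1}\to\mathbb{RP}^{n+1}$ is a local diffeomorphism. Now $\dim M=n<n+1=\dim S^{n+1}$, so $f$ is nowhere a submersion; hence every point of $M$ is a critical point of $f$ and, by Sard's theorem, $f(M)$ has measure zero in $S^{n+1}$, in particular it is a proper subset. Choosing $q\in S^{n+1}\setminus f(M)$, stereographic projection from $q$ is a diffeomorphism $S^{n+1}\setminus\{q\}\xrightarrow{\ \cong\ }\mathbb{R}^{n+1}$, so its composition with $f$ is an immersion $M\longrightarrow\mathbb{R}^{n+1}$. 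This contradicts the assumption that $M$ admits no immersion into $\mathbb{R}^{n+1}$, and finishes the proof.

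The only step needing a moment's thought --- and the reason that passing to $S^{n+1}$ is essential rather than cosmetic --- is that an immersion of $M$ into $\mathbb{RP}^{n+1}$ does not by itself yield one into $\mathbb{R}^{n+1}$: the complement of a point, or even of a projective hyperplane, in $\mathbb{RP}^{n+1}$ is not diffeomorphic to Euclidean space. Lifting to $S^{n+1}$ repairs this, since there the complement of a single point is $\mathbb{R}^{n+1}$, and the dimension count guarantees that the (measure-zero) image misses such a point. I expect no genuine obstacle; the entire content of the theorem is the observation that simple connectivity of $M$ lets one both split the $S^1$-direction off the universal cover and promote the developing image from $\mathbb{RP}^{n+1}$ to $S^{n+1}$.
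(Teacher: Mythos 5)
Your proof is correct and follows essentially the same route as the paper's: restrict the developing map of $M\times S^1$ to a slice $M\times\{0\}$, lift the resulting immersion to $S^{n+1}$ via the double cover (using simple connectedness), and project from a point not in the image to obtain an immersion $M\to\mathbb{R}^{n+1}$, contradicting the hypothesis; your Sard/dimension-count step in fact supplies the justification (existence of a missed point) that the paper leaves implicit. One aside in your commentary is inaccurate, though it does not affect the argument: the complement of a projective hyperplane in $\mathbb{RP}^{n+1}$ \emph{is} diffeomorphic to $\mathbb{R}^{n+1}$ (an affine chart); the genuine reason for passing to $S^{n+1}$ is that nothing guarantees the measure-zero image misses an entire hyperplane, whereas missing a single point of the sphere does follow from Sard's theorem.
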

\begin{proof}
Assume that\hspace{0.2cm} $M \times S^1$\hspace{0.2cm} admits a real projective structure. Then there exists a developing map such that
\begin{displaymath}
\xymatrix{
M \times \mathbb{R} \ar[d]  \ar[r]^{dev} & \mathbb{RP}^{n+1} \\
M \times S^1 }
\end{displaymath}

Consider the following diagram.
\begin{displaymath}
\xymatrix{
 & S^{n+1} \ar[d] \\
M^{n} \ar[ur] \ar[r]^{dev} & \mathbb{RP}^{n+1}  }
\end{displaymath}

Since the map\hspace{0.2cm} $M \longrightarrow \mathbb{RP}^{n+1}$\hspace{0.2cm} is an immersion and the double cover\\ $S^{n+1} \longrightarrow \mathbb{RP}^{n+1}$\hspace{0.2cm} is a local diffeomorphism,\hspace{0.2cm} $M \longrightarrow S^{n+1}$\hspace{0.2cm} is also an immersion. Moreover,
$$M \longrightarrow S^{n+1} \setminus \{p\} =\mathbb{R}^{n+1}$$ is an immersion where\hspace{0.2cm} $p$\hspace{0.2cm} is a point in\hspace{0.2cm} $S^{n+1}$,\hspace{0.2cm} which is not in the image of\hspace{0.2cm} $M$.
However, this yields a contradiction.
\end{proof}

\textbf{Example:} Let\hspace{0.2cm} $M = \mathbb{CP}^2$.\hspace{0.2cm} The first Pontryagin class of\hspace{0.2cm} $\mathbb{CP}^2$\hspace{0.2cm} is\hspace{0.2cm} $p_1 =c_{1}^2 -2c_2$,\hspace{0.2cm} where\hspace{0.2cm} $c_i$'s\hspace{0.2cm} are Chern classes, for\hspace{0.2cm} $i=1, 2$.\hspace{0.2cm} Then
$$p_1= c_{1}^2 -2c_2 =9-2.3 =3.$$
Hence\hspace{0.2cm} $p_1$\hspace{0.2cm} is not a torsion class.\hspace{0.2cm} By Theorem \ref{smale}, there is no immersion\hspace{0.2cm} $\mathbb{CP}^2 \longrightarrow \mathbb{R}^5$\hspace{0.2cm} and it contradicts to the existence of the developing map. Therefore,\hspace{0.2cm} $\mathbb{CP}^2 \times S^1$\hspace{0.2cm} does not have a real projective structure.

\begin{theorem}
Assume that\hspace{0.2cm} $W^{n-1}$\hspace{0.2cm} and\hspace{0.2cm} $M$\hspace{0.2cm} as in Theorem \ref{main thm}. Assume further that the universal cover\hspace{0.2cm} $\widetilde{W}$\hspace{0.2cm} of\hspace{0.2cm} $W$\hspace{0.2cm} does not admit an immersion into\hspace{0.2cm} $\mathbb{R}^n$.\hspace{0.2cm} Then\hspace{0.2cm} $M$\hspace{0.2cm} has no real projective structure.
\end{theorem}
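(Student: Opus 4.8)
The plan is to imitate the proof of Theorem~\ref{thm43}, exploiting the fact that the universal cover of $M$ is a genuine product. Suppose, toward a contradiction, that $M$ admits a real projective structure, with developing pair $(dev, hol)$ and
$$dev\: \widetilde{M} \longrightarrow \mathbb{RP}^n$$
an immersion (as recalled in the Preliminaries). By the construction of $M$ in Section~\ref{themaintheorem}, the universal cover splits as $\widetilde{M} = \widetilde{W} \times \mathbb{R}$, so restricting $dev$ to the slice $\widetilde{W} \times \{0\}$ produces an immersion $f\: \widetilde{W}^{\,n-1} \longrightarrow \mathbb{RP}^n$ of the closed $(n-1)$-manifold $\widetilde{W}$ into $\mathbb{RP}^n$. (Here $\widetilde{W}$ is compact, being the universal---hence double---cover of the closed manifold $W$.)

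Next I would lift $f$ through the double cover $q\: S^n \to \mathbb{RP}^n$. Since $\widetilde{W}$ is simply connected and $n \geq 4$, the image $f_*(\pi_1(\widetilde{W}))$ is trivial, so the lifting criterion yields a map $\widetilde{f}\: \widetilde{W} \to S^n$ with $q \circ \widetilde{f} = f$; and $\widetilde{f}$ is again an immersion because $q$ is a local diffeomorphism. As $\dim \widetilde{W} = n-1 < n$, the image $\widetilde{f}(\widetilde{W})$ has $n$-dimensional measure zero in $S^n$ (and is compact, hence closed), so there is a point $p \in S^n \setminus \widetilde{f}(\widetilde{W})$. Composing $\widetilde{f}$ with the standard diffeomorphism $S^n \setminus \{p\} \cong \mathbb{R}^n$ gives an immersion $\widetilde{W} \to \mathbb{R}^n$, contradicting the hypothesis that $\widetilde{W}$ admits no immersion into $\mathbb{R}^n$. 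Hence $M$ carries no real projective structure.

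There is no substantial obstacle in this argument; the only points requiring a little care are the observation that a slice $\widetilde{W} \times \{0\}$ of the immersion $dev$ is still immersed (immediate, since the restriction of an immersion to a submanifold is an immersion) and the compactness of $\widetilde{W}$, which guarantees that $\widetilde{f}(\widetilde{W})$ is a proper subset of $S^n$ and so omits a point. Unlike the proof of Theorem~\ref{main thm}, this argument uses neither the injectivity and structure of the holonomy nor the leaf-space analysis: it follows directly from the immersivity of the developing map together with the product structure $\widetilde{M} = \widetilde{W} \times \mathbb{R}$, so that the conclusion of Theorem~\ref{thm43} applies essentially verbatim with $\widetilde{W}$ playing the role of the simply connected factor.
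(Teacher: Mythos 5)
Your proposal is correct and follows essentially the same route as the paper: the paper's (very terse) proof likewise restricts the developing map $dev\colon \widetilde{W}\times\mathbb{R}\to\mathbb{RP}^n$ to the slice $\widetilde{W}$ and invokes the argument of Theorem~\ref{thm43} (lift through $S^n\to\mathbb{RP}^n$ using simple connectedness, omit a point by dimension/compactness, land in $\mathbb{R}^n$) to contradict the hypothesis. Your write-up merely makes explicit the details the paper leaves implicit.
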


\begin{proof}
Assume on the contrary that\hspace{0.2cm} $M$\hspace{0.2cm} has a real projective structure. Then the universal cover\hspace{0.2cm} $\widetilde{W}\times \mathbb{R}$\hspace{0.2cm} of\hspace{0.2cm} $M$\hspace{0.2cm} has a real projective structure and thus the developing map\hspace{0.2cm} $dev: \widetilde{W}\times \mathbb{R} \longrightarrow \mathbb{RP}^n$\hspace{0.2cm} provides an immersion of\hspace{0.2cm} $\widetilde{W}$\hspace{0.2cm} into\hspace{0.2cm} $\mathbb{R}^n$.\hspace{0.2cm} This finishes the proof.
\end{proof}

\begin{remark}
Note that since\hspace{0.2cm} $S^{n-1}$\hspace{0.2cm} has an immersion into\hspace{0.2cm} $\mathbb{R}^n$,\hspace{0.2cm} the above theorem does not imply that\hspace{0.2cm} $\mathbb{RP}^n \# \mathbb{RP}^n$\hspace{0.2cm} can not have a real projective structure.
\end{remark}

\begin{appendix}
\section{\\Choosing An Appropriate $P$ Depending On The Matrix $A$}
In this section, we continue choosing an appropriate\hspace{0.2cm} $P$\hspace{0.2cm} for\hspace{0.2cm} $A$\hspace{0.2cm} and calculate trace$(Q)$ to say that the determinant of the Jacobian matrix at some points is nonzero by considering the following composition:
$$\mathbb{R}^2  \longrightarrow  GL(n+1, \mathbb{R}) \longrightarrow  SL(n+1, \mathbb{R})  \longrightarrow  \mathbb{R}^2 $$
given by
$$(x, y) \longmapsto P \longmapsto  f(P)=A P A P^{-1} \longmapsto g(Q)= (\text{trace}(Q), \text{trace}(Q^2)).$$

\textbf{Case 2:} $A$\hspace{0.2cm} has two\hspace{0.2cm} $-1$\hspace{0.2cm} eigenvalues. Then we choose\hspace{0.2cm} $P$\hspace{0.2cm} as follows:

$\bullet$ If\hspace{0.2cm} $t$\hspace{0.2cm} is odd,

set\hspace{0.2cm} $k=(t-1)/2$\hspace{0.2cm} and\hspace{0.2cm} $a_{k1}=y$.\hspace{0.2cm} If\hspace{0.2cm} $k\neq (t-1)/2$,\hspace{0.2cm} let
\begin{displaymath}
a_{k1} = \left\{ \begin{array}{ll}
1, & \textrm{ $k$ is odd},\\
0, & \textrm{$k$ is even},
\end{array} \right.
\end{displaymath}
$a_{t2}=x,\hspace{0.2cm} a_{t(t-1)}=y-x$,\hspace{0.2cm} $a_{tk}=0$,\hspace{0.2cm} for $3\leq k \leq t-2$,\hspace{0.2cm}
$a_{12}=y+x,\hspace{0.2cm} a_{1(t-1)}=y$.\hspace{0.2cm} If\hspace{0.2cm} $t\neq 5$,\hspace{0.2cm} take\hspace{0.2cm} $a_{1((t+3)/2)}= a_{1((t-1)/2)}=1$;\hspace{0.2cm} otherwise,\hspace{0.2cm} $a_{1k}=0$,\hspace{0.2cm} for\hspace{0.2cm} $3\leq k \leq t-2$\hspace{0.2cm} and if\hspace{0.2cm} $t=5$\hspace{0.2cm} then\hspace{0.2cm} $a_{13}=1.$\hspace{0.2cm}
When\hspace{0.2cm} $k=((t+1)/2)+1$\hspace{0.2cm} let\hspace{0.2cm} $a_{kt}=x$.\hspace{0.2cm} Otherwise, (i.e. $k\neq ((t+1)/2)+1$)
\begin{displaymath}
a_{kt} = \left\{ \begin{array}{ll}
0, & \textrm{ $k$ is odd},\\
1, & \textrm{$k$ is even},
\end{array} \right.
\end{displaymath}
and the core matrix\hspace{0.2cm} $(t-2)\times (t-2)$\hspace{0.2cm} is the identity matrix.

If\hspace{0.2cm} $A$\hspace{0.2cm} has two\hspace{0.2cm} $-1$\hspace{0.2cm} eigenvalues and\hspace{0.2cm} $t= 9$\hspace{0.2cm} then we choose \hspace{0.2cm}$P_{t\times t}$\hspace{0.2cm} as below.\label{matris4}
$$\begin{bmatrix}
    1 & y+x & 0 & 1 & 0 & 1  & 0  & y & 0 \\
    0 & 1 & 0 & 0 & 0 & 0  & 0 & 0  & 1\\
    1 & 0 & 1 & 0 & 0  & 0 & 0 & 0 & 0\\
    y & 0 & 0 & 1 & 0 & 0  & 0 & 0 & 1 \\
    1 & 0 & 0 & 0 & 1 & 0  & 0 & 0 & 0 \\
    0 & 0 & 0 & 0 & 0 & 1  & 0 & 0 & x\\
    1 & 0 & 0 & 0 & 0 & 0  & 1 & 0 & 0 \\
    0 & 0 & 0 & 0 & 0 & 0  & 0 & 1 & 1\\
    1 & x & 0 & 0 & 0 & 0  & 0 & y-x & 0
\end{bmatrix}.$$

If $(t-1)/2$ is even then
\begin{equation*}
\begin{split}
\text{trace}(Q) & = t-6-\frac{2y}{1+y+2x+y^2}-\frac{-y-x}{1+y+2x+y^2}-\frac{1+y^2}{1+y+2x+y^2}\\
& -\frac{-1-2x-y^2+yx}{1+y+2x+y^2} -\frac{y(1+x)}{1+y+2x+y^2}-\frac{-y^2}{1+y+2x+y^2}+\frac{y+2x}{1+y+2x+y^2}\\
& +\frac{1}{1+y+2x+y^2} +\frac{1+y+x+y^2}{1+y+2x+y^2}+\frac{1+y+x+yx}{1+y+2x+y^2}-\frac{-x-y^2+yx}{1+y+2x+y^2}\\
& -\frac{1+2y+2x}{1+y+2x+y^2} -\frac{x(-1+y)}{1+y+2x+y^2}+\frac{(y-x)(-1+y)}{1+y+2x+y^2},
\end{split}
\end{equation*}
where $Q= A P A P^{-1}$.

Considering the same map with the case\hspace{0.2cm} $t$\hspace{0.2cm} is even, we get the determinant of the Jacobian matrix at\hspace{0.2cm} $(2, 3)$\hspace{0.2cm} is\hspace{0.2cm} $\displaystyle -\frac{1792}{4913}$.
\vspace{0.5cm}

If\hspace{0.2cm} $(t-1)/2$\hspace{0.2cm} is odd then
\begin{equation*}
\begin{split} \text{trace}(Q) & =t-6-\frac{3y}{y^2+2y+2x}-\frac{-y-x}{y^2+2y+2x}-\frac{2y^2}{y^2+2y+2x}-\frac{y+x}{y^2+2y+2x}\\
& -\frac{-y^2+yx-y-x}{y^2+2y+2x} -\frac{xy+y+x}{y^2+2y+2x}+\frac{y^2+y+x}{y^2+2y+2x}+\frac{x}{y^2+2y+2x}\\
& + \frac{yx+2x+y}{y^2+2y+2x} -\frac{y(x-y-1)}{y^2+2y+2x} -\frac{xy}{y^2+2y+2x}+\frac{y(y-x)}{y^2+2y+2x}
\end{split}
\end{equation*}
and the determinant of the Jacobian matrix at\hspace{0.2cm} $(2, 3)$\hspace{0.2cm} is\hspace{0.2cm} $\displaystyle -\frac{768}{6859}$.

In each case the determinant of the Jacobian is nonzero and thus the image of the map\hspace{0.2cm} $f \circ g$\hspace{0.2cm} contains an open set.

\textbf{Case 3:} If\hspace{0.2cm} $A$\hspace{0.2cm} has more than two\hspace{0.2cm} $-1$\hspace{0.2cm} eigenvalues, we take\hspace{0.2cm} $P$\hspace{0.2cm} as below.

First, consider the following composition.
$$\mathbb{R}^k  \longrightarrow  GL(n+1, \mathbb{R}) \longrightarrow  SL(n+1, \mathbb{R})  \longrightarrow  \mathbb{R}^k,$$
given by
$$(x_1, x_2, ..., x_k)   \longmapsto  P  \longmapsto f(P)=A P A P^{-1}= Q  \longmapsto  g(Q),$$
where\hspace{0.2cm} $g(Q)= (\textrm{trace}(Q), \textrm{trace} (Q^2), ... , \textrm{trace}(Q^k))$\hspace{0.2cm} and\hspace{0.2cm} $k$\hspace{0.2cm} is the number of\hspace{0.2cm} $-1$\hspace{0.2cm} eigenvalues of\hspace{0.2cm} $A$.\hspace{0.2cm}
The Jacobian matrix is given by
\begin{displaymath}
\mathbf{J} =
\left[ \begin{array}{cccc}
\displaystyle \frac{\partial\ \textrm{trace}(Q)}{\partial x_1} & \displaystyle \frac{\partial\ \textrm{trace}(Q)}{\partial x_2} & ... &  \displaystyle \frac{\partial\ \textrm{trace}(Q)}{\partial x_k} \\
\displaystyle \frac{\partial\ \textrm{trace}(Q^2)}{\partial x_1} & \displaystyle \frac{\partial\ \textrm{trace}(Q^2)}{\partial x_2} & ... & \displaystyle \frac{\partial\ \textrm{trace}(Q^2)}{\partial x_k} \\
\vdots & \vdots & \vdots & \vdots \\
\displaystyle \frac{\partial\ \textrm{trace}(Q^k)}{\partial x_1} & \displaystyle \frac{\partial\ \textrm{trace}(Q^k)}{\partial x_2} & ... & \displaystyle \frac{\partial\ \textrm{trace}(Q^k)}{\partial x_k}
\end{array} \right].
\end{displaymath}

$\bullet$ If\hspace{0.2cm} $t$\hspace{0.2cm} is even,

let
$a_{12}=x_2,\quad a_{1(t/2)}=a_{1(t+2)/2}=x_3,\quad a_{1(t-1)}=x_1,\quad a_{2(t-2)}=x_3,\\ a_{(t/2)1}=x_2,\quad
 a_{((t+2)/2)1}=x_3,\quad a_{(t/2)t}=x_3,\quad a_{((t+2)/2)t}=x_1,\\ a_{(t-1)1}=1,\quad a_{t2}=x_3,\quad a_{t(t-1)}=x_2$,\hspace{0.2cm} and all the diagonal elements are\hspace{0.2cm} $1$.

According to the number of\hspace{0.2cm} $-1$\hspace{0.2cm} eigenvalues of\hspace{0.2cm} $A$,\hspace{0.2cm} we determine the number of different variables\hspace{0.2cm} $x_i \in \mathbb{R}$,\hspace{0.2cm} where\hspace{0.2cm} $3\leq i \leq k$\hspace{0.2cm} and\hspace{0.2cm} $k=t/2$. In the core matrix, on the antidiagonal there are only\hspace{0.2cm} $x_i$'s\hspace{0.2cm} (except $x_3$) as a pair, which are symmetric with respect to the diagonal. Moreover, the number of some\hspace{0.2cm} $x_i$'s\hspace{0.2cm} are more than two conforming to the dimension. In addition, other entries of\hspace{0.2cm} $P$\hspace{0.2cm} are all\hspace{0.2cm} $0$.

For example, if\hspace{0.2cm} $A$\hspace{0.2cm} has six\hspace{0.2cm} $-1$\hspace{0.2cm} eigenvalues and\hspace{0.2cm} $t=14$\hspace{0.2cm} then\hspace{0.2cm} $P$\hspace{0.2cm} is as below.\label{matris5}
\begin{displaymath}
{P} =
\left[ \begin{array}{cccccccccccccc}
1 & x_2 & 0 & 0 & 0 & 0 & x_3 & x_3 & 0 & 0 & 0 & 0 & x_1 & 0 \\
0 & 1 & 0 & 0 & 0 & 0 & 0 & 0 & 0 & 0 & 0 & x_3& x_2 & 0 \\
0 & 0 & 1 & 0 & 0 & 0 & 0 & 0 & 0 & 0 & 0 & x_2 & 0 & 0 \\
0 & 0 & 0 & 1 & 0 & 0 & 0 & 0 & 0 & 0 & x_5 & 0 & 0 & 0\\
0 & 0 & 0 & 0 & 1 & 0 & 0 & 0 & 0 & x_6 & 0 & 0 & 0 & 0\\
0 & 0 & 0 & 0 &0 & 1 & 0 & 0 & x_4 & 0 & 0 & 0 & 0 & 0\\
x_2 & 0 & 0 & 0 & 0 & 0 & 1 & x_1& 0 & 0 & 0 & 0 & 0 & x_3\\
x_3 & 0 & 0 & 0 & 0 & 0 & x_1 & 1 & 0 & 0 & 0 & 0 & 0 & x_1\\
0 & 0 & 0 & 0 & 0 & x_4 & 0 & 0 & 1 & 0 & 0 & 0 & 0 & 0 \\
0 & 0 & 0 & 0 & x_6 & 0 & 0 & 0 & 0 & 1 & 0 & 0 & 0 & 0\\
0 & 0 & 0 & x_5 & 0 & 0 & 0 & 0 & 0 & 0 & 1 & 0 & 0 & 0 \\
0 & 0 & x_2 & 0 & 0 & 0 & 0 & 0 & 0 & 0 & 0 & 1 & 0 & 0 \\
1 & x_2 & 0 & 0 & 0 & 0 & 0 & 0 & 0 & 0 & 0 & 0 & 1 & 0 \\
0 & x_3 & 0 & 0 & 0 & 0 & 0 & 0 & 0 & 0 & 0 & 0 & x_2 & 1
\end{array} \right].
\end{displaymath}
At the point \hspace{0.2cm}$(2, 3, 4, 5, 6, 7)$\hspace{0.2cm} the determinant of the Jacobian is $$\frac{3203652023}{129225403018523774123952000}.$$

$\bullet$ If\hspace{0.2cm} $t$\hspace{0.2cm} is odd,

let $a_{12}=x_2,\quad a_{1(t+1)/2}=x_3,\quad a_{1(t-1)}=x_1,\quad a_{2(t-2)}=x_3,\\ a_{((t-1)/2)1}=x_2,\quad a_{((t+1)/2)1}=1,\quad a_{((t+3)/2)1}=x_3,\quad a_{(t-1)1}=1,\\ a_{((t-1)/2)t}=x_3,\quad a_{((t+3)/2)t}=x_1,\quad a_{t2}=x_3,\quad a_{t(t-1)}=x_2$\hspace{0.2cm} and the diagonal elements are all\hspace{0.2cm} $1$.

In the core matrix, on the antidiagonal there are only\hspace{0.2cm} $x_i$'s\hspace{0.2cm} (except $x_3$) as a pair, which are symmetric with respect to the diagonal. Moreover, the number of some\hspace{0.2cm} $x_i$'s\hspace{0.2cm} are more than two conforming to the dimension. In addition, other entries of \hspace{0.2cm}$P$\hspace{0.2cm} are all\hspace{0.2cm} $0$.

For example, if\hspace{0.2cm} $A$\hspace{0.2cm} has five\hspace{0.2cm} $-1$\hspace{0.2cm} eigenvalues and\hspace{0.2cm} $t=13$\hspace{0.2cm} then\hspace{0.2cm} $P$\hspace{0.2cm} is as follows:\label{matris6}
\begin{displaymath}
{P} =
\left[ \begin{array}{ccccccccccccc}
1 & x_2 & 0 & 0 & 0 & 0 & x_3 & 0 & 0 & 0 & 0 & x_1 & 0 \\
0 & 1 & 0 & 0 & 0 & 0 & 0 & 0 & 0 & 0 & x_3& x_2 & 0 \\
0 & 0 & 1 & 0 & 0 & 0 & 0 & 0 & 0 & 0 & x_2 & 0 & 0 \\
0 & 0 & 0 & 1 & 0 & 0 & 0 & 0 & 0 & x_5 & 0 & 0 & 0\\
0 & 0 & 0 & 0 & 1 & 0 & 0 & 0 & x_4 & 0 & 0 & 0 & 0\\
x_2 & 0 & 0 & 0 & 0 & 1 & 0& x_1 & 0 & 0 & 0 & 0 & x_3\\
1 & 0 & 0 & 0 & 0 & 0 & 1 & 0 & 0 & 0 & 0 & 0 & 0\\
x_3 & 0 & 0 & 0 & 0 & x_1 & 0 & 1 & 0 & 0 & 0 & 0 & x_1\\
0 & 0 & 0 & 0 & x_4 & 0 & 0 & 0 & 1 & 0 & 0 & 0 & 0 \\
0 & 0 & 0 & x_5 & 0 & 0 & 0 & 0 & 0 & 1 & 0 & 0 & 0 \\
0 & 0 & x_2 & 0 & 0 & 0 & 0 & 0 & 0 & 0 & 1 & 0 & 0 \\
1 & x_2 & 0 & 0 & 0 & 0 & 0 & 0 & 0 & 0 & 0 & 1 & 0 \\
0 & x_3 & 0 & 0 & 0 & 0 & 0 & 0 & 0 & 0 & 0 & x_2 & 1
\end{array} \right].
\end{displaymath}
At the point\hspace{0.2cm} $(2, 3, 4, 5, 6)$\hspace{0.2cm} the determinant of the Jacobian is
$$\frac{74929536}{42961619719375}.$$

\textbf{Case 4:} If\hspace{0.2cm} $A$\hspace{0.2cm} has eigenvalues\hspace{0.2cm} $\pm i$\hspace{0.2cm} then both\hspace{0.2cm} $+i$\hspace{0.2cm} eigenspace and\hspace{0.2cm} $-i$\hspace{0.2cm} eigenspace of\hspace{0.2cm} $A$\hspace{0.2cm} are\hspace{0.2cm} $\displaystyle \frac{n+1}{2}$\hspace{0.2cm} dimensional. Now, we choose\hspace{0.2cm} $P$\hspace{0.2cm} as in Case 3 with\hspace{0.2cm} $k=\displaystyle \frac{n+1}{2}$ variables.
\vspace{0.1cm}

Note that the calculations above are done with the program Maple.
\end{appendix}

\bibliographystyle{plain} 

\bibliography{biblio}

\end{document}